\newcommand\restr[2]{{
  \left.\kern-\nulldelimiterspace 
  #1 
  \vphantom{\big|} 
  \right|_{#2} 
  }}
\newtheorem{thm}{Theorem}[section]
\newtheorem{dfn}[thm]{Definition}
\newtheorem{lemma}[thm]{Lemma}
\newtheorem{prop}[thm]{Proposition}
\newtheorem{cor}[thm]{Corollary}
\newtheorem{que}{Question}
\newtheorem{conj}[que]{Conjecture}
\newtheorem{THM}{Theorem}
\theoremstyle{remark}
\newtheorem{rem}[thm]{Remark}
\newcommand{\mb}{\mathbb}
\newcommand{\mc}{\mathcal}
\newcommand{\mf}{\mathfrak}
\newcommand{\R}{\mb R}
\newcommand{\C}{\mb C}
\newcommand{\Z}{\mb Z}
\newcommand{\Q}{\mb Q}
\newcommand{\N}{\mb N}
\newcommand{\D}{\mb D}
\newcommand{\F}{\mc F}
\newcommand{\G}{\mc G}
\DeclareMathOperator{\Aut}{Aut}
\DeclareMathOperator{\Sing}{Sing}
\DeclareMathOperator{\sing}{sing}
\DeclareMathOperator{\Ric}{Ric}
\DeclareMathOperator{\tang}{tang}
\DeclareMathOperator{\Supp}{Supp}
\DeclareMathOperator{\Red}{Red}
\numberwithin{equation}{section}
\begin{document}
\title{Numerically nonspecial varieties}

\author[Pereira]{Jorge Vitório Pereira}
\address{Jorge Vitório Pereira \\IMPA, Estrada Dona Castorina, 110, Horto\\ Rio de Janeiro,
Brasil}
\email{\href{mailto:jvp@impa.br}{jvp@impa.br}}

\author[Rousseau]{Erwan Rousseau}
\address{Erwan Rousseau \\ Institut Universitaire de France
	\& Aix Marseille Univ\\
		CNRS, Centrale Marseille, I2M\\
		Marseille\\
		France}
\email{\href{mailto:erwan.rousseau@univ-amu.fr}{erwan.rousseau@univ-amu.fr}}

\author[Touzet]{Fr\'ed\'eric Touzet}

\address{Fr\'ed\'eric Touzet \\ Univ Rennes, CNRS, IRMAR - UMR 6625, F-35000 Rennes, France.}
\email{\href{mailto:frederic.touzet@univ-rennes1.fr}{frederic.touzet@univ-rennes1.fr}}

\thanks{This work was supported by the ANR project \lq\lq FOLIAGE\rq\rq{}, ANR-16-CE40-0008  and CAPES-COFECUB Ma 932/19 project. The first author was supported by Cnpq and FAPERJ.}

\begin{abstract}
Campana introduced the class of special varieties as the varieties admitting no Bogomolov sheaves i.e.\ rank one coherent subsheaves of maximal Kodaira dimension in some exterior power of the cotangent bundle. Campana raised the question if one can replace the Kodaira dimension by the numerical dimension in this characterization. We answer partially this question showing that a projective manifold admitting a rank one coherent subsheaf of the cotangent bundle with numerical dimension 1 is not special. We also establish the analytic characterization with the non-existence of Zariski dense entire curve and the arithmetic version with non-potential density in the (split) function field setting. Finally, we conclude with a few comments for higher codimensional foliations which may provide some evidence towards a generalization of the aforementioned results.
\end{abstract}

\maketitle
\sloppy
\setcounter{tocdepth}{1}
\tableofcontents

\section{Introduction}
In \cite{Ca04}, Campana introduced the class of \emph{special} varieties as the compact K\"ahler varieties admitting no maps onto an orbifold of general type. Campana also establishes that a complex projective manifold $X$ is special if and only if it has no \emph{Bogomolov sheaf}, which are rank one coherent subsheaves of $\Omega_X^p$ having Kodaira dimension $p$  for some $p>0$. Campana conjectures that special varieties should have  properties related to Lang's conjectures on the distribution of entire curves or rational points on projective manifolds. More precisely he formulates the following conjecture.

\begin{conj}[Campana \cite{Ca04}]\label{Camconj}
 A compact K\"ahler manifold $X$ is special if and only if it contains a Zariski dense entire curve $f:\C \to X$, i.e., the image of $X$ is not contained in any proper subvariety of $X$.
Moreover, if $X$ is projective and defined over a number field $k$, then $X$ is special if and only if $X(k)$ is potentially dense, i.e., $X(k')$ is Zariski dense in $X$ for some finite extension $k' \supset k$.
\end{conj}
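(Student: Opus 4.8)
The plan is to treat the two biconditionals separately, and within each to split into the two implications; the crucial tool throughout is Campana's \emph{core fibration} $c_X \colon X \to C(X)$, the unique almost holomorphic fibration whose general orbifold base $(C(X),\Delta)$ is of general type, together with the characterization that $X$ is special exactly when $C(X)$ is a point (equivalently, when $X$ carries no Bogomolov sheaf).

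First I would address the direction \emph{not special $\Rightarrow$ degeneracy}. Suppose $X$ is not special, so that $c_X$ has positive-dimensional base of general type. Granting the orbifold form of Lang's conjecture --- that an orbifold of general type admits no Zariski-dense entire curve and has non-potentially-dense rational points --- any entire curve $f \colon \C \to X$ composes with $c_X$ to an orbifold entire curve on $(C(X),\Delta)$, which must be degenerate; hence $f$ lands, after accounting for the indeterminacy of $c_X$, in a proper subvariety, and $X$ has no Zariski-dense entire curve. The same pullback argument applied to $k'$-points gives non-potential density. This direction is therefore conditional only on the orbifold Lang--Vojta package and is, modulo that input, the more tractable half.

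The reverse direction \emph{special $\Rightarrow$ density} is where the real work lies. Here I would invoke Campana's conjectural decomposition of a special manifold as a tower of fibrations whose orbifold fibers are either rationally connected or have vanishing Kodaira dimension. For rationally connected fibers one has genuine Zariski-dense entire curves and potential density; for the $\kappa = 0$ fibers (abelian, K3, Calabi--Yau, and their orbifold analogues) one expects the same, although here only partial results are available. The argument would then propagate density upward along the tower, using that the special fibrations preserve both the existence of dense entire curves and potential density in the fibered, orbifold sense.

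The main obstacle is precisely this last direction. On one hand, the structure theorem expressing special manifolds as towers of rationally connected and $\kappa=0$ pieces is itself conjectural, resting on abundance and on the orbifold minimal model program; on the other hand, even granting it, density for $\kappa = 0$ varieties (potential density for K3 and Calabi--Yau manifolds, and the existence of Zariski-dense entire curves there) and --- especially --- the \emph{stability of density under fibrations} are deep open problems, the arithmetic half being more delicate than the analytic one because potential density is not known to behave well in families. For this reason I would not expect an unconditional proof of the full conjecture; the realistic goal, and the one pursued here, is to establish the degeneracy direction unconditionally for distinguished non-special classes --- in particular when $X$ carries a rank-one subsheaf of $\Omega_X$ of numerical dimension $1$, where the relevant pencil and the resulting degeneracy can be produced directly.
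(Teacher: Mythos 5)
This statement is labeled as a conjecture in the paper and is not proved there; the paper attributes it to Campana and only establishes partial results toward it (Theorems A, B, C), so there is no proof of record to compare your attempt against. Your write-up correctly recognizes this: you do not prove the conjecture, and no one has. Your sketch of the conditional argument for the degeneracy direction --- compose with the core map $c_X$ and invoke orbifold Lang--Vojta on the general-type orbifold base --- is the standard reduction, and your diagnosis of the obstacles in the converse direction (the conjectural tower decomposition into rationally connected and $\kappa=0$ orbifold pieces, the open status of potential density for K3 and Calabi--Yau manifolds, and the instability of density under fibrations, especially arithmetically) is accurate. You also correctly anticipate what the paper actually does: it proves the degeneracy direction unconditionally for the restricted class of manifolds carrying a rank-one subsheaf of $\Omega^1_X$ of numerical dimension $1$, via the transversely hyperbolic foliation and the induced map to a polydisc quotient, rather than via the core map and a general Lang-type input. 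In short, your proposal is an honest assessment of an open problem rather than a proof, which is the appropriate response here.
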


Recently, Wu \cite{Wu20} introduced the notion of \emph{numerical Bogomolov sheaves} as rank one coherent subsheaves $L$ of $\Omega_X^p$ having \emph{numerical} dimension $p$  for some $p>0$, and  a complex projective (or more generally compact K\" ahler) manifold $X$ is said to be \emph{numerically special} if it has no numerical Bogomolov sheaves. Campana (\cite[Remark 7.3]{Ca20}) raises the question of whether specialness is equivalent to numerical specialness. 

It is worth noticing that the existence of $L$ like above determines a distribution, namely $\mathrm{Ker}\ L$ of codimension $\geq p$, where equality holds if $p=1$, and that this distribution is actually integrable, according to a Theorem of Demailly \cite{ MR1922099}. This explain why foliations enter into the picture in the sequel (see in particular Theorem \ref*{THM:D} and Section \ref{highcodim}).

In this note, we address this problem for subsheaves of $\Omega^1_X$, proving the following result.

\begin{THM}\label{THM:A}
    Let $X$ be a compact K\"ahler  manifold admitting a rank one coherent subsheaf $L \subset \Omega^1_X$ of numerical dimension $1$. Then $X$ is not special i.e., it admits a rank one coherent subsheaf of maximal Kodaira dimension $p$ in $\Omega_X^p$ for some $p>0$.
\end{THM}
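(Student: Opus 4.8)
The plan is to translate the hypothesis on $L$ into a statement about a codimension one foliation and then feed it into the structure theory of foliations with pseudoeffective conormal bundle. Since $\nu(L)=1\geq 0$, the sheaf $L$ is pseudoeffective; let $\overline{L}\subset\Omega^1_X$ be its saturation, so that $\Omega^1_X/\overline{L}$ is torsion free of rank $n-1$ and $\overline{L}$ is the conormal sheaf of a codimension one distribution. By the theorem of Demailly recalled above this distribution is integrable, giving a foliation $\F$ with $\overline{L}=\CNF$. As $\overline{L}=L\otimes\mathcal O(E)$ for an effective divisor $E$, we have $\nu(\CNF)\geq\nu(L)=1$ and $\kappa(\CNF)\geq\kappa(L)$; in particular $\CNF$ is pseudoeffective and \emph{not} numerically trivial. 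If $\kappa(\CNF)=1$ we are already finished, because then $\CNF\subset\Omega^1_X$ is a rank one subsheaf of Kodaira dimension $1$, i.e.\ a Bogomolov sheaf with $p=1$. The entire difficulty therefore lies in the regime $\kappa(\CNF)\leq 0<1=\nu(L)$, where the numerical positivity of $L$ is not witnessed by honest sections.

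The conceptual point I would exploit is that the strict positivity $\nu(\CNF)>0$ forces a \emph{hyperbolic}, rather than Euclidean, transverse geometry, and hyperbolicity is what produces general type. I would invoke Touzet's structure theorem for codimension one foliations with pseudoeffective conormal bundle. A transversely Euclidean (parabolic) foliation carries a flat transverse metric, hence $\nu(\CNF)=0$; this is precisely the torus-like regime in which $X$ can still be special, and it is excluded here by $\nu(\CNF)\geq 1$. In the remaining, transversely hyperbolic (more generally transversely projective of hyperbolic type) alternative, the theorem asserts that, after a bimeromorphic modification of $X$ — harmless since the existence of a Bogomolov sheaf, and hence specialness, is a bimeromorphic invariant — the foliation $\F$ is the pullback under a fibration $f\colon X\to Y$ of a tautological foliation on a base orbifold $(Y,\Delta)$ whose transverse hyperbolic structure makes $K_Y+\Delta$ big, i.e.\ $(Y,\Delta)$ of general type. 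When $\nu(\CNF)=1$ the transverse positivity is one dimensional and $Y$ is a hyperbolic orbifold curve, so $\dim Y = 1$.

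Granting such a fibration onto a general type orbifold base, the Bogomolov sheaf is obtained by pulling back pluricanonical forms. Writing $p=\dim Y$, Campana's correspondence between fibrations of general type and Bogomolov sheaves turns $f$ into a rank one coherent subsheaf of $\Omega^p_X$ realizing the growth of $H^0\bigl(Y,m(K_Y+\Delta)\bigr)$, hence of Kodaira dimension $\kappa=\dim Y=p>0$; concretely, for $p=1$ this is the orbifold-corrected pullback of $\Omega^1_Y$ along $f$, whose extra positivity along the multiple fibres of $f$ lives in $\Omega^1_X$. This is exactly the rank one subsheaf of maximal Kodaira dimension demanded by the statement, so $X$ is not special.

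The main obstacle is the bridge between the numerical data we are given and the effective positivity required of a Bogomolov sheaf — precisely the gap $\kappa(\CNF)<\nu(\CNF)$ in the transcendental case. A priori the transverse hyperbolic metric only provides a closed positive current (numerical information) and the leaves need not be algebraic, so no fibration is visible. The crucial, and hardest, ingredient is the rigidity built into the structure theorem: on a compact K\"ahler manifold, pseudoeffectivity of $\CNF$ forces the transverse holonomy to be discrete (a lattice), so that the hyperbolic quotient is an algebraic orbifold and the a priori transcendental structure is in fact the pullback of a genuine fibration. It is this algebraicity that upgrades the numerical dimension $1$ of $L$ to Kodaira dimension $1$ of the orbifold-corrected sheaf. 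A secondary technical point is the treatment of a possibly mixed transverse structure, where one must isolate the hyperbolic factor by projecting onto it to extract the general type base.
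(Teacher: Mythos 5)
Your reduction to a codimension one foliation $\F$ with $\CNF$ pseudo-effective, the dichotomy $\kappa(\CNF)=1$ versus the non-abundant case, and the appeal to a structure theorem for the transversely hyperbolic regime all match the paper's skeleton. But the way you resolve the non-abundant case contains a genuine error. You claim that when $\nu(\CNF)=1$ and $\kappa(\CNF)=-\infty$ the structure theorem yields a fibration $f\colon X\to Y$ onto a general type orbifold \emph{curve} with $\F=f^*(\text{point foliation})$, the transverse holonomy being a discrete lattice in $\Aut(\D)$. This cannot happen: such a fibration would make $\F$ algebraically integrable and would put a sheaf of Kodaira dimension $1$ inside $\CNF$, i.e.\ it would place you back in the abundant case $\kappa(\CNF)=1$ that you had already disposed of. In the non-abundant case the leaves of $\F$ are dense, the holonomy representation $\pi_1\to\Aut(\D)$ has \emph{dense} (not discrete) image, and discreteness only appears for the full monodromy into $\Aut(\D)^N$ as an irreducible lattice with $N\ge 2$. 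The correct output of the structure theorem (Theorem~\ref{THM:D}) is a morphism $\Psi\colon X\to\D^N/\Gamma$ whose image has dimension $p\ge 2$, so the Bogomolov sheaf necessarily lives in $\Omega_X^p$ with $p\ge 2$, never in $\Omega^1_X$.

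This misidentification hides the actual content of the proof, which your proposal does not supply: one must pull back $p$ tautological foliations $\G_1,\dots,\G_p$ in general position, wedge local generators of their conormal bundles to get a rank one subsheaf of $\Omega_X^p$ isomorphic to $\bigotimes_j N^*_{\F_j}\otimes\mathcal O_X(\tang(\F_1,\dots,\F_p))$, prove that the sum $\sum_j L_j$ of the nef parts is big (via $\int_X\eta_1\wedge\cdots\wedge\eta_p>0$ and Boucksom's volume inequality, Lemma~\ref{L:big}), and then use the $\Q$-effectivity of the divisorial part of $\F$ together with the tangency divisor along the common invariant hypersurfaces to absorb the negative coefficients $r_{ij}$ (Proposition~\ref{P:corenew}). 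This last bookkeeping is not cosmetic: the mere existence of a generically finite map to $\D^N/\Gamma$ does not rule out specialness (Granath's rational and $K3$ surfaces arise as resolutions of quotients $\D^2/\Gamma$), so the positivity of the divisorial part is exactly what converts the numerical hypothesis into an honest Bogomolov sheaf. In addition, in the K\"ahler non-projective setting the paper must first descend $\F$ to the algebraic reduction via a branched cover and factorization of the (dense-image) monodromy, a step absent from your argument.
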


We also study the conjectural characterization of special manifolds following the above Conjecture \ref{Camconj}. Concerning the analytic characterization using entire curves, we prove the following.
\begin{THM}\label{THM:B}
Let $X$ be a compact K\"ahler  manifold admitting a rank one coherent subsheaf $L \subset \Omega^1_X$ of numerical dimension $1$. Then $X$ has no Zariski dense entire curves $f:\C \to X.$
\end{THM}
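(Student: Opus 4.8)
The plan is to route the argument through the codimension one foliation $\F$ determined by $L$, and to exploit a transverse hyperbolic (or fibered) structure forced by the numerical dimension $1$ hypothesis.

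First I would saturate $L$ inside $\Omega^1_X$ and appeal to Demailly's integrability theorem (cited in the excerpt) to produce a codimension one foliation $\F$ whose conormal sheaf $\CNF$ is the saturation of $L$; this sheaf is pseudoeffective with numerical dimension $1$. The numerically trivial case $\nu(\CNF)=0$ (transversely Euclidean) is excluded by hypothesis, so I would invoke the structure theorem of Touzet for codimension one foliations with pseudoeffective conormal bundle. After a finite \'etale cover and a birational modification, it gives two alternatives: either (i) $\F$ is the fibration induced by a morphism $g\colon X\to C$ onto a hyperbolic orbifold curve $C$, with leaves the fibres of $g$; or (ii) $\F$ is transversely hyperbolic, carrying a closed positive transverse $(1,1)$-form $\Theta$ of negative (Poincar\'e) curvature together with a holonomy representation $\rho\colon\pi_1(X^\circ)\to\PSL(2,\R)$ and a developing map to $\Hj$.

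Case (i) I would dispose of directly. An entire curve $f\colon\C\to X$ composed with $g$ gives an entire curve in the hyperbolic orbifold curve $C$; since such a curve is Brody hyperbolic, $g\circ f$ is constant, so $f$ factors through a single fibre of $g$ and cannot be Zariski dense. Descending through the finite cover and the modification — which affect neither the conclusion nor Zariski density once the indeterminacy and exceptional loci are removed — settles this alternative.

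For case (ii) I would argue as follows. The pullback $f^*\Theta$ is a nonnegative pseudo-metric on $\C$ whose Gaussian curvature is bounded above by a negative constant wherever it is positive; by the Ahlfors–Schwarz lemma no such pseudo-metric on $\C$ can be nonzero, so $f^*\Theta\equiv 0$ and $f$ is tangent to $\F$. Consequently the image of $f$ lies in the closure of a single leaf, and the remaining point is to show such a leaf is not Zariski dense. Here I would use that the developing map is constant along leaves and that the transverse hyperbolic structure carries an invariant harmonic transverse current, so that a Zariski dense leaf would produce a nonconstant holomorphic datum detecting the hyperbolic leaf space — in effect reducing back to a fibration over a hyperbolic orbifold curve and contradicting density. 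I expect this last reduction, ruling out Zariski dense leaves in the genuinely transversely hyperbolic case, to be the main obstacle, since individual leaves of a codimension one foliation may be topologically dense; the resolution should come from the rigidity of the holonomy representation $\rho$ together with the positivity encoded by $\nu(\CNF)=1$.
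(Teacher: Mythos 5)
Your first two steps coincide with the paper's: the trichotomy for $\nu(\CNF)=1$ (abundant/fibered versus non-abundant), the disposal of the fibration case via hyperbolicity of the orbifold base curve, and the use of the singular transverse metric of curvature $-1$ together with the Ahlfors--Schwarz lemma to force any entire curve to be tangent to $\F$ (this is exactly Lemma \ref{entirecurve}). The gap is in your final step, and it is a genuine one. In the non-abundant case the foliation is \emph{quasi-minimal}: all but finitely many leaves are dense for the Euclidean, hence Zariski, topology. So the statement you set out to prove --- ``such a leaf is not Zariski dense'' --- is false in general, and the reduction you sketch ``back to a fibration over a hyperbolic orbifold curve'' cannot exist: algebraic integrability is precisely what fails when $\kappa(\CNF)=-\infty$. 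No amount of rigidity of the holonomy representation will make the leaves algebraically degenerate.

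The mechanism the paper uses is different: one does not show that the leaf containing $f(\C)$ is degenerate, but that the entire curve \emph{inside} that leaf is forced to be small. The missing ingredient is Theorem \ref{THM:D}: up to a modification, there is a morphism $\Psi\colon X\to\D^N/\Gamma$ onto an image of dimension $p\geq 2$ with $\F=\Psi^*\G$ for a tautological foliation $\G$ on the irreducible polydisc quotient. An entire curve tangent to $\F$ pushes forward to an entire curve tangent to $\G$, and the leaves of the tautological foliations on $\D^N/\Gamma$ are hyperbolic (\cite{rousseautouzet15}), so $\Psi\circ f$ is constant. Hence $f(\C)$ lies in a fiber of $\Psi$, which is a proper subvariety because the image of $\Psi$ has positive dimension. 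The Zariski degeneracy comes from the fibers of $\Psi$, not from the leaves of $\F$. Without this structure theorem (or an equivalent substitute producing a nonconstant map to a space whose $\F$-leaves are hyperbolic), your argument does not close.
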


On the arithmetic side, we are not able to deal with rational points but rather we study a function field version of Campana's conjecture recently introduced in \cite{JR}. In this setting, the analogue of potential density is given by \emph{geometric specialness} as follows.

\begin{dfn} [Geometrically special varieties] A complex projective variety $X$ is \emph{geometrically-special} if, for every dense open subset $U\subset X$, there exists a smooth projective connected curve $C$, a point $c$ in $C$, a point $u$ in $U$, and a sequence of  morphisms $f_i:C\to X$   with $f_i(c) = u$ for $i=1,2,\ldots$ such that $C\times X $ is covered by the graphs $\Gamma_{f_i}\subset C\times X$ of these maps, i.e., the closure of $\cup_{i=1}^{\infty} \Gamma_{f_i}$ equals $ C\times X$.
\end{dfn}

Then the analogue of Campana's conjecture on potential density is formulated as follows.

\begin{conj}[\cite{JR}]
A complex projective variety $X$ is special if and only if it is geometrically special.
\end{conj}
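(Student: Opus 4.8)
The plan is to split the biconditional into its two implications and, in each direction, to reduce the statement about geometric specialness to the corresponding statement about Bogomolov sheaves: the reverse implication via the degeneracy mechanism already behind Theorems \ref{THM:A} and \ref{THM:B}, and the forward implication via Campana's structure theory of special manifolds through the core fibration. Concretely, I would first record the dictionary: by Campana's characterization recalled in the introduction, \emph{not special} means the existence of a Bogomolov sheaf $L\subset\Omega_X^p$ of Kodaira dimension $p$ for some $p>0$, and by Demailly \cite{MR1922099} the induced codimension-$p$ distribution $\ker L$ is a genuine foliation; \emph{geometrically special} is the algebraic (function-field) density statement in the definition above. The proof thus becomes a comparison between the positivity carried by $L$ and the existence of a covering family $f_i:C\to X$ of bounded complexity.

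\textbf{Geometrically special $\Rightarrow$ special.} I would argue by contraposition, mirroring the proof of Theorem \ref{THM:B} in the algebraic setting. Given a Bogomolov sheaf $L$ and a map $f:C\to X$, the differential dualizes to $f^*\Omega_X^1\to\Omega_C^1$, and composing with (an exterior power of) $L\hookrightarrow\Omega_X^p$ produces either a curve tangent to the foliation $\ker L$ or a nonzero twisted section yielding the tautological degree inequality $\deg f^*L\le 2g(C)-2$. Since $C$, hence $g(C)$, is fixed, this bounds $\deg f_i^*L$ uniformly along any family. The remaining dichotomy is then handled exactly as in this paper's codimension-one analysis: either infinitely many $f_i$ are tangent to $\ker L$, in which case their images lie in the proper invariant locus coming from the structure of foliations with pseudoeffective conormal bundle of numerical dimension $1$ (so the graphs cannot fill $C\times X$), or a uniform degree bound is incompatible with a bounded-degree family whose images cover $X$ while $L$ has numerical dimension $1$. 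The case I would write in full is $p=1$ with $\nu(L)=1$: there Theorem \ref{THM:A} already supplies the Bogomolov sheaf, and the argument of Theorem \ref{THM:B} transposes essentially verbatim from entire curves to morphisms from a fixed curve, giving \emph{not geometrically special}.

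\textbf{Special $\Rightarrow$ geometrically special.} Here I would invoke Campana's decomposition \cite{Ca04}: after passing to suitable orbifold bases, a special manifold is assembled, through iterated core fibrations, from rationally connected pieces and pieces with (orbifold) trivial canonical class. For the rationally connected strata one produces the required covering family of maps through a general point $u$ by deforming a free rational curve from a fixed $C$; the task is then to propagate such families compatibly up the tower so that the graphs become dense in $C\times X$, respecting the fibration structure at each K-trivial stage.

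\textbf{Main obstacle.} The serious difficulty lies precisely in the forward direction over the K-trivial building blocks: already for abelian varieties and for Calabi--Yau fibers, constructing a dense family of curves through a fixed point is the geometric shadow of potential density and is open in general. Consequently I expect only the reverse implication to be attainable unconditionally in this note, and even then only in the codimension-one regime $\nu(L)=1$ covered by Theorems \ref{THM:A}--\ref{THM:B}; the full biconditional as stated should be regarded as the (open) conjecture of \cite{JR}, for which the present approach establishes the function-field degeneracy half in that partial form.
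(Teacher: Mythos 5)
You are right, and this is the essential point: the statement you were asked to prove is not a theorem of the paper at all. It is the conjecture of \cite{JR}, quoted as motivation, and the paper proves neither implication in general. What the paper actually establishes is the partial result you correctly anticipate, Theorem \ref{THM:C}: if $X$ carries a rank one subsheaf $L\subset\Omega^1_X$ of numerical dimension $1$, then $X$ is not geometrically special. Your closing self-assessment (only the function-field degeneracy half, and only in the codimension-one, $\nu=1$ regime) matches the paper's actual contribution. One factual quibble on the forward direction: abelian varieties are in fact shown to be geometrically special in \cite{JR} (via translates and multiplication maps), so the genuinely open building blocks are rather the (orbifold) K-trivial pieces such as Calabi--Yau fibers; but your diagnosis that the forward implication is out of reach is correct.

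Where your sketch of the attainable half diverges from, and falls short of, the paper's proof of Theorem \ref{THM:C}: in the non-abundant case $\nu(N^*_{\F})=1$, $\kappa(N^*_{\F})=-\infty$, the tautological inequality for non-tangent curves bounds only $\deg f^*N^*_{\F}$, and since $N^*_{\F}$ is not big (its Kodaira dimension is $-\infty$), this gives no bound against an ample class; your claim that ``a uniform degree bound is incompatible with a bounded-degree family whose images cover $X$ while $L$ has numerical dimension $1$'' does not hold as stated --- bounded degree against a numerical-dimension-one class is perfectly compatible with covering families (e.g.\ fibers of a fibration). The paper's actual mechanism requires two inputs absent from your sketch. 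First, the structure theorem (Theorem \ref{THM:D}) factors $\F$ through a tautological foliation on a polydisc quotient $\D^N/\Gamma$, so that on a smooth model $V$ of the image one has $p$ foliations in general position whose nef parts $L_j$ sum to a \emph{big} $\Q$-divisor (Lemma \ref{L:big}); Proposition \ref{degreebound} then yields a Lang--Vojta-type bound $\deg f^*L\le\alpha(2g(C)-2)$ against this big line bundle for all curves avoiding a fixed proper subset $Z$. Second, Bend-and-Break converts a pointed covering family of bounded degree into a rational curve through a general point, which must be tangent to the foliation (Lemma \ref{entirecurve}), contradicting hyperbolicity of the leaves. Finally, the abundant case $\nu=\kappa=1$ is handled not by your dichotomy but by finiteness of orbifold morphisms to the general-type orbifold base curve (\cite{Ca05}). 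Without the bigness supplied by the polydisc-quotient structure and without Bend-and-Break, your reverse-direction argument stalls at a degree bound that excludes nothing.
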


In this setting, we prove the following.
\begin{THM}\label{THM:C}
    Let $X$ be a complex projective  manifold admitting a rank one coherent subsheaf $L \subset \Omega^1_X$ of numerical dimension $1$. Then $X$ is not geometrically special.
\end{THM}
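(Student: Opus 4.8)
The plan is to deduce the failure of geometric specialness from the existence of a dominant meromorphic map from $X$ onto a hyperbolic orbifold curve, by combining two facts: geometric specialness is inherited by the base of a dominant map, and a hyperbolic orbifold curve is not geometrically special. First I would extract from the hypothesis $\nu(L)=1$ a dominant meromorphic map $c\colon X\rato (B,\Delta)$ onto a one-dimensional orbifold of general type. This is the geometric content of Theorem \ref*{THM:A} in the present codimension-one situation: the Bogomolov sheaf it produces is a rank one subsheaf of $\Omega^1_X$ which one checks has Kodaira dimension $1$, and its Iitaka fibration is a fibration onto a curve $B$, the orbifold divisor $\Delta$ recording the multiple fibres. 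Equivalently, by the structure theory for codimension-one foliations with pseudoeffective conormal sheaf, after a modification the foliation $\ker L$ is the fibration defined by $c$ and $L$ agrees, up to the orbifold correction, with $c^{*}(K_B+\Delta)$; the equality $\nu(L)=1$ then forces $\deg(K_B+\Delta)>0$, i.e.\ $(B,\Delta)$ is hyperbolic.

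Next I would establish the functoriality statement: if $X$ is geometrically special and $c\colon X\rato (B,\Delta)$ is a dominant almost holomorphic map onto an orbifold, then $(B,\Delta)$ is geometrically special in the orbifold sense of \cite{JR}. Given a dense open $V\subset B$, I would set $U=c^{-1}(V)$ (intersected with the domain of definition of $c$), a dense open of $X$, and feed $U$ into the definition to obtain a curve $C$, a point $\gamma\in C$, a point $u\in U$, and morphisms $f_i\colon C\to X$ with $f_i(\gamma)=u$ and $\overline{\bigcup_i\Gamma_{f_i}}=C\times X$. Setting $g_i=c\circ f_i\colon C\to B$, these all pass through $v_0=c(u)\in V$ and, crucially, are orbifold morphisms to $(B,\Delta)$, because any curve factoring through $X$ meets the multiple fibres of $c$ with the prescribed multiplicities. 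Applying the proper morphism $\mathrm{id}\times c\colon C\times X\to C\times B$ to the covering identity, and using that a proper map is closed, yields $\overline{\bigcup_i\Gamma_{g_i}}=C\times B$, so $(B,\Delta)$ is geometrically special.

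Finally I would rule out geometric specialness for the hyperbolic orbifold curve $(B,\Delta)$, taking $V=B$ minus the support of $\Delta$. A covering family $g_i\colon C\to (B,\Delta)$ through a fixed point would have to contain infinitely many pairwise distinct non-constant orbifold morphisms, since finitely many graphs, each a curve, together with the single graph $C\times\{v_0\}$ of the constant map, cannot be dense in the surface $C\times B$. But $(B,\Delta)$ being of general type, the de Franchis–Severi finiteness for orbifold morphisms from the fixed curve $C$ into a hyperbolic orbifold curve bounds the number of such maps, a contradiction. Combining this with the descent step shows that $X$ is not geometrically special.

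The main obstacle is the first step: upgrading the numerical positivity $\nu(L)=1$ to a genuine fibration onto a hyperbolic orbifold curve, and arranging it on $X$ itself rather than only after a finite étale cover. Covers are delicate here because the definition of geometric specialness fixes the source curve $C$, whereas lifting the $f_i$ to a cover would replace $C$ by varying covers $C_i$; I would therefore route the argument through the orbifold quotient in order to keep a dominant map out of $X$ directly. I would also need to control the indeterminacy locus of $c$, so that $g_i=c\circ f_i$ is defined and is an honest orbifold morphism for the members of the covering family, this being the technical heart of the descent.
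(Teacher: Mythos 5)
There is a genuine gap, and it sits exactly where you flag your ``main obstacle'': the upgrade from $\nu(L)=1$ to a dominant map onto a hyperbolic orbifold curve is not merely delicate, it is impossible in general. The dichotomy for a codimension-one foliation $\F=\ker L$ with pseudo-effective conormal bundle of numerical dimension $1$ has two branches: either $\kappa(N_\F^*)=1$ (abundant), or $\kappa(N_\F^*)=-\infty$ (non-abundant). Only in the first branch is $\F$ algebraically integrable, and only there does your orbifold curve $(B,\Delta)$ exist; for that branch your argument is essentially the paper's (descent of the covering family to the orbifold base plus Campana's finiteness of orbifold morphisms from a fixed curve into a general-type orbifold curve). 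In the non-abundant branch the foliation is quasi-minimal --- all but finitely many leaves are dense --- so it admits no meromorphic first integral and no fibration onto a curve, and the Bogomolov sheaf produced by Theorem~\ref{THM:A} lives in $\Omega_X^p$ for some $p\geq 2$, not in $\Omega_X^1$. Your identification ``$L$ agrees up to orbifold correction with $c^*(K_B+\Delta)$'' simply has no candidate $c$ in this case, so the whole descent collapses.

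The paper's treatment of the non-abundant case is genuinely different and cannot be bypassed: Theorem~\ref{THM:D} produces a morphism $\Psi:X\to \D^N/\Gamma$ with image of dimension $p\geq 2$ realizing $\F$ as the pull-back of a tautological foliation; Proposition~\ref{degreebound} then gives a Lang--Vojta-type bound $\deg f^*L\leq \alpha(2g(C)-2)$ for a big line bundle $L$ and all curves not contained in a fixed proper subvariety (using the algebraic tautological inequality and the fact that algebraic curves cannot be tangent to the tautological foliations); finally, a covering family of bounded degree through a fixed point yields, by Bend-and-Break, a rational curve through a general point, which would have to be tangent to $\F$ and hence contained in a hyperbolic leaf --- a contradiction (Corollary~\ref{geospecial}). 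To repair your proposal you would need to replace the target $(B,\Delta)$ by the higher-dimensional general-type model $V$ of $\mathrm{Im}\,\Psi$ and replace de Franchis--Severi finiteness by the degree-bound-plus-Bend-and-Break mechanism; as written, your proof covers only the abundant half of the statement.
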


One of the main ingredient in the proof of the previous results is the following statement of independent interest which adapts to compact K\"ahler manifolds previous work of the third author on projective manifolds \cite{MR3644247}.

\begin{THM}\label{THM:D}
	Let $(X,\F)$ a foliated K\"ahler manifold such that $\F$ is a
    holomorphic codimension one transversely hyperbolic foliation with quotient singularities.
    Assume  that $\F$ is not  algebraically integrable.
	Then, up to replacing $X$ by a non singular K\"ahler modification,
    there exists a morphism  $\Psi: X\to \D^N/\Gamma$ whose image has dimension $p\geq 2$ such that $\F=\Psi^*\G$ where $\G$ is one of the tautological  foliation on $ \D^N/\Gamma$. Moreover, the hyperbolic transverse structure of $\F$ agrees with the one obtained from pull-back.
\end{THM}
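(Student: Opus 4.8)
The plan is to reduce the statement to a factorization theorem for the holonomy representation of the transverse structure, and then to invoke, in the compact Kähler setting, the Corlette–Simpson classification of rank two representations, exactly as in the projective situation treated in \cite{MR3644247}. Note that, contrary to the strategy of \cite{MR3644247} where the transverse hyperbolic structure must first be produced from a positivity/Monge--Amp\`ere argument on $\CNF$, here this structure is given, so one may enter the argument directly at the level of the holonomy datum.

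First I would encode the transverse hyperbolic structure. After resolving the quotient singularities and recording them as an orbifold structure, so that $\F$ becomes a smooth transversely hyperbolic foliation on a Kähler orbifold, the transverse hyperbolic metric produces a representation $\rho:\pi_1^{\mathrm{orb}}(X)\to\PSL(2,\R)=\Isom^+(\D)$ together with a $\rho$-equivariant holomorphic developing map $\dev:\widetilde X\to\D$ from the universal orbifold cover, whose level sets are the leaves of the pulled-back foliation. Since $\dev$ is holomorphic from a Kähler manifold into the Poincaré disk, it is harmonic, so the apparatus of non-abelian Hodge theory applies to $\rho$ without appealing to an abstract existence theorem for harmonic maps. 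Unless the image is elementary (fixing a point of $\overline{\D}$ or preserving a geodesic, cases that I would treat separately and that force either a transversely Euclidean reduction or algebraic integrability), the representation is reductive and its complexification is Zariski dense in $\SL(2,\C)$.

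Next I would run the dichotomy. By the Corlette–Simpson structure theorem for Zariski dense rank two representations of fundamental groups of compact Kähler manifolds, either $\rho$ projectively factors through an orbicurve, or $\rho$ is of polydisk (Shimura) type. In the first case the developing map factors through the induced holomorphic fibration $g:X\to\mathcal C$ onto a one dimensional orbifold, so the leaves of $\F$ are the fibers of $g$ and $\F$ is algebraically integrable, contradicting the hypothesis. Hence we are in the second case: there is a lattice $\Gamma$ in $\Aut(\D^N)$ and a holomorphic map $\Psi:X\to\D^N/\Gamma$ such that $\rho$ is the composite of $\Psi_*$ with the projection to one of the $\PSL(2,\R)$ factors. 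Replacing $X$ by a smooth Kähler model to absorb the orbifold resolution gives the morphism $\Psi$ of the statement.

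Finally I would identify the foliation and the transverse structure. Comparing developing maps, the lift $\widetilde\Psi:\widetilde X\to\D^N$ followed by the relevant coordinate projection $\D^N\to\D$ agrees with $\dev$ up to an element of $\PSL(2,\R)$; since the tautological foliation $\G$ on $\D^N/\Gamma$ is precisely the one whose leaves are the fibers of that projection, we obtain $\F=\Psi^*\G$, and the pullback of the Poincaré metric of the corresponding factor recovers the given transverse hyperbolic metric of $\F$. The bound $p\geq 2$ on the dimension of $\Psi(X)$ follows because $p=1$ would make $\Psi$ factor through an orbicurve, returning us to the excluded algebraically integrable case. I expect the main obstacle to be the passage from the projective to the compact Kähler category: one must ensure that the inputs of the Corlette–Simpson method — pluriharmonicity, the Hodge-theoretic control of the Higgs field, the construction of the period map to the polydisk Shimura stack, and the harmonic map to a Bruhat–Tits building controlling the non-archimedean places in the non-discrete case — are all available for compact Kähler $X$, and to handle with care the orbifold fundamental group attached to the quotient singularities.
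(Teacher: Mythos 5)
Your overall frame --- encode the transverse structure as a holonomy representation into $\Aut(\D)$ with a holomorphic equivariant developing map, then classify that representation --- is the right one, and in the projective case it is essentially what \cite{MR3644247} does. The gap is at your pivotal step: the ``Corlette--Simpson structure theorem for Zariski dense rank two representations of fundamental groups of compact K\"ahler manifolds'' is not an available black box. That dichotomy (projective factorization through an orbicurve versus polydisk Shimura type) is established for \emph{quasi-projective} varieties, and its proof uses projectivity well beyond the existence of a pluriharmonic map: the spectral covers, the factorization theorems for the associated Higgs fields, and the non-archimedean analysis via harmonic maps to buildings all live in the algebraic category. You flag this yourself as ``the main obstacle'' and leave it as an expectation, but it is precisely the whole content of the K\"ahler case of the theorem, so as written the argument is circular at its central step. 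A secondary issue: the holonomy is a priori defined only on the complement of the polar divisor of the transverse structure, so even granting a K\"ahler Corlette--Simpson you would need its quasi-K\"ahler (orbifold) version.

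The paper's proof avoids this entirely and is worth comparing. It first kills the torsion in the local monodromies by a finite Galois branched cover supplied by Selberg's lemma, resolves singularities to obtain a compact K\"ahler $Y$ carrying a genuine representation $\rho_Y:\pi_1(Y)\to\Aut(\D)$ with dense image (density coming, as you note, from harmonicity of the holomorphic developing map together with Corlette and Labourie). It then applies the K\"ahler version of Zuo's factorization theorem \cite{MR1384908, MR3314830} to factor $\rho_Y$ through a fibration onto a \emph{projective} manifold of general type; since the target is projective, this fibration factors through the algebraic reduction, and Lemmas \ref{L: Pullbackbehavior} and \ref{L:foldescent} are used to descend the foliation and its transverse hyperbolic structure to the projective manifold $\Red(X)$. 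Only there is the classification of rank two representations invoked, namely as the already-established projective case of the theorem. To salvage your route you would have to either actually prove a compact K\"ahler Corlette--Simpson statement, or interpose the algebraic reduction as the paper does so that the classification is only ever applied over a projective base.
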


Towards a generalization of the preceding results to foliations with higher codimensions, we prove the following statement.

\begin{THM}\label{THM:E}
	Let $X$ be a  compact K\" ahler manifold and $\F$ be a smooth foliation of codimension $p$. If $c_1 (N_\F^*)$ is
	represented by a semi-positive $(1,1)$-form $\eta$ of constant rank $=p$, then $X$ is not special.
\end{THM}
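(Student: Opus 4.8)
The plan is to produce a Bogomolov sheaf from the data of a smooth codimension $p$ foliation whose conormal bundle has first Chern class represented by a semi-positive $(1,1)$-form $\eta$ of constant rank $p$. My first move is to exploit the fact that $\eta$ has constant rank equal to the codimension: the kernel of $\eta$ is then a smooth real distribution of corank $2p$, and I would check that it is integrable and defines the real foliation underlying a structure in which the leaves of $\F$ locally fiber over a $p$-dimensional base carrying a closed semi-positive form of full rank. Concretely, since $\F$ is smooth of codimension $p$, the conormal sheaf $N_\F^*$ is a rank $p$ subbundle of $\Omega_X^1$, and taking its top exterior power yields a line subbundle $\det N_\F^* = \Lambda^p N_\F^* \subset \Omega_X^p$. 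The class $c_1(\det N_\F^*) = c_1(N_\F^*)$ is exactly the class represented by $\eta$, so the candidate Bogomolov sheaf is $L := \det N_\F^* \subset \Omega_X^p$, and the whole problem reduces to showing that $L$ has Kodaira dimension $p$.

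The key step is therefore to upgrade the semi-positivity and constant-rank hypotheses on $\eta$ into a statement about the Kodaira (not merely numerical) dimension of $L = \det N_\F^*$. Here I would invoke the Frobenius integrability of $\F$ together with the rank-$p$ condition on $\eta$ to argue that $\F$ admits, at least locally and after the foliated structure is analyzed, a transverse holomorphic structure in which the transverse volume form is controlled by $\eta$. The strategy parallels the passage used in Theorem \ref{THM:D}: a transversely homogeneous or transversely flat structure on $\F$ lets one build, up to a modification, a map $\Psi$ to a quotient $\D^N/\Gamma$ or more simply to a base of the transverse fibration, and the pullback of an ample (or big) class on that base realizes the required sections. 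The constant rank $=p$ of $\eta$ should guarantee that the image of this classifying map is genuinely $p$-dimensional, which is precisely what forces the Kodaira dimension of $L$ up to its maximal value $p$ rather than leaving it merely $\geq 1$.

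Granting that $L$ has Kodaira dimension $p>0$, the conclusion is immediate: by Campana's characterization of special varieties recalled in the introduction, the existence of a rank one coherent subsheaf $L \subset \Omega_X^p$ with $\kappa(L)=p$ exhibits a Bogomolov sheaf, hence $X$ is not special. I would also remark that smoothness of $\F$ is a genuine simplification compared with Theorem \ref{THM:D}, since it avoids the quotient-singularity analysis and makes $N_\F^*$ an honest subbundle, so that $\det N_\F^*$ is a genuine line bundle and the numerical-to-Kodaira upgrade is the only substantive point.

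The main obstacle I anticipate is exactly this upgrade from numerical positivity to effective positivity, i.e.\ from $\kappa$-generic statements to the production of actual holomorphic sections of $L^{\otimes m}$ with $h^0$ growing like $m^p$. Semi-positivity of $\eta$ of constant rank gives a nef class whose numerical dimension is $p$, but nef and $\nu = p$ does not formally imply big or $\kappa = p$ without further input. The resolution should come from the transverse geometry: the constant-rank assumption rigidifies $\eta$ enough that, after pulling back along the fibration induced by $\ker\eta$, the class becomes the pullback of a Kähler (hence big) class on a $p$-dimensional quotient, at which point $\kappa(L)=p$ follows. Making the descent of $\eta$ to the quotient precise, and verifying that no positivity is lost in the modification, is where the careful work will lie.
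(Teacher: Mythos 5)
Your reduction to showing $\kappa(\det N_\F^*)=p$ for $L=\det N_\F^*\subset\Omega_X^p$ is the right target, and your final paragraph correctly isolates the crux: nef with numerical dimension $p$ does not yield bigness for free. But your proposed resolution --- ``after pulling back along the fibration induced by $\ker\eta$, the class becomes the pullback of a K\"ahler class on a $p$-dimensional quotient'' --- presupposes that the leaves of $\F$ are closed and that a $p$-dimensional space of leaves exists. That is only the easy half of the problem. A transversely K\"ahler foliation can perfectly well have dense leaves (the codimension-one analogue in this very paper, the quasi-minimal non-abundant case, is exactly of this type), and then there is no quotient, no fibration, and no base carrying an ample class: your argument produces nothing. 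The paper's proof splits on precisely this dichotomy. When the leaves are closed it descends $\eta$ to the orbifold $X/\F$ and applies the orbifold Kodaira embedding theorem, much as you suggest. When they are not, it takes an entirely different route: it first shows $\eta$ is basic (via Demailly's theorem on holomorphic $p$-forms twisted by a line bundle with pseudo-effective dual --- a step you gesture at but never ground, and without which $\ker\eta=T_\F$ and the whole ``transverse geometry'' are not yet available), upgrades $\eta$ to a transverse K\"ahler--Einstein metric of negative transverse Ricci curvature via El Kacimi's foliated Calabi theorem, invokes Molino's theory of Riemannian foliations to obtain the commuting sheaf, proves that its structural Lie algebra $\mathfrak g$ is semi-simple because of the negativity, and thereby gets a representation $\pi_1(X)\to\Aut(\mathfrak g)$ with dense image in a real semi-simple group, to which Zuo's theorem applies and yields a fibration of an \'etale cover onto a variety of general type. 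None of this is in your outline, and it is the substantive content of the theorem.

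A secondary but real issue is your appeal to the mechanism of Theorem \ref{THM:D}: the classifying map to $\D^N/\Gamma$ there rests on structure theorems specific to codimension-one transversely \emph{hyperbolic} foliations, and the paper states explicitly in Section \ref{highcodim} that no analogous structure theory is available in codimension $p>1$ --- which is exactly why Theorem \ref{THM:E} carries the strong constant-rank hypothesis and argues through the transverse K\"ahler--Einstein metric and $\pi_1$ instead. So the ``transversely homogeneous structure plus classifying map'' step you lean on is not something you can borrow; it would have to be built from scratch, and in the dense-leaf case it is replaced by a representation-theoretic argument rather than a geometric quotient.
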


The paper is organized as follows: in Section \ref{THF} we collect some preliminary definitions and properties of transversely hyperbolic foliations. In Section \ref{S:tautological} we state the main properties of the conormal bundle of tautological foliations on irreducible polydisc quotients. In Section \ref{numspecial}, we prove Theorem \ref{THM:D} and derive Theorem \ref{THM:A} from it. In Section \ref{ec}, we prove Theorem \ref{THM:B} on entire curves. In Section \ref{geospecialness}, we prove Theorem \ref{THM:C} on non-potential density in the (split) function field setting. Finally in Section \ref{highcodim}, we prove Theorem \ref{THM:E}.

\section{Transversely hyperbolic foliations}\label{THF}

In this section, we collect useful information about transversely hyperbolic foliations
on complex manifolds. We follow the terminology of \cite[Sections 3 and 5]{BPRT}.

\subsection{Transversely hyperbolic foliations}\label{SS:THF}
Let $\F$ be a codimension one foliation on a complex manifold $X$. The foliation $\F$ is transversely hyperbolic
if the sheaf of holomorphic first integrals $\mathcal O_{X/\F}$ admits a locally constant subsheaf of sets $\mathcal I$ (called the sheaf of distinguished first
integrals) such that
\begin{enumerate}
	\item every $f \in \mathcal I$ is non-constant and has image contained in the unit disk $\D$;
    \item for every non-empty, connected, and  simply-connected open subset $U$, $\mathcal I(U)$ is non-empty and equal to $\Aut(\D) \cdot f$ for any $f \in \mathcal I(U)$;
    \item if $f \in \mathcal I(U)$, $g \in \mathcal I(V)$, and $U\cap V$ is a connected open set then
    there exists $\varphi \in \Aut(\D)$ such that $\varphi \circ f = g$.
\end{enumerate}

The pull-back of the Poincar\'e metric on the unit disk by any local distinguished first integral $f \in \mathcal I$
is a closed semipositive smooth $(1,1)$-form
\[
    \eta=f^*\left( \frac{i}{\pi} \frac{du\wedge d \overline{u}}{(1-|u|^2)^2}\right)= -\frac{i}{\pi} \partial\bar{\partial} \left( \log (1-{ |f|}^2)\right)
\]
that does not depend on the choice of $f$.
If $\omega$ is a local generator of $N^*_{\F}$ then the $(1,1)$ form
\[
    \eta = \frac{i}{\pi} \exp(  2 \psi)  \omega \wedge \overline \omega
\]
defines by duality a (singular) metric on the conormal bundle $N_\F^*$
with a plurisubharmonic continuous local weight $\psi=-\log (1-{ |f|}^2) +\log( |g|)$ where $g$ is a holomorphic function such that $df=g\omega$.
Therefore, the curvature form of the induced metric on $N^*_{\F}$ is  (in the sense of currents)
\begin{equation}\label{E:curvature equation}
    T= \frac{i}{\pi} \partial \overline {\partial} \psi =   \sum_{D\in \mathcal P} m_D [D] +\eta,
\end{equation}
the (locally finite) sum being taken over the set $\mathcal P$ of prime divisors. Here, $m_D\ge 0$ denotes the ramification order of the local distinguished first integrals along $D$. In other words, a distinguished first integral at a neighborhood of a general point of $D$ is of the
form $z^{1+m_D}$ where $\{z=0\}$ is a suitable local defining equation of $D$. In particular, $T=\eta$ if, and only if, the \emph{developing map} $\tilde{\varphi}: \tilde X\to \D$ is a submersion in codimension one.
The divisor $\sum_{D\in \mathcal P} m_D D$ is the \emph{ramification divisor} of the transversely hyperbolic foliation $\F$. Note in particular that $D$ is $\F$-invariant whenever $m_D >0$. Note also that $T$ is a closed positive $(1,1)$-current, whence the

\begin{prop}
    If $X$ is a complex compact manifold and $\F$ is a transversely hyperbolic foliation on $X$ then
    $c_1(N^*_{\F})$ is pseudo-effective.
\end{prop}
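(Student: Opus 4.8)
The plan is to produce an explicit singular Hermitian metric on $N^*_\F$ whose curvature current is positive, and then to invoke the standard characterization that a line bundle on a compact complex manifold is pseudo-effective precisely when its first Chern class is represented by a closed positive $(1,1)$-current. The required metric is exactly the one built in the paragraphs preceding the statement, so the argument reduces to verifying that the local data assemble into a genuine global object and to reading off positivity from \eqref{E:curvature equation}.

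First I would confirm that the local weights $\psi = -\log(1-|f|^2) + \log|g|$, defined relative to a local generator $\omega$ of $N^*_\F$ via $df = g\omega$, patch into a well-defined singular metric on $N^*_\F$. Two checks are involved. Changing the generator $\omega \mapsto u\omega$ with $u \in \mathcal O_X^*$ forces $g \mapsto g/u$, so $\psi \mapsto \psi - \log|u|$, which is precisely the transformation law of a metric weight relative to the transition cocycle of the conormal bundle; equivalently, it is the condition making the form $\eta = \frac{i}{\pi}\exp(2\psi)\,\omega\wedge\overline\omega$ independent of the choice of frame. Replacing the distinguished first integral $f$ by $\tilde f = \varphi \circ f$ with $\varphi \in \Aut(\D)$, as permitted on overlaps by property $(3)$ of the transversely hyperbolic structure, leaves $\eta$ unchanged because the Poincaré metric is $\Aut(\D)$-invariant; this is the independence of $f$ already recorded above. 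Together these show that $\eta$ glues to a global singular Hermitian metric on $N^*_\F$ with continuous plurisubharmonic local weights $\psi$.

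Next I would identify the curvature current of this metric with the current $T$ of \eqref{E:curvature equation}. By definition the curvature is $\frac{i}{\pi}\partial\overline\partial\psi$ in the sense of currents; away from the ramification divisor $\psi$ is smooth and yields the semipositive form $\eta$, while the logarithmic behaviour of $\psi$ along each $D \in \mathcal P$, where a distinguished first integral has the local shape $z^{1+m_D}$, contributes a Lelong--Poincaré term $m_D[D]$ with $m_D \geq 0$. This reproduces $T = \sum_{D\in\mathcal P} m_D[D] + \eta$. Since each $[D]$ is the positive current of integration over $D$, the coefficients $m_D$ are nonnegative, and $\eta \geq 0$ as a pullback of the Poincaré form, the current $T$ is a closed positive $(1,1)$-current. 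As $T$ represents $c_1(N^*_\F)$, the conormal bundle is pseudo-effective; note that this conclusion uses only the existence of a positive curvature current and so does not require $X$ to be Kähler.

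The one step I expect to demand the most care is the global patching of the weights, namely confirming that the $\Aut(\D)$-ambiguity in the choice of distinguished first integral is compatible with the conormal transition cocycle, so that the $\psi$ genuinely define a metric on $N^*_\F$ rather than merely local forms. Everything else is a direct consequence of the curvature computation already carried out, together with the elementary observation that a logarithmic singularity of a metric weight contributes a positive Lelong term.
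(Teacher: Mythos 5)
Your argument is correct and is essentially the paper's own proof: the proposition is stated there as an immediate consequence of the preceding construction of the singular metric with weight $\psi=-\log(1-|f|^2)+\log|g|$ and of the curvature identity $T=\sum_D m_D[D]+\eta$, the positivity of $T$ being the whole point. You merely make explicit the gluing checks (frame change and $\Aut(\D)$-ambiguity) that the paper leaves implicit.
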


\subsection{Pull-back of transversely  hyperbolic structure}
Let $f:X\to Y$ a holomorphic map between complex manifold and let $\F$  be a transversely hyperbolic foliation on $Y$. One can define the pull-back foliation $f^*\F$ provided that the image of the differential $df$ is not tangent to $\F$.  In this case, if $\F$ carries a transverse hyperbolic structure with sheaf of distinguished first integrals $\mathcal I$ then $f^*\F$ carries a transverse hyperbolic structure defined by the sheaf of distinguished first integrals  $f^*\mathcal I$.

\subsection{Transversely hyperbolic foliations with quotient singularities}\label{SS:quotientsing}
We will say that a codimension one foliation $\F$ is a transversely hyperbolic foliation with poles
if there exists a hypersurface $H$ such that $\restr{\F}{X-H}$ is a transversely hyperbolic
foliation as defined in Subsection \ref{SS:THF}.

According to \cite[Corollary 5.3]{BPRT}, any transversely hyperbolic foliation defined on $X-H$ where $H$ is an hypersurface, extends through $H$ as a foliation. Moreover, \cite[Theorem 5.2]{BPRT} describes the degeneracies of the transverse hyperbolic structure along $H$.

In this work, we are interested in the following subclass of the class of transversely hyperbolic foliations with poles.

\begin{dfn}
    A transversely hyperbolic foliation with quotient singularities on a complex compact manifold $X$ consists
    of a reduced  divisor $H = \sum { H_i}$ (the divisor of poles of the transverse structure) 
    and a transversely hyperbolic foliation $\F$
    on $X-H$ such that for any $x \in H$, there exists a neighborhood $U_x\subset X$ of $x$, such that the
    local monodromy of the transverse hyperbolic structure $\pi_1(U_x-H) \to \Aut(\D)$ is non-trivial and has
    finite image.
\end{dfn}

Let $U_x$ be as above. Every finite subgroup of $\Aut(\D)$ is cyclic and generated by an elliptic transformation. Therefore, there is no loss of generality in assuming that the image of the local monodromy representation takes values in $S^1$. Let $f$ be the corresponding multivalued distinguished first integral. Then $f^n: U_x-H\to \D$ is well defined and extends through $H$ by boundedness as a holomorphic function $g:U_x\to \D$. This implies that, on $U_x$, $f$ takes the form
\begin{equation}\label{E: firstintegral}
    f=f_x f_1^{ \nu_1} \cdots  f_r^{\nu_r}
\end{equation}
where the $\nu_1, \ldots, \nu_r$ are positive  non integral rational numbers,  $f_1 \cdots f_r=0$ is a local defining equation of $H$ and $f_x$ is a holomorphic function. One can moreover assume, up to adding a non negative integer exponents to the $\nu_i$'s that $f_x$ is not identically zero on each branch $H_i\cap U_x$. Remark also that $H$ is necessarily $\F$-invariant.	

The local expression for $f$ makes clear that the pull-back of the Poincaré metric on $\D$ by any multivalued distinguished first
integral for $\restr{\F}{U_x-H}$ extends through $H$  as the closed positive current
\[
    \eta_x= -\frac{i}{\pi} \partial\bar{\partial} \left( \log (1-{ |f|}^2)\right)
\]
If one consider the well (locally) defined logarithmic form

$\xi=\dfrac{df}{f}= \sum_i\nu_i \dfrac{df_i}{f_i} + \dfrac{df_x}{f_x}$, one can rewrite
\[
\eta_x= \frac{i}{\pi} \left(\dfrac{{ |f|}^2} {  { (1-{ |f|}^2)}^2}\right)\xi\wedge \bar{\xi}
\]

Set $g=f_x\prod_i f_i$. The holomorphic form $\omega= g\xi$  has no zeroes in codimension one (see proof  of Proposition \ref*{P:divhyper} ), hence is is a local generator of $N^*_{\F}$ in a neighborhood of $x$. One can then readily check that

 \[\eta_x = \frac{i}{\pi} \exp(  2 \psi)  \omega \wedge \overline \omega
\]
with
$$\psi= -\log (1-{ |f|}^2) +\sum_{i=1}^r ( \nu_i-1) \log( |f_i|) +\sum_{j=1}^s(m_j-1)\log( |f_ { x,j})$$ and where $f_x=\prod_{j=1}^{s} f_ { x,j}^{m_j}$ is the writing of $f_x$ as a product of irreducible factors.
By construction these local $(1,1)$-forms glue together with that defined on  $X-H$ by (\ref{E:curvature equation}) and then give rise by duality to a global singular metric  $N^*_{\F}$ with local weight $\psi$. Observe that  $\eta$, considered as a closed positive current, has  as a $(1,1)$
continuous plurisubharmonic potential of the form $\varphi=  -\log (1-{ |f|}^2)$ so that its Lelong numbers  $\nu(\eta,x)=0$ at any $x\in X$. When $X$ is compact, Demailly's approximation Theorem \cite[Theorem 4.1]{MR1158622} implies that $\eta$ represents a $\emph{nef}$ class in $H_{\partial\bar{\partial}}^{1,1}(X,\R)$.
Observe also that $\eta$ is nothing but the unique  positive current giving no mass to $H$ and extending the semipositive form $\eta_{|X-H}$. In particular, $\eta$ coincides with its absolute continuous part with respect to the Lebesgue measure.

Let us explicitize the curvature current $T$ of the singular metric defined by $\eta$ on $N_\F^*$ in restriction to $U_x$. A straightforward calculation yields:
\[
    T_{ |U_x}=\frac{i}{\pi} \partial \overline \partial \psi = \eta + \sum_{i=1}^r (\nu_{i} - 1 ) [H_i]_{ | U_x}+ [D_x]\, ,
\]
where $D_x$ is an integral effective divisor whose support lies in the polar locus of the logarithmic derivative $\frac{df_x}{f_x}$. More precisely, if $D=D_1+\ldots+D_s$ is the divisor of poles of $\frac{df_x}{f_x}$ with corresponding residues  $m_1,\ldots,m_s$ then $D_x=\sum_{i=1}^s (m_i-1) D_i$.

The discussion above is summarized in the following result.

\begin{prop}\label{P:conormal}\label{P:c1dec}
  Let $X$ be a complex  manifold and let $\F$ be a  transversely hyperbolic foliation with quotient singularities on $X$.
  Let $H$ be the divisor of poles of the transverse structure.
  Consider the (singular) metric on $N_\F^*$ defined by $\eta$, the trivial extension through $H$ of the pull-back of the Poincar\'e metric by local distinguished first integrals. Let $T$ its curvature current (in particular $T$ represents  $c_1(N^*_{\F})\in H_{\partial\bar{\partial}}^{1,1} (X,\R) )$, then
  \begin{equation}\label{E:summation}
        T =\sum_{ D\in\mathcal P} r_D [D] +\eta
  \end{equation}
   where $D$ ranges over the set $\mathcal P$ of prime divisors,   $r_D \in \mathbb \Q_{>-1}$ and the sum is locally finite.    Moreover,
  \begin{enumerate}
    \item the current $\eta$   is a smooth semi-positive $(1,1)$-form in restriction to $X-H$ and when $X$ is compact, represents a nef class in $H_{\partial\bar{\partial}}^{1,1}(X,\R)$; and
    \item if $r_D\not=0$, $\F$  admits, at a general point of $D$, a distinguished (maybe multivalued) first integral of the form $z^{r_D +1}$ where $z=0$ is a local defining equation of $D$; and
    \item the set $\{D\in\mathcal P \arrowvert r_D\notin\Z\}$ coincides with the set $\{H_i, i\in I\}$.
  \end{enumerate}
\end{prop}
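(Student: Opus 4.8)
The plan is to assemble the local curvature computations carried out just above into a single global current and then read off its coefficients divisor by divisor. I would split the analysis into the open set $X-H$, where $\F$ is genuinely transversely hyperbolic in the sense of Subsection \ref{SS:THF}, and neighborhoods $U_x$ of the points $x\in H$, where the transverse structure degenerates. The two regimes are then matched on overlaps and glued.

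On $X-H$ the curvature formula (\ref{E:curvature equation}) applies verbatim and already gives $T=\sum_D m_D[D]+\eta$ with $m_D\in\Z_{\geq 0}$ the ramification orders, the distinguished first integral along such a $D$ being $z^{1+m_D}$. Near a point $x\in H$ I would instead start from the normal form (\ref{E: firstintegral}), $f=f_xf_1^{\nu_1}\cdots f_r^{\nu_r}$. After checking that $\omega=g\xi$ with $g=f_x\prod_i f_i$ is an honest local generator of $N_\F^*$ having no codimension-one zeros (this is where Proposition \ref{P:divhyper} is used), the identity $\eta_x=\frac{i}{\pi}\exp(2\psi)\,\omega\wedge\overline\omega$ pins down the local weight $\psi$, and a direct $\partial\overline\partial$ computation yields $T_{|U_x}=\eta+\sum_{i=1}^r(\nu_i-1)[H_i]+[D_x]$ with $D_x=\sum_j(m_j-1)D_j$ supported on the zero divisor of $f_x$.

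The two descriptions must then be glued. On the overlap $U_x\cap(X-H)$ one has $f=f_x\cdot(\text{unit})$ away from $H$, so the zeros of $f_x$ are exactly the ramification divisor and the exponent $m_j-1$ matches the ramification order $m_D$; since the corresponding local weights differ by pluriharmonic functions (the transition cocycle of $N_\F^*$), the forms $\eta_x$ patch to a global closed positive current $\eta$ and $T=\frac{i}{\pi}\partial\overline\partial\psi$ is globally well defined and represents $c_1(N_\F^*)$. Reading off the coefficient $r_D$ of each prime divisor gives $r_{H_i}=\nu_i-1\in\Q_{>-1}$, which is non-integral because $\nu_i$ is a positive non-integral rational, while for every other prime divisor $r_D=m_D\in\Z_{\geq 0}$; here I use that $f_x$ vanishes identically along no branch $H_i$, so these divisors are disjoint from $H$, and that the ramification divisor and the poles of $df_x/f_x$ are locally finite. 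This establishes the decomposition (\ref{E:summation}) together with property (3), that the non-integral coefficients are exactly the components of $H$, and property (2), since in both regimes the first integral has the normal form $z^{r_D+1}$ (namely $z^{\nu_i}$ along $H_i$ and $z^{1+m_D}$ elsewhere).

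The step I expect to be the real content is property (1), the nefness of $\eta$ when $X$ is compact; semipositivity and smoothness on $X-H$ are immediate since there $\eta$ is the pullback of the Poincar\'e metric by a submersive local first integral. For nefness I would note that $\eta$ carries the continuous plurisubharmonic local potential $\varphi=-\log(1-|f|^2)$, so its Lelong numbers vanish identically, $\nu(\eta,x)=0$ for every $x\in X$. Demailly's regularization theorem \cite[Theorem 4.1]{MR1158622} then approximates $\eta$ by smooth forms in the same class whose loss of positivity is governed by these Lelong numbers, forcing the class of $\eta$ to be nef in $H^{1,1}_{\partial\overline\partial}(X,\R)$. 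This analytic input is the one genuinely nontrivial ingredient; everything else reduces to the bookkeeping of the two local models already in hand.
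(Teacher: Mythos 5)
Your proposal is correct and follows essentially the same route as the paper, which presents this proposition explicitly as a summary of the preceding discussion: the curvature formula (\ref{E:curvature equation}) on $X-H$, the local normal form (\ref{E: firstintegral}) and the resulting weight $\psi$ near points of $H$, the gluing into a global singular metric on $N_\F^*$, and the vanishing of the Lelong numbers of $\eta$ combined with Demailly's approximation theorem for nefness. No substantive difference.
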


\begin{dfn}\label{D:divisorial part}
	The divisor  $\sum_{ D\in\mathcal P} r_D D$ will be called the \textbf{divisorial part} of $\F$
    (with respect to the given transversely hyperbolic structure).
\end{dfn}

\begin{rem}
	The decomposition presented in Equation (\ref{E:summation}) is compatible with restriction to open subsets $U$
    (where the tranverse hyperbolic structure of $\restr{\F}{U}$ is given by restriction of the sheaf $\mathcal I$). In particular,
    the divisorial part of the restricted transverse structure is just $\sum_{ D\in\mathcal P} r_D  \restr{D}{U}$.	
\end{rem}

\subsection{Divisorial part along invariant hypersurfaces}
Let $\F$ be a transversely hyperbolic foliation with quotient singularities on a complex manifold $X$.
Let $H=\sum H_i$ be its divisor of poles. Let us denote by $\mathcal I_{d \log}$ the sheaf defined on $X-H$ by the collections of logarithmic differential $df/f$ where $f\in \mathcal I$, see also \cite[D\'efinition 5.3]{MR3124741}.

\begin{prop}\label{P:divhyper}
	Let $K$ a hypersurface of $X$. Assume  there exist a neighborhood $U$ of $K$
    and a section of $\mathcal I_{d \log}$ on $U-K$ which extends through $K$ as a logarithmic one form $\omega$ such that $K\subset { (\omega)}_\infty$.
	The following assertions hold true.
	\begin{enumerate}
		\item\label{I:divhyper1} The irreducible components of the hypersurface $K$ are $\F$-invariant.
		\item\label{I:divhyper2} The germ of $\omega$ along $K$ is unique.
		\item\label{I:divhyper3} If $D$ is a prime divisor of $U$, then the residue  $\lambda_D$ of $\omega$ along $D$ belongs to $\Q_{\geq 0}$.
		\item\label{I:divhyper4} Up to shrinking $U$, $\omega$ has no zeroes in codimension one and
        the divisorial part of $\F_{|U}$ is $\sum_{ D\in\mathcal P} r_D D$
        where $r_D= 0$ if $\lambda_D=0$, and $r_D= \lambda_D- 1$ otherwise.
\end{enumerate}
\end{prop}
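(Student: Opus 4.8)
The engine behind all four assertions is the local normal form of a distinguished first integral along an invariant divisor, which the quotient-singularity hypothesis makes completely explicit. The plan is to first record this normal form and then read off each assertion from it, rather than treating the four items separately. Since $\omega$ extends a section of $\mathcal{I}_{d\log}$, on $U-K$ it equals $d\log f = df/f$ for a multivalued distinguished first integral $f$ with image in $\D$; in particular $\omega$ is closed and defines $\F$ there, and it continues to define $\F$ on all of $U$ after clearing poles. For assertion \eqref{I:divhyper1} I would argue that $K\subset(\omega)_\infty$ forces every component $K_0$ of $K$ to be a genuine polar component of the closed logarithmic form $\omega$, so near a general point of $K_0=\{z=0\}$ one has $\omega=\lambda_{K_0}\tfrac{dz}{z}+\beta$ with $\beta$ holomorphic and residue $\lambda_{K_0}\neq 0$; restricting the holomorphic defining form $z\,\omega=\lambda_{K_0}\,dz+z\beta$ to $\{z=0\}$ leaves only $\lambda_{K_0}\,dz$, whose kernel is the tangent space to $\{z=0\}$, so $K_0$ is $\F$-invariant. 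For assertion \eqref{I:divhyper2}, after shrinking $U$ so that $U-K$ is connected, any second extension $\omega'=d\log f'$ satisfies $f'=\varphi\circ f$ for a single $\varphi\in\Aut(\D)$; the pole along $K$ forces the bounded integrals $f,f'$ both to tend to $0$ along $K$, whence $\varphi(0)=0$, so $\varphi$ is a rotation and $d\log f'=d\log f$. Thus the germ of $\omega$ along $K$ is unique.

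For assertion \eqref{I:divhyper3} I would fix a prime divisor $D\subset U$ with local equation $z=0$. The quotient-singularity hypothesis gives finite local monodromy, say of order $n$, so $f^{n}$ is single-valued; being bounded it extends holomorphically across $D$ by Riemann's theorem and equals $z^{k}\cdot(\text{unit})$ with $k\in\Z_{\geq 0}$, the non-negativity coming precisely from the fact that $f$, hence $f^{n}$, has no poles. Therefore $f=z^{\lambda_D}\cdot(\text{unit})$ with $\lambda_D=k/n\in\Q_{\geq 0}$, and the residue of $\omega=d\log f$ along $D$ is exactly this $\lambda_D$. This is the very exponent appearing in the local expression \eqref{E: firstintegral}: one gets $\lambda_D=\nu_i$ along $H_i$ and $\lambda_D$ a positive integer along a component of $\{f_x=0\}$.

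For assertion \eqref{I:divhyper4} I would write $\omega=\sum_a\lambda_{D_a}\tfrac{dh_a}{h_a}+\beta$ with the $h_a$ reduced equations of the polar components and $\beta$ holomorphic. The holomorphic form $\Theta=(\prod_a h_a)\,\omega$ restricts on each $\{h_a=0\}$ to $\lambda_{D_a}(\prod_{b\neq a}h_b)\,dh_a\neq 0$, so $\Theta$ has no codimension-one zero along any pole; away from the poles $\Theta$ is a holomorphic defining form for $\F$ which, after shrinking $U$, has no divisorial zeros because $N^*_\F$ is saturated. Hence $\omega$ has no zeros in codimension one. Comparing with the curvature computation of Proposition \ref{P:conormal} (equivalently with the weight $\psi$), the coefficient of $[D]$ in the divisorial part is $\lambda_D-1$ when $\lambda_D\neq 0$ and $0$ otherwise, the shift by $-1$ being exactly the vanishing order of the reducing factor $\prod_a h_a$ along each polar component. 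I expect the main obstacle to be this last bookkeeping together with the non-negativity and rationality of $\lambda_D$: one must verify that passing from the \emph{first-integral} vanishing order $\lambda_D$ to the \emph{conormal} coefficient introduces precisely the $-1$, and that no spurious codimension-one zero of the defining form survives after shrinking. Both points ultimately rest on the boundedness of $f$ and the finiteness of the local monodromy supplied by the quotient-singularity assumption.
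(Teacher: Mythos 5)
Your treatment of Items (\ref{I:divhyper1})--(\ref{I:divhyper3}) and of the divisorial-part half of Item (\ref{I:divhyper4}) follows essentially the same route as the paper: the local normal form $f=z^{k/n}\cdot(\text{unit})$ obtained from finiteness of the local monodromy and boundedness of $f$ is exactly the mechanism the paper uses (via the expression $\omega_{|U_x}=\sum\nu_i\,df_i/f_i+\sum m_i\,df_i/f_i+\omega_0$ and the first integral $e^{\int\omega}$), and your uniqueness argument via $\varphi(0)=0$ matches the paper's ``unique modulo a unimodular constant''.

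There is, however, a genuine gap in the first half of Item (\ref{I:divhyper4}), precisely at the point you flag as the main obstacle. Your justification that away from the poles the form $\Theta$ ``has no divisorial zeros because $N^*_\F$ is saturated'' is circular: saturation says a local \emph{generator} of $N^*_\F$ has no codimension-one zeros, but whether $\omega$ (equivalently $\Theta$) is such a generator is exactly what must be proved. The form $d\log f$ can perfectly well vanish on a divisor: if $W$ is an $\F$-invariant hypersurface on which the distinguished first integral takes a nonzero value $c$ and is ramified (locally $\varphi\circ f=z^{1+m}$ for some $\varphi\in\Aut(\D)$ with $\varphi(c)=0$ and $m>0$), then $df$, hence $df/f$, vanishes to order $m$ along $W$. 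Since $\omega$ is closed, its codimension-one zero locus is automatically $\F$-invariant, so ruling out such $W$ near $K$ is the whole content of the assertion. The paper's argument, which is the missing idea here, is to observe that $|f|$ is a well-defined continuous real first integral near $K$ (the monodromy lies in $S^1$) which vanishes identically on $K$; consequently every $\F$-invariant hypersurface through a point of $K$ is contained in $\{|f|=0\}$, i.e.\ is one of the polar components $D_i$, along which $\omega$ visibly has a pole rather than a zero. This is what lets one conclude, after shrinking $U$, that $\omega$ has no zeros in codimension one; your computation that $\Theta$ restricted to each polar component is nonzero handles only the polar components and not the potential ramification divisors away from them.
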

\begin{proof}
	Item (\ref{I:divhyper1}) is obvious. Indeed, $K$ is a component of the polar locus of a closed meromorphic form defining the foliation on $U$.

    Let $x\in K$. If $x\notin H$, then there exists in the neighborhood $U_x$ of $x$ and a  section $f$ of  $I$ over $U_x-K$
    such that $\omega=\frac{df}{f}$. As noticed before, $f$ extends through $K$ as a section of $\mathcal I$ over $U$. As $K\subset { (\omega)}_\infty$, one necessarily has $f(x)=0$. Hence, this section is unique modulo multiplication by a complex number of modulus one. Consequently, $\omega$ is unique in restriction to $U-H$.
	
	If $x\in H\cap K$, there exists a neighborhood $U_x$ of $x$ and a multivalued distinguished first integral $f$ on $U_x-K$
    with finite and non trivial multiplicative monodromy taking values in $S^1$. The uniqueness of $\omega=\frac{df}{f}$ follows from the observations already made in Subsection \ref{SS:quotientsing}. 	This establishes the uniqueness stated in Item (\ref{I:divhyper2}).

    Let $F=f_1\cdots f_r \cdot f_{r+1}\cdots f_p=0$ be a local reduced equation for the polar locus of $\omega$
    in a small neighborhood $U_x$ of $x\in K$, where $f_1\cdots f_r=0$ is a local equation for $H$.
    By construction, there exists $\nu_1,\ldots,\nu_r\in{\mathbb Q}_{>0}$, $m_{r+1},\ldots,m_{p}\in \N_{>0}$, such that
	$$
        \omega_{|U_x}= \sum_{i=1}^r \nu_i\frac{df_i}{f_i} +\sum_{i=r+1}^p {m_i}\frac{df_i}{f_i} +\omega_0
    $$
	where $\omega_0$ is some holomorphic one form.  In particular, the property mentioned in Item \ref{I:divhyper3} is satisfied.
	
    Equivalently, $\F$ admits on $U_x$ a multivalued distinguished first integral of the form $e^{\int \omega}=f=uf_1^{\nu_1}\cdots f_r^{\nu_r} f_{r+1}^{m_{r+1}}\cdots f_p^{m_p}$ where $u$ is a unit. As before, this enables to compute the divisorial part of
	$\restr{\F}{U_x}$, namely $\sum_{i=1}^r( \nu_i -1) D_i+\sum_{i=r+1}^p (m_i-1)D_i$ where $D_i=\{f_i=0\}$.
	This proves  the second assertion of Item (\ref{I:divhyper4}).
	
    By considering the well defined real first integral $g=|f|$,
    one remark that there is no invariant hypersurface passing through $x$ except the poles $D_i$. In particular the germ of $\omega$ along $K$ has no zeroes in codimension one. This establishes the first point of Item (\ref{I:divhyper4}).	
\end{proof}

\subsection{Behavior under pull-back by a surjective morphism}\label{SS:behavior under}

If $\varphi:X\to Y$ is a surjective morphism between complex compact manifolds and $\F$ is a tranversely hyperbolic
foliation with quotient singularities on $Y$, the pull-back foliation $ \varphi^*\F$  carries also a  transversely hyperbolic structure with quotient singularities directly inherited from that of $\F$, that is induced on $X-\varphi^{-1} (H)$ by the sheaf $\varphi^*{\mathcal I}$.
From Proposition \ref*{P:c1dec}, one obtains a decomposition of
$N_\F^*$ which reads (in $\mathrm{Pic} (Y)\otimes\Q$) as
\[
  N_\F^*= L+D
\]
where $D$ is the divisorial part of $\F$ (see Definition \ref{D:divisorial part}), $L$ is a nef $\Q$ line bundle whose Chern class is represented by $\eta$. A similar decomposition holds for the conormal sheaf of $\varphi^* \F$. Both decompositions are indeed naturally related as shown by the next result.

\begin{prop}\label{P:behaviorsurj}
	With assumptions and notations as above,
	\[
        N_{\varphi^* \F}^*= \varphi^* (L) + D'
    \]	
	where $D'$ is the divisorial part of $\varphi^* \F$.
\end{prop}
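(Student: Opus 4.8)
The plan is to compare the two hyperbolic transverse structures --- the one on $\F$ over $Y$ and the inherited one on $\varphi^*\F$ over $X$ --- using the explicit curvature decomposition from Proposition \ref*{P:c1dec}. Recall that the transverse hyperbolic structure on $\varphi^*\F$ is, by definition (Subsection \ref{SS:behavior under}), the one induced on $X - \varphi^{-1}(H)$ by the pulled-back sheaf of distinguished first integrals $\varphi^*\mathcal I$. The key observation is that the defining data for the current $\eta$ are built entirely out of local distinguished first integrals, and these are functorial: if $f$ is a local distinguished first integral for $\F$ near a point of $Y$, then $f\circ\varphi$ is a local distinguished first integral for $\varphi^*\F$ near the corresponding point of $X$. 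Consequently the smooth semipositive $(1,1)$-form $\eta$ on $Y - H$ pulls back to the corresponding form $\eta'$ on $X - \varphi^{-1}(H)$, i.e.\ $\eta' = \varphi^*\eta$ as forms away from the polar/ramification locus.

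The first step is to make this pull-back identity precise at the level of the singular metrics on the conormal bundles. Since $\eta$ on $Y$ is the trivial extension through $H$ of $\varphi^*$ of the Poincar\'e metric (by the last paragraph of Subsection \ref{SS:quotientsing}, it is the unique positive current giving no mass to $H$ and coinciding with its absolutely continuous part), I would argue that $\varphi^*\eta$ --- suitably interpreted --- is the trivial extension through $\varphi^{-1}(H)$ of $\eta'$. The Chern class identity $c_1(N^*_{\varphi^*\F}) = \varphi^* c_1(N^*_\F)$ follows from the standard conormal bundle formula for pull-back foliations (valid wherever $d\varphi$ is not tangent to $\F$, which holds in codimension one since $\varphi$ is surjective). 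Writing $c_1(N^*_\F) = L + D$ with $L$ nef represented by $\eta$ and $D$ the divisorial part, pulling back gives $c_1(N^*_{\varphi^*\F}) = \varphi^* L + \varphi^* D$, where $\varphi^* L$ is nef and is represented by $\varphi^*\eta$. The content of the proposition is then the identification of the nef part of the canonical decomposition of $N^*_{\varphi^*\F}$ with exactly $\varphi^* L$, equivalently $D' = \varphi^* D$ up to the integral/divisorial bookkeeping --- but note the stated conclusion is $N^*_{\varphi^*\F} = \varphi^*(L) + D'$, so I only need that the \emph{nef, $\eta$-represented} part equals $\varphi^*(L)$.

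The concrete computation is to run the local analysis of Subsection \ref{SS:quotientsing} for $\varphi^*\F$ directly. Take $x \in X$ and set $y = \varphi(x)$; pull back the local expression (\ref{E: firstintegral}) for the distinguished first integral $f$ near $y$ to get $f\circ\varphi$ near $x$, then read off its factorization into a unit times powers of local defining equations of the components of the new polar and ramification divisors. By Proposition \ref{P:c1dec} applied to $\varphi^*\F$, the curvature current $T'$ of the metric defined by $\eta'$ decomposes as $T' = \sum r'_D [D] + \eta'$, and the nef class represented by $\eta'$ (equivalently $\varphi^*\eta$) is precisely the $L$-part. Since $\eta' = \varphi^*\eta$ represents $\varphi^*(L)$, and $L$ is the class of the nef summand in the decomposition on $Y$, uniqueness of the decomposition (the nef part is the $\eta$-current, and $\eta$ is characterized as the trivial extension giving no mass to the polar divisor) forces $N^*_{\varphi^*\F} = \varphi^*(L) + D'$.

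**The main obstacle** I anticipate is the careful handling of the pull-back along the ramification locus of $\varphi$ and along $\varphi^{-1}(H)$: a priori $\varphi$ can ramify over $H$ or create new invariant hypersurfaces, so the naive identity $\varphi^* \eta = \eta'$ only holds \emph{off} these loci and one must justify that the trivial extensions match, i.e.\ that both $\eta'$ and $\varphi^*\eta$ give no mass to any divisor and agree with their absolutely continuous parts. This is exactly where the characterization of $\eta$ as having zero Lelong numbers everywhere (so that $\eta$ is nef by Demailly's approximation theorem) is used: pulling back a current with zero Lelong numbers by a surjective holomorphic map preserves this property away from exceptional behavior, so $\varphi^*\eta$ remains the trivial extension and continues to represent a nef class. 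The bookkeeping of rational residues $r_D$ versus $r'_D$ --- which differ by the ramification multiplicities of $\varphi$ --- all gets absorbed into the divisorial part $D'$, so it does not affect the stated conclusion, but verifying that nothing leaks from $\varphi^*\eta$ into a divisorial contribution is the technical heart of the argument.
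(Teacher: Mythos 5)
Your proposed route differs from the paper's: the paper works entirely at the level of divisors, introducing the transverse ramification divisor $I$ of $\varphi$ (the zero divisor of $\varphi^*\omega$ for $\omega$ a local generator of $N^*_\F$), using the exact relation $N^*_{\varphi^*\F}=\varphi^*N^*_\F+I$, and then verifying by the local computation of Proposition \ref{P:divhyper} that $D'=\varphi^*D+I$. Your idea of instead identifying the nef summand directly --- showing that the current $\eta'$ attached to the pulled-back transverse structure equals $\varphi^*\eta$ (legitimate because $\eta$ has continuous plurisubharmonic local potentials, so its pull-back is well defined, puts no mass on divisors, and is therefore the trivial extension of $\varphi^*(\eta|_{Y-H})$), and then reading off the conclusion from Proposition \ref{P:c1dec} applied to $\varphi^*\F$ --- is a viable and arguably cleaner alternative.

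However, as written your argument contains a false step that makes the middle of the proof internally inconsistent. The identity $c_1(N^*_{\varphi^*\F})=\varphi^*c_1(N^*_\F)$ is not the ``standard conormal bundle formula'': the correct formula is $N^*_{\varphi^*\F}=\varphi^*N^*_\F+I$, and $I$ is typically nonzero precisely in the situation at hand, namely when $\varphi$ ramifies over $\F$-invariant hypersurfaces (for instance over the polar divisor $H$ or over $\Supp D$; locally, if $\F$ is cut out by $dz$ and $\varphi$ reads $z=w^m$ transversally, then $\varphi^*dz=mw^{m-1}dw$ vanishes along $\{w=0\}$). Surjectivity of $\varphi$ guarantees that the pull-back foliation is defined, not that $\varphi^*\omega$ has no divisorial zeros. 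Consequently your assertion that $D'=\varphi^*D$ ``up to bookkeeping'' is also off by exactly $I$: if you keep $c_1(N^*_{\varphi^*\F})=\varphi^*L+\varphi^*D$ while the ramification multiplicities are genuinely absorbed into $D'$, the two sides of the stated equality differ by $I$. The fix is either to drop the false identity entirely and rely only on your last paragraph (Proposition \ref{P:c1dec} applied to $\varphi^*\F$ together with the current identity $\eta'=\varphi^*\eta$), or to follow the paper and carry $I$ through explicitly, proving $D'=\varphi^*D+I$ via Item (\ref{I:divhyper4}) of Proposition \ref{P:divhyper}.
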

\begin{proof}
    Let $\G=\varphi^*\F$. On $X$, we have a $\G$ invariant divisor $I$ which, roughly speaking, is the locus where $\varphi$ ramifies over the direction tranverse to $\F$. More precisely, if $\omega$ is a generator of $N^*_{\F}$ on an open subset $U$ , the restriction of $I$ to $\varphi^{-1}(U)$ is the zeroes divisor of $\varphi^*\omega$.
    The line bundles $\varphi^* N_\F^*$ and $ N_\G^*$ are related by the equality
    $N_\G^*=\varphi^* N_\F^* +I.$

    Then, we have just to verify that $D'=\varphi^*D +I$. It suffices to show that equality
    \begin{equation}\label{E:localeq}
        D'_{|\varphi^{-1}(U)} =\varphi^*D_{|U} +I_{|\varphi^{-1}(U)}
    \end{equation}
    holds for every member $U$ of an open cover ${(U)}_{U\in \mathcal U}$  of $Y$.
    First, let $x\in Y-\mathrm{Supp}(D)$. In some neighborhood $U$ of $x$, $N_\F^*$ is generated by $df$ where $f\in \mathcal I (U)$ and Equation (\ref{E:localeq}) is obviously true.

    If $x\in \Supp(D)$, let $D_1,\ldots,D_q$ be the components of $\Supp(D)$ such that $x\in D_i, i=1,\ldots,q$ are ordered such that $r_{D_i}\notin \N$ for $i=1,\ldots,p$, $r_{D_i}\in\N_{>0}$ for $i=p+1,\ldots,q$. According to Subsection \ref{SS:quotientsing}, $\F$ is defined in some  small neighborhood $U$ of $x$ by a closed logarithmic form $\xi= \sum_{i=1}^q (r_{D_i}+1) \frac{df_i}{f_i} + \sum_{i=q+1}^s \frac{df_i}{f_i}$ where $f_i=0$ is a local reduced equation of $D_i$ and $h_i=0$ are additional poles with residues equal to one. Moreover $\restr{\xi}{U_x-\bigcup_{i=1}^p D_i}$ is a section of $\mathcal I_{d\log}$. Set $D_i=\{f_i=0\}$ for $i>q$. Note that $\omega= \varphi^*\xi$ is a closed logarithmic form on $\varphi^{-1} (U_x)$ fulfilling the hypothesis of Proposition \ref{P:divhyper}, with $K=\varphi^{-1}( \bigcup D_i)$ (in restriction to $\varphi^{-1} (U))$. Item (\ref{I:divhyper4}) of Proposition \ref{P:divhyper} determines the divisorial part of $\restr{\G}{\varphi^{-1}(U)}$. An elementary calculation yields
    \[
        D'_{\arrowvert\varphi^{-1} (U)}=\restr{\left(\sum_{i=1}^q r_{D_i} \varphi^*(D_i) +\sum_{i=1}^s (\varphi^* (D_i)-\varphi^* (D_i)_{red}) \right)}{\varphi^{-1}(U)}.
    \]

    On the other hand, according to the first part of Item (\ref{I:divhyper4}) of Proposition \ref{P:divhyper},   $f\omega$ is a local generator of $\F$ on $U$, where $f=\prod_i f_i$. This implies that
    \[
        \restr{I}{\varphi^{-1}(U)}=     \sum_{i=1}^s \restr{( (\varphi^* (D_i)-\varphi^* (D_i)_{red}))}{\varphi^{-1}(U)} \, ,
    \]
    thus proving the Equality (\ref{E:localeq}).	
\end{proof}

We also state the following two lemmas for further use.

\begin{lemma}\label{L: Pullbackbehavior}
	Let $\varphi:X\to Y$ be a surjective morphism  with connected fibers between compact complex manifolds.
    Let $\G$ a transversely hyperbolic foliation on $X$ with quotient singularities. Assume that there exists on $Y$ a codimension one holomorphic foliation $\F$ such that $\G=\varphi^*\F$. Then $\F$ carries a transversely hyperbolic with quotient singularities structure. Moreover the pull-back of this structure via $\varphi$ coincides with that of $\G$ wherever defined.
\end{lemma}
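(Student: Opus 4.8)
The plan is to descend the transverse hyperbolic structure of $\G$ along $\varphi$, exploiting that the fibers of $\varphi$, being connected and contained in the leaves of $\G=\varphi^*\F$, force the distinguished first integrals of $\G$ to be pulled back from $Y$. First I would record the consequence of connectedness: as $\varphi$ is a proper surjective morphism with connected fibers between smooth (hence normal) compact manifolds, Stein factorization yields $\varphi_*\mathcal O_X=\mathcal O_Y$. On the other hand, wherever $d\varphi$ is surjective the tangent space to $\G=\varphi^*\F$ contains $\ker d\varphi$, so the fibers of $\varphi$ are tangent to $\G$ and, being connected, lie in single leaves. Hence any local holomorphic first integral $f$ of $\G$ over a $\varphi$-saturated open set is constant along the fibers and, by $\varphi_*\mathcal O_X=\mathcal O_Y$, descends to a first integral $\bar f$ of $\F$ with $f=\varphi^*\bar f$. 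This correspondence $f\mapsto\bar f$ is bijective, with inverse given by pull-back, and $\Aut(\D)$-equivariant since $\varphi^*(\alpha\circ\bar f)=\alpha\circ\varphi^*\bar f$ for $\alpha\in\Aut(\D)$.

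Next I would build the structure on a large open set and extend it. Let $Z\subset Y$ be a hypersurface outside of which $\F$ is regular, $\varphi$ is a submersion (so $\varphi^{-1}(Y-Z)\to Y-Z$ is a proper submersion, hence a locally trivial fibration) and which contains the hypersurface components of $\varphi(H_\G)$, where $H_\G$ is the polar divisor of $\G$. Over $Y^\circ:=Y-Z$ I would define $\mathcal I_\F$ as the image of $\mathcal I_\G$ under $f\mapsto\bar f$. The axioms of Subsection \ref{SS:THF} then transport: the $\bar f$ are nonconstant with image in $\D$ because $f=\varphi^*\bar f$ is, and for $V\subset Y^\circ$ simply connected the monodromy of $\mathcal I_\G$ over the connected set $\varphi^{-1}(V)$ is trivial (its fundamental group is generated by fiber loops, along which the developing map of $\G$, being fiber-constant, has trivial continuation), so $\mathcal I_\G(\varphi^{-1}(V))$ is a single $\Aut(\D)$-orbit and so is its descent $\mathcal I_\F(V)$. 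This equips $\restr{\F}{Y^\circ}$ with a transverse hyperbolic structure satisfying $\varphi^*\mathcal I_\F=\mathcal I_\G$ over $\varphi^{-1}(Y^\circ)$. By \cite[Corollary 5.3]{BPRT} this structure extends through $Z$, making $\F$ a transversely hyperbolic foliation with poles on $Y$; uniqueness of the extension then guarantees that its pull-back still coincides with the structure of $\G$ wherever both are defined.

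The step I expect to be the main obstacle is verifying that every pole of the extended structure is a \emph{quotient} singularity, i.e.\ that the local monodromy of $\mathcal I_\F$ around each component of the polar divisor $H_\F$ is finite (non-triviality being built into the definition of $H_\F$). Here I would argue by ramification. Fix a component $H_i$ of $H_\F$ and a small loop $\gamma$ around it based in $Y^\circ$, choose a component $\widetilde H$ of $\varphi^{-1}(H_i)$ dominating $H_i$ with ramification index $e\ge 1$, and a small loop $\widetilde\gamma$ around $\widetilde H$. In suitable transverse coordinates $\varphi^*z=w^{e}\cdot(\text{unit})$, so $\varphi\circ\widetilde\gamma$ winds $e$ times around $H_i$; comparing the analytic continuations of $f=\varphi^*\bar f$ and of $\bar f$ gives the identity $\rho_\F(\gamma)^{e}=\rho_\G(\widetilde\gamma)$ in $\Aut(\D)$, where $\rho_\F,\rho_\G$ denote the monodromy representations. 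Now $\rho_\G(\widetilde\gamma)$ has finite order: if $\widetilde H\subset H_\G$ this is the quotient-singularity hypothesis on $\G$, and if $\widetilde H\not\subset H_\G$ then $\rho_\G(\widetilde\gamma)=\id$. In either case $\rho_\F(\gamma)^{e}$ has finite order, hence so does $\rho_\F(\gamma)$. This shows that $\F$ is a transversely hyperbolic foliation with quotient singularities and completes the descent.
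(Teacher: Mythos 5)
Your proposal is correct and follows essentially the same route as the paper: descend the distinguished first integrals over a Zariski-open subset of $Y$ using connectedness/compactness of the fibers (and their tangency to $\G=\varphi^*\F$), extend the resulting transverse structure through the complement via the extension results of \cite{BPRT}, and control the local monodromy of $\F$ around a component of its polar divisor by comparing a small loop $\gamma$ downstairs with a transverse loop upstairs whose image is homotopic to $\gamma^{e}$, so that finiteness of the monodromy of $\G$ forces finiteness of that of $\F$. The only cosmetic differences are that the paper descends sections by the maximum principle on compact fibers rather than via $\varphi_*\mathcal O_X=\mathcal O_Y$, and phrases the ramification-index step as a free homotopy of loops rather than the explicit identity $\rho_\F(\gamma)^{e}=\rho_\G(\widetilde\gamma)$; these are equivalent.
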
	
\begin{proof}
    Let $H$ be the divisor of poles of $\G$. As $H$ is necessarily $\G$ invariant, the restriction of $\varphi$ to $H$ is not surjective. Therefore, there exists a non-empty open Zariski subset $U$ of $Y$ such that $\G$ is transversely hyperbolic (without poles) in restriction to $V:=\varphi^{-1} (U)$. In addition, one can suppose that $\restr{f}{V}$ is a smooth morphism onto $U$. Let $(W)$ be a covering of $U$ by open subsets (in the euclidean topology) such that the sheaf $\mathcal I$ of distinguished first integrals of $\G$ is constant on $\varphi^{-1}(W)$. The fibers being compact submanifolds, every global section of ${ \mathcal I}_{\varphi^{-1}(W)}$ descends to $W$. Consequently, $\F$ admits on $U$ a transversely hyperbolic structure defined by the locally constant sheaf $\mathcal J$ such that $\mathcal I=\varphi^*\mathcal J$. Consider the analytic subset  of $Y$ defined by $Z= Y-U$. The transverse hyperbolic structure defined by $\mathcal J$ extends through $Z-K$ where $K$ is the union of codimension one components of $Z$ around which the local monodromy is non trivial. Let $Z_0$ be a component of $Z$. Pick a general point $p$ of $Z_0$ and let $\gamma$ be a loop around $Z_0$ in some neighborhood $V_p$ of $p$.  Let $q\in \varphi^{-1} (p)$. Let $\D_q$ be a small disk centered at $q$ such that $\D_q-\{q\} $ is transverse to $\G$ and $\varphi ( \D_q )-\{p\} $ is transverse to $\F$. Let $\varepsilon :[0,1]\to  \D_q -\{q\}$ a small loop of index one around $q$. Obviously, $\varphi (\varepsilon)$ is a loop freely homotopic to a non zero multiple of $\gamma$ in $V_p-Z_0$. Because $\G$ has quotients singularities,  this implies that the local  monodromy representation along $\gamma$ has finite image, whence the result.
\end{proof}

\begin{lemma}\label{L:foldescent}
	Let $\varphi:X\to Y$ be a surjective morphism  with connected fibers between compact complex manifolds.
    Let $\G$ a transversely hyperbolic foliation on $X$. Denote by $\rho$ its monodromy representation. Assume that there exists a representation $\rho': \pi_1(Y)\to \Aut (\D)$ such that $\rho=\varphi^* \rho'$. Then there exists on $Y$ a transversely hyperbolic foliation $\F$ such that $\G=\varphi^*\F$ and whose monodromy representation is $\rho'$.
\end{lemma}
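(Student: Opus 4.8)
The plan is to construct the descended foliation $\F$ on $Y$ directly from the hypothesis $\rho = \varphi^* \rho'$, and then verify that its pull-back recovers $\G$ together with its transverse structure. The key observation is that a transversely hyperbolic foliation is encoded by the pair consisting of its monodromy representation $\rho \colon \pi_1(X) \to \Aut(\D)$ and an equivariant \emph{developing map} $\widetilde{\varphi} \colon \widetilde{X} \to \D$ on the universal cover, satisfying $\widetilde{\varphi} \circ g = \rho(g) \circ \widetilde{\varphi}$ for all $g \in \pi_1(X)$. First I would pass to universal covers: let $\widetilde{Y}$ be the universal cover of $Y$ and $\widetilde{X}$ that of $X$, so that $\varphi$ lifts to $\widetilde{\varphi} \colon \widetilde{X} \to \widetilde{Y}$. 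Since $\varphi$ has connected fibers, the induced map $\varphi_* \colon \pi_1(X) \to \pi_1(Y)$ is surjective, and the hypothesis $\rho = \rho' \circ \varphi_*$ says precisely that the developing map of $\G$ is invariant under $\ker(\varphi_*) = \ker(\rho) \cap \ker(\varphi_*)$ in the appropriate sense.

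The heart of the argument is to show that the developing map $\mathrm{dev}_\G \colon \widetilde{X} \to \D$ factors through $\widetilde{\varphi}$. I would argue this as follows: the fibers of $\varphi$ are connected, hence their lifts to $\widetilde{X}$ project to single points of $\widetilde{Y}$; I must check that $\mathrm{dev}_\G$ is constant on each such fiber. Here the key input is that $\G$ is the pull-back of \emph{some} geometric object on $Y$ at the level of the leaves — but since we are only given the representation, the cleanest route is to invoke the same fiber-compactness argument used in Lemma \ref{L: Pullbackbehavior}: over a Zariski open $U \subset Y$ on which $\varphi$ is a smooth submersion with compact fibers, any holomorphic first integral of $\G$ restricted to a fiber is constant (a holomorphic function on a compact connected complex manifold is constant), so $\mathrm{dev}_\G$ descends locally to a map on $U$. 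The equivariance $\rho = \varphi^* \rho'$ guarantees that these local descents are compatible with the $\pi_1(Y)$-action via $\rho'$, hence glue to a global equivariant developing map $\mathrm{dev}_\F \colon \widetilde{Y} \to \D$ with $\mathrm{dev}_\F \circ \widetilde{\varphi} = \mathrm{dev}_\G$. This $\mathrm{dev}_\F$ defines the sheaf of distinguished first integrals $\mathcal J$ of a transversely hyperbolic foliation $\F$ on $U$, with monodromy $\rho'$.

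Next I would extend $\F$ from $U$ across the complement $Z = Y - U$. The non-surjectivity of $\restr{\varphi}{H}$ (for $H$ the pole divisor of $\G$) shows that $\G$ is honestly transversely hyperbolic, without poles, over $V = \varphi^{-1}(U)$, so the local monodromy of $\F$ around any codimension-one component of $Z$ is inherited from that of $\G$ and is therefore finite by the quotient-singularities hypothesis on $\G$ — this is exactly the monodromy computation carried out at the end of Lemma \ref{L: Pullbackbehavior}, comparing a small loop $\varepsilon$ transverse to $\G$ with its image $\varphi(\varepsilon)$, a nonzero multiple of a loop $\gamma$ around the component of $Z$. Applying \cite[Corollary 5.3]{BPRT}, the foliation $\F$ extends across $Z$, and the finiteness of the local monodromies upgrades this to an extension as a transversely hyperbolic foliation with quotient singularities. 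By construction $\varphi^* \mathcal J = \mathcal I$ on $V$, so $\varphi^* \F = \G$ with matching transverse structures, and the monodromy of $\F$ is $\rho'$.

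The main obstacle I anticipate is the descent of the developing map in the second paragraph, specifically verifying that the local first integrals of $\G$ on $U$ genuinely descend to $U$ as a \emph{locally constant sheaf} $\mathcal J$ whose monodromy is exactly $\rho'$ rather than merely a sheaf whose monodromy factors appropriately. The compactness-of-fibers argument shows each first integral is fiberwise constant and hence descends setwise; the delicate point is that the $\Aut(\D)$-transition data defining $\mathcal I$ descend consistently, which is where the precise equality $\rho = \varphi^* \rho'$ (not merely an abstract factorization) does the work, pinning down the $\Aut(\D)$-ambiguity on overlaps. I would handle this by working on the universal covers throughout, where the ambiguity is resolved by a single global equivariant map rather than a cocycle, reducing the gluing to the already-established surjectivity of $\varphi_*$ on fundamental groups.
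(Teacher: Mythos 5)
Your proposal is correct and follows essentially the same route as the paper: both arguments descend the distinguished first integrals of $\G$ to $Y$ over the locus where $\varphi$ is smooth by observing that they are constant on the compact connected fibers, use the hypothesis $\rho=\varphi^*\rho'$ to control the monodromy of the descended sheaf (your developing-map formulation on universal covers is just the global repackaging of the paper's ``$\mathcal I$ is globally constant over $\varphi^{-1}(W)$ for $W$ simply connected'' step), and then extend across $Y-U$ via the extension result recalled in Subsection~\ref{SS:quotientsing}. One minor slip: you invoke a ``quotient-singularities hypothesis on $\G$'' to handle the local monodromy around components of $Y-U$, but no such hypothesis appears in this lemma (it belongs to Lemma~\ref{L: Pullbackbehavior}); the paper instead pins down the monodromy of the extended structure directly from the surjectivity of $\pi_1(U)\to\pi_1(Y)$ together with the fact that $\rho'$ is defined on all of $\pi_1(Y)$.
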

\begin{proof}
    Let $U\subset Y$ a non-empty open Zariski subset such that $\varphi$ restricts to a smooth morphism on $\varphi^{-1} (U)$. By assumption on the representation $\rho$, the sheaf $\mathcal I$ of distinguished first integrals of $\G$ is globally constant  over $W$, where $W$ is any simply connected open subset of $U$. Like before, this implies that any section $s\in \mathcal I (\varphi^{-1}(U))$ is constant on the fibers of $\varphi$. Consequently,  there exists  on $U$ transversely hyperbolic foliation $\restr{\F}{U}$, which then extends as a foliation $\F$ on the whole $Y$ (as recalled in Subsection \ref{SS:quotientsing}) and whose monodromy representation is given by the composition morphism:
    \[
        \pi_1 (U)\to \pi_1(Y)\xrightarrow{\rho'} \Aut(\D) \, .
    \]
    Since the first arrow is surjective,  the result follows.
\end{proof}

\begin{rem}\label{R:KLT}
    Let $\F$ be a tranversely hyperbolic foliation with quotient singularities on a compact complex manifold $X$.  Then there exists a smooth modification $\pi:\hat X \to X$ obtained by a sequence of successive blows-up with smooth center such that the divisorial part of $\pi^*\F$ is supported on an invariant  normal crossing divisor $D=D_1+\ldots+ D_r$ . In particular there exists a $r$ uple of rational number $(\lambda_1,\ldots,\lambda_r)\in\Q_{<1}^r$ such that $E:=N_ { \pi^*\F}+\sum_i \lambda_i D_i$ is a  pseudoeffective $\Q$ line bundle whose Chern class is represented by a non trivial positive $(1,1)$-form $\eta$ with $L_{loc}^1$ coefficient (and actually smooth on a Zariski dense open subset).  When $X$ is compact  K\"ahler,  $\pi^*\F$ is a particular case of a KLT foliation in the terminology of \cite[Section 8.1]{MR3644247}. Note also that the existence of $\eta$ guarantees that the positive part of $L$ in its Zariski decomposition is non trivial.  	
\end{rem}

\subsection{Uniqueness of the transverse structure}

The following result is established in \cite[Corollary 5.6]{BPRT}.

\begin{prop}\label{P:uniqueness}
	Let $\F$ a transversely hyperbolic foliation (with quotient singularities) on a projective manifold.
    Assume that $\F$ is not algebraically integrable. Then, the hyperbolic transverse structure is unique, i.e., any transverse hyperbolic structure for $\F$  on a dense Zariski subset is defined by the same sheaf of distinguished first integrals.
\end{prop}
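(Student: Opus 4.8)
The plan is to translate the statement into a rigidity property of the monodromy and developing map, and to reduce everything to showing that an intertwining holomorphic self-map between two copies of $\D$ must be a Möbius transformation. Suppose $\F$ carries two transversely hyperbolic structures, given by sheaves of distinguished first integrals $\mathcal I_1$ and $\mathcal I_2$ defined over dense Zariski open subsets; restricting, I may assume both are genuine transversely hyperbolic structures (without poles) on a common dense Zariski open $X_0$ on which $\F$ is regular. On the universal cover $\widetilde{X_0}$ each structure unfolds into a developing map $\dev_i\colon \widetilde{X_0}\to\D$ which is a first integral of the lifted foliation and is equivariant with respect to a monodromy representation $\rho_i\colon\pi_1(X_0)\to\Aut(\D)$. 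Since $\dev_1$ and $\dev_2$ have the same fibers (the lifted leaves of $\F$), there is a holomorphic map $\Phi$ between open subsets of $\D$ with $\dev_2=\Phi\circ\dev_1$; feeding this into the two equivariance relations and using that $\dev_1$ is a submersion onto an open set yields the intertwining identity $\Phi\circ\rho_1(\gamma)=\rho_2(\gamma)\circ\Phi$ for every $\gamma\in\pi_1(X_0)$. Because property (2) in the definition of $\mathcal I$ makes a transverse hyperbolic structure invariant under postcomposition by a global element of $\Aut(\D)$, the equality $\mathcal I_1=\mathcal I_2$ is equivalent to the single assertion that $\Phi$ extends to an element of $\Aut(\D)$.

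Next I would use non-algebraic integrability to force the monodromy group $\Gamma_1:=\rho_1(\pi_1(X_0))$ to be non-elementary, i.e.\ not conjugate into a subgroup fixing a point of $\overline{\D}$ or a geodesic. I would argue by contraposition: in each elementary case one produces a first integral whose generic fibers are algebraic, contradicting the hypothesis. If $\Gamma_1$ is finite, $\dev_1$ descends to a global first integral; if $\Gamma_1$ fixes an interior point, conjugating into the rotation group $S^1$ makes $|\dev_1|$ descend to a continuous plurisubharmonic first integral; if $\Gamma_1$ fixes a boundary point or preserves a geodesic, a suitable real part of the logarithmic primitive of $\dev_1$ descends to a pluriharmonic first integral. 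In each situation the level sets of the descended first integral are unions of leaves of $\F$, and on a projective manifold this yields a dominant rational map to a curve with the leaves among its fibers, so $\F$ is algebraically integrable --- here the projectivity (through the nef, $\partial\bar\partial$-exact structure of $\eta$ and the decomposition of $N^*_\F$ from Proposition \ref{P:c1dec}) is what upgrades an analytic first integral to an algebraic fibration. This contradiction shows $\Gamma_1$ is non-elementary.

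The hard part is the final rigidity step: proving that the intertwining holomorphic map $\Phi$ is Möbius once $\Gamma_1$ is non-elementary. I would exploit the dynamics of $\Gamma_1$ on $\overline{\D}$. For a hyperbolic element $\gamma$, iterating the intertwining relation shows that $\Phi$ carries the attracting (resp.\ repelling) fixed point of $\rho_1(\gamma)$ to that of $\rho_2(\gamma)$; since a non-elementary group has a perfect limit set in which the fixed points of hyperbolic elements are dense, $\Phi$ must agree along the limit set $\Lambda(\Gamma_1)$ with a uniquely determined Möbius transformation $M$. Replacing $\dev_2$ by $M^{-1}\circ\dev_2$, one reduces to the case where $\Phi$ commutes with the non-elementary group $\Gamma_1$ and fixes $\Lambda(\Gamma_1)$ pointwise; a holomorphic self-map of $\D$ centralizing a non-elementary Fuchsian action and fixing its infinite perfect limit set is forced to be the identity, so $\Phi=M\in\Aut(\D)$. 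This gives $\mathcal I_1=\mathcal I_2$ on $X_0$, and the extension results recalled in Subsection \ref{SS:quotientsing} propagate the equality across the polar divisors, yielding uniqueness. I expect the principal technical obstacles to be the case analysis guaranteeing non-elementary monodromy (and the passage there from an analytic to an algebraic first integral) and the boundary-dynamics argument identifying $\Phi$ with $M$.
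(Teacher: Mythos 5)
The paper does not actually prove this proposition: it is quoted verbatim from \cite[Corollary 5.6]{BPRT}, so your argument can only be judged on its own terms. Its overall shape --- reduce to a rigidity statement for a holomorphic map $\Phi$ intertwining the two monodromies, after using non-integrability to constrain the monodromy group --- is a reasonable plan, but two of your three main steps contain genuine gaps, and there is a smaller one at the outset. The small one first: the fibres of $\dev_1$ need not be connected, so $\dev_2$ need not be constant on them; $\Phi$ is a priori only locally defined (multivalued on the leaf space), and this must be repaired before the global intertwining relation can even be written down.

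The non-elementarity step rests on the principle that a real-analytic, plurisubharmonic or pluriharmonic first integral on a projective manifold ``upgrades'' to an algebraic fibration. That principle is false: the foliation defined by $\lambda_1\,dx/x+\lambda_2\,dy/y$ with $\lambda_1/\lambda_2\notin\Q$ admits the real-analytic first integral $|x|^{2\lambda_1}|y|^{2\lambda_2}$ and is not algebraically integrable, and transversely affine or Euclidean foliations generally carry such integrals without being fibrations. What actually excludes elementary monodromy is the numerical input: for instance, if the monodromy fixed an interior point of $\D$, the positive part $\eta$ of $c_1(N^*_\F)$ would acquire a globally bounded $\partial\bar\partial$-potential and be cohomologically trivial, contradicting $\nu(N^*_\F)=1$; the boundary-point and geodesic cases need a comparable (and less immediate) appeal to Theorem \ref{TH:conormalvshyperbolic}. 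You gesture at this but your written implication goes the wrong way. In fact the structure theory (\cite{MR3644247}, Theorem 4 and Proposition 4.6) gives much more than non-elementarity in the non--algebraically-integrable case, namely density of the monodromy in $\Aut(\D)$, and you will need that strength.

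The final rigidity step fails as stated. From the intertwining relation you deduce that $\Phi$ carries fixed points of hyperbolic elements of $\rho_1$ to those of $\rho_2$, and conclude that $\Phi$ agrees on the limit set with a M\"obius transformation $M$. But an equivariant correspondence of limit sets matching the fixed points of corresponding hyperbolic elements is in general \emph{not} the restriction of a M\"obius map: two isomorphic, non-conjugate cocompact Fuchsian groups admit an equivariant boundary homeomorphism matching all such fixed points, and it is only quasisymmetric --- this is precisely the phenomenon underlying Teichm\"uller theory. Nothing in your boundary-dynamics argument uses the holomorphy of $\Phi$, which is the only source of rigidity available. A correct route is interior, not boundary: density of $\Gamma_1$ in $\Aut(\D)$ forces the open $\Gamma_1$-invariant image of $\dev_1$ to be all of $\D$, and the hyperbolic derivative $\bigl(|\Phi'(z)|(1-|z|^2)\bigr)/\bigl(1-|\Phi(z)|^2\bigr)$, being $\Gamma_1$-invariant and hence constant, must equal $1$ by the curvature computation (equality case of Schwarz--Pick), whence $\Phi\in\Aut(\D)$.
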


\subsection{Relationship with numerical properties of the conormal bundle}
The Theorem below is essentially proved in \cite{MR3124741} and describes the interplay between the existence of a transverse hyperbolic structure and  positivity properties of the conormal bundle of a foliation. The following is essentially a reformulation of  \cite[Th\'eor\`eme 1 and Proposition 5.1]{MR3124741}, see also   \cite[Section 3.2]{MR3644247},  where it is recalled that the coefficients $r_D$
appearing in Equation (\ref{E:summation}) coincide with the coefficients of the divisorial Zariski decomposition of
$c_1(N^*_{\F})$ and must be, therefore, non-negative.

\begin{thm}\label{TH:conormalvshyperbolic}
    Let $\F$ a codimension one foliation on a compact Kähler manifold $X$ equipped with a K\"ahler form $\Theta$. Assume that $N_\F^*$ is pseudo-effective with numerical dimension one. Let $N=\sum_{i=1}^r \lambda_i N_i$ be the negative part in the Zariski decomposition of $c_1(N_\F ^*)$.	
    Then
    \begin{enumerate}
	   \item The coefficients $\lambda_i$ are positive rational numbers.
       \item\label{I:conormal2} The intersection matrix $m_{ij}=N_i\cdot N_j \cdot \Theta^ {n-2}$ is negative definite.
	   \item $\F$ admits a transverse hyperbolic structure with quotient singularities on $X$ such that
	   \begin{enumerate}
            \item the divisor of poles is $H= \sum\beta_i N_i$, where $\beta_i=0$ if $\lambda_i\in\N$, $\beta_i=1$ otherwise; and
	        \item the divisorial part of $\F$ with respect to the given transversely hyperbolic structure  is $N$.
       \end{enumerate}
    \end{enumerate}	
\end{thm}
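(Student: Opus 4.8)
The plan is to reconstruct the transverse hyperbolic structure from the positivity of $N_\F^*$ by combining the divisorial Zariski decomposition with the analytic reconstruction of \cite{MR3124741} and \cite{MR3644247}. First I would record the decomposition: since $c_1(N_\F^*)$ is pseudo-effective, write $c_1(N_\F^*) = P + N$ in $H^{1,1}_{\partial\bar\partial}(X,\R)$, where $P$ is the positive (modified nef) part and $N = \sum_i \lambda_i N_i$ is the effective negative part, the $N_i$ being the prime divisors occurring with positive coefficient. Item (2) is built into this construction: the support of the negative part is a $\Theta$-exceptional family, and the negative-definiteness of $m_{ij} = N_i\cdot N_j\cdot\Theta^{n-2}$ is precisely the defining property of such a family. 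I would therefore deduce (2) directly from the general theory (see \cite[Section 3.2]{MR3644247}), without yet invoking the numerical dimension hypothesis.

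The hypothesis $\nu(N_\F^*)=1$ enters in producing the transverse object. Represent $P$ by a closed positive current $\eta$ with vanishing Lelong numbers; numerical dimension one means $\langle P^2\rangle = 0$, forcing $\eta\wedge\eta = 0$, so that $\eta$ has rank at most one almost everywhere. The crucial point is that this degenerate current is adapted to $\F$: if $\omega$ is a local generator of $N_\F^*$, then $\eta = \frac{i}{\pi} e^{2\psi}\,\omega\wedge\overline\omega$ for a plurisubharmonic weight $\psi$, and $\eta$ is exactly the curvature current of the induced singular metric on $N_\F^*$ as in \eqref{E:curvature equation}. Because it is the conormal bundle, and not the normal bundle, that is pseudo-effective, the sign of the transverse curvature is negative, which singles out the hyperbolic — rather than Euclidean or spherical — structure.

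Away from $\Supp(N)$ and the singular set of $\F$, the pair $(\eta,\psi)$ then satisfies the relation $\frac{i}{\pi}\partial\overline\partial\psi = \eta = \frac{i}{\pi}e^{2\psi}\,\omega\wedge\overline\omega$ that characterizes the pull-back of the Poincar\'e metric by a local distinguished first integral in Subsection \ref{SS:THF}. Integrating this transverse structure produces multivalued first integrals with values in $\D$ and monodromy in $\Aut(\D)$, i.e. a transverse hyperbolic structure on $X-\Supp(N)$. This step, which reproduces \cite[Th\'eor\`eme 1 and Proposition 5.1]{MR3124741} in the compact K\"ahler setting, is where I expect the main obstacle to lie: one must establish the rank-one degeneracy of $\eta$ rigorously through Boucksom's positive intersection theory, control the regularity of the potential $\psi$, and integrate the resulting transverse connection across the singular locus of $\F$ so as to obtain a genuine developing map with values in $\D$.

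It remains to analyze the structure along each $N_i$ to obtain items (1) and (3). At a general point of $N_i$, with $z=0$ a local defining equation, the developing map has the form $z^{\lambda_i+1}$; the negative-definiteness of the $N_i$ makes these divisors rigid, and the local analysis shows that the monodromy of the structure around $N_i$ has finite image, which forces $\lambda_i\in\Q_{>0}$ and gives item (1). This monodromy is trivial precisely when $\lambda_i\in\N$ and is nontrivial of finite order otherwise, so $\F$ is transversely hyperbolic with quotient singularities whose divisor of poles is $H=\sum_i\beta_i N_i$ with $\beta_i=0$ for $\lambda_i\in\N$ and $\beta_i=1$ otherwise, which is item (3a). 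Finally, matching the coefficients $r_{N_i}$ of Equation \eqref{E:summation}, computed from Proposition \ref{P:divhyper}, with the Zariski coefficients $\lambda_i$ identifies the divisorial part of $\F$ with $N$, which is item (3b) and completes the proof.
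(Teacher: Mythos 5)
Most of your outline retraces the construction of \cite{MR3124741} and \cite[Section 3.2]{MR3644247}, which is exactly what the paper does by citation: items (2), (3a), (3b) and the very existence of the transverse structure are taken from there, so your sketch of that part is acceptable in spirit even if it defers the hard analytic work. The genuine gap is in your treatment of item (1), which happens to be the \emph{only} point the paper actually proves (rationality of the $\lambda_i$ when $X$ is K\"ahler but not projective). You write that ``the local analysis shows that the monodromy of the structure around $N_i$ has finite image, which forces $\lambda_i\in\Q_{>0}$.'' This is circular: the local model near a general point of $N_i$ is a distinguished first integral of the form $z^{\lambda_i+1}$ with a priori $\lambda_i\in\R_{>0}$, whose local monodromy is (conjugate to) an elliptic rotation by angle $2\pi(\lambda_i+1)$; that monodromy has finite image \emph{if and only if} $\lambda_i\in\Q$. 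Nothing in the local analysis gives you finiteness for free --- it is precisely equivalent to the rationality you are trying to establish. In the projective case one can extract rationality from the rational polarization (the $\lambda_i$ solve a linear system with rational data), but on a general compact K\"ahler manifold the K\"ahler class $\Theta$ need not be rational, which is why this point requires a separate argument.

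The paper's argument is cohomological and uses item (2) in an essential way: by \cite[Proposition 5.1]{MR3124741} there is, on a neighborhood $U$ of $\Supp(N)$, a closed logarithmic form defining $\F$ with residues $\lambda_i+1$ along $N_i$, residue $1$ along an extra polar hypersurface $A$ meeting $\Supp(N)$ in codimension two, and no zeros in codimension one; this forces the real class of $N^*_\F$ to agree with that of $N=\sum\lambda_i N_i$ in $H^2(U,\R)$, while the class of the line bundle $N^*_\F$ lies in $H^2(U,\Q)$. If some $\lambda_i$ were irrational one would get a nontrivial real relation $\sum\nu_i c_1(N_i)=0$ in $H^2(U,\R)$; but then $\bigl(\sum\nu_i c_1(N_i)\bigr)^2\cdot\Theta^{n-2}$ is simultaneously zero (the class is exact, so the integral over the $N_i$ of an exact representative vanishes) and strictly negative (by the negative definiteness of $m_{ij}$ from item (2)), a contradiction. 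If you want to salvage your route, you must replace the appeal to ``finite monodromy'' by an argument of this kind; as written, item (1) is not proved.
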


\begin{proof}
    Everything has already been established in 	\cite{MR3124741, MR3644247} except the rationality of the coefficients $\lambda_i$'s when the ambient manifold is K\"ahler but non projective. Let us justify this property. According to \cite[Proposition 5.1]{MR3124741}, there exists in a small neighborhood $U$ of $\mathrm{Supp}(N)$ a closed logarithmic form  $\omega$ defining the foliation on $U$  (which restricts to a section of ${ \mathcal I}_{d\log)}$ on $U-\mathrm{Supp}(N)$) with the following additional properties:
    \begin{enumerate}
        \item The divisor of poles of $\omega$ has the following form:
        \[
            {(\omega)}_\infty=\sum N_i +A
        \]
        where $A$ is a hypersurface of $U$ intersecting $\mathrm{Supp}(N)$ along a codimension two subset.
        \item $\mathrm{Res}_{N_i} \omega=\lambda_i +1$, $\mathrm{Res}_{A} \omega=1$
        \item $\omega$ has no zeroes in codimension one.
    \end{enumerate}	
    As an immediate consequence,  the real Chern classes class of $N^*_{\F}$ and of $N$ coincide in $H^2(U,\mathbb R)$. On the other hand, the class of $N^*_{\F}$ lies in $H^2(U,\mathbb Q)\subset H^2(U,\mathbb R)$  as the class of any line bundle.

    Suppose by contradiction that at least one of the coefficients $\lambda_i$ lies in $\R-\Q$. By rationality of $c_1(N_i)$, one promptly deduces that there exists $(\nu_1,\ldots,\nu_r)\in\R^r-\{0\}$ such that $\sum_i \nu_i c_1 (N_i)=0$ in $H^ 2(U,\R)$. Now, by de Rham's isomorphism, $c_1(N_i)$ can be represented on $U$  by a real closed two form $\theta_i$ and the linear dependance relation above is equivalent to the fact that $\theta:=\sum \nu_i \theta_i$ is exact on $U$. Let us evaluate the intersection product $I={ \left( \sum \nu_i c_1\left( N_i\right)\right)}^2 \Theta^{n-2}$. By Item (\ref{I:conormal2}) of the Theorem, it is a negative real number but one can alternatively compute this intersection as $I=\sum \nu_i\int_{N_i}\theta\wedge\Theta^{n-2} =0$ by exactness of $\theta$, whence the contradiction.
\end{proof}

\section{Pull-backs of tautological foliations on irreducible polydisc quotients}\label{S:tautological}

\subsection{Irreducible polydisc quotients}
Let $N\ge 2$ be an integer. A discrete subgroup $\Gamma \subset \Aut(\mathbb D)^N$ is a lattice if
the quotient $\mathbb D^N/\Gamma$ has finite volume. A lattice $\Gamma \subset \Aut(\mathbb D)^N$ is irreducible
if it is not commensurable to a product of $\Gamma_1 \times \Gamma_2 \subset \Aut(\mathbb D)^{N_1} \times \Aut(\mathbb D)^{N_2}$
with $N_1, N_2 \ge 1$, $N_1+N_2=N$.

If $\Gamma \subset \Aut(\mathbb D)^N$ is an irreducible lattice then the quotient $\mathbb D^{N}/\Gamma$ is a singular
variety with finitely many cyclic quotient singularities according to  \cite[Theorem 2]{Shimizu}.

The quotient $\mathbb D^{N}/\Gamma$ carries $N$ distinct codimension one tautological foliations $\mathcal G_1, \ldots, \mathcal G_N$,
defined on $\mathbb D^N$ by one the natural projections to $\mathbb D$. The foliations $\G_i$ are transversely hyperbolic
foliations on the complement of the singular points of $\mathbb D^{N}/\Gamma$.

\subsection{ Morphisms to irreducible polydisc quotients}\label{SS:preparation}
Let $\Gamma \subset \Aut(\D)^N$ be an irreducible lattice and
let $X$ be a complex compact manifold. Assume  there exists a morphism $\rho : X \to \mathbb D^N/\Gamma$
with image of postive dimension $p$.
Let $\F_1 = \rho^* \G_1, \ldots, \F_p= \rho^* \G_p$ be the pull-back to $X$ of $p$ among the $N$ tautological foliations
on $\mathbb D^N/\Gamma$ such that the foliations  are in general position.  i.e if $\omega_p$ is a local generator of $N_{ \F_i }^*$, $i=1,\ldots, p$, then $\omega_1\wedge \cdots \wedge\omega_p$ does not vanish identically.

The foliations $\F_i$ are all transversely hyperbolic foliations with finite quotient singularities. The
transverse hyperbolic structure is not necessarily defined  over codimension one components of the fibers of $\rho$ over $\sing(\mathbb D^N/\Gamma)$.

Let $D_j$ be the divisorial part of the foliation $\F_j$ and let $H_1,\ldots ,H_k$ some pairwise distinct prime divisors such that $D_j= \sum_{i=1}^k r_{ij} H_i$ where $r_{ij}\in \Q_{>-1}$. Let $L_j$ be the nef $\Q$ line bundle such that
\[
    N_{ \F_j }^*=L_j + D_j= L_j+ \sum_{i=1}^k r_{ij} H_i
\]

\begin{lemma}\label{L:preparation}
    The following assertions hold true:
	\begin{enumerate}
		\item\label{I:1} for any $j \in \{1, \ldots, p\}$
		\[
		N_{ \F_j }^*=L_j + \sum_{i=1}^k r_{ij} H_i
		\]
		where $L_j$ is a nef $\mathbb Q$ line bundle with $\nu(L_j)\geq 1$;
		\item\label{I:2} the hypersurface $H_i$ is $\F_j$ invariant whenever $r_{ij} \neq 0$;
		\item\label{I:3} the monodromies of the transversely hyperbolic structures of $\F_1, \ldots, \F_p$ around $H_i$ have all the
		same order $n_i=\mathrm{Min}\{m\in\N_{>0}|mr_{ij}\in\Z\}$.
        In particular, $r_{ij} \in \mathbb Z$ for some $j \in \{1, \ldots, p\}$
        if, and only if $r_{ij} \in \mathbb Z$ for all  $j \in \{ 1, \ldots, p\}$.
	\end{enumerate}
\end{lemma}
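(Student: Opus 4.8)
The three assertions are about the decomposition $N^*_{\F_j} = L_j + D_j$ for the pull-backs $\F_j = \rho^* \G_j$ of the tautological foliations. I would treat them in order, exploiting the fact that each $\G_j$ is an honest transversely hyperbolic foliation on the smooth locus of $\D^N/\Gamma$ and that $\rho$ is a morphism, so Proposition \ref{P:behaviorsurj} (and Lemma \ref{L: Pullbackbehavior} when $\rho$ has connected fibers, after passing to a Stein factorization) governs how the decomposition behaves under pull-back.

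\emph{Item (\ref{I:1}).} The decomposition $N^*_{\F_j} = L_j + \sum_i r_{ij} H_i$ with $L_j$ nef is exactly the content of Proposition \ref{P:c1dec} applied to the transversely hyperbolic foliation with quotient singularities $\F_j$ (the $r_{ij} \in \Q_{>-1}$ are its divisorial coefficients). The only new information to extract is the bound $\nu(L_j) \geq 1$ on the numerical dimension. Here I would argue that the transverse hyperbolic form $\eta_j$ on $\D^N/\Gamma$ coming from the $j$-th projection $\D^N \to \D$ is, up to a positive constant, the pull-back of the Poincaré metric, hence a nonzero closed semipositive $(1,1)$-form; its cohomology class represents $L_j$ by Proposition \ref{P:c1dec}(1). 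Since $\rho$ has image of dimension $p \geq 1$ and the $\G_j$ restrict nontrivially to that image (the general-position hypothesis), $\rho^* \eta_j$ is a nonzero closed semipositive form, so its class $L_j$ has numerical dimension at least $1$.

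\emph{Item (\ref{I:2}).} This is immediate from the general theory: Proposition \ref{P:c1dec}(2) (equivalently the remark in Subsection \ref{SS:THF} that $D$ is $\F$-invariant whenever $m_D > 0$) says that any prime divisor appearing with nonzero coefficient in the divisorial part of a transversely hyperbolic foliation is invariant. Applying this to $\F_j$ gives that $H_i$ is $\F_j$-invariant whenever $r_{ij} \neq 0$.

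\emph{Item (\ref{I:3}), the crux.} The statement that the monodromies around $H_i$ all have the \emph{same} order $n_i$ for every $j$ is the real content, and I expect this to be the main obstacle. The point is that the $\F_j$ are not independent foliations but share a common origin: they are all pulled back from the \emph{single} irreducible lattice quotient $\D^N/\Gamma$, and the local monodromy around $H_i$ is governed by the image of a loop $\gamma_i \in \pi_1(U_x - H_i)$ under a single geometric datum — the (local) developing/holonomy data of $\rho$ near the divisor $H_i$. Concretely, near a general point of $H_i$ the map $\rho$ sends a small loop around $H_i$ to a loop in $\D^N/\Gamma$ whose class in $\Gamma \subset \Aut(\D)^N$ has components $g_1, \ldots, g_N$; the monodromy of $\F_j$ around $H_i$ is the image $g_j \in \Aut(\D)$ of the $j$-th component. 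Because $\Gamma$ is an \emph{irreducible} lattice, its projection to any single factor $\Aut(\D)$ is injective (indeed dense), so the element $g = (g_1, \ldots, g_N)$ has finite order if and only if \emph{every} component $g_j$ has finite order, and moreover the orders must all coincide: a torsion element of $\Gamma$ has a well-defined order, and its image in each factor is an elliptic element whose order divides that of $g$, with equality forced by injectivity of each projection. Translating back, the order $n_i = \min\{m \in \N_{>0} \mid m r_{ij} \in \Z\}$ of the rotation number is independent of $j$, and in particular $r_{ij} \in \Z$ for one $j$ (trivial monodromy) forces $g_j = \id$, hence $g = \id$ by injectivity, hence $r_{ij'} \in \Z$ for all $j'$. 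The delicate part will be setting up the local picture cleanly — identifying the monodromy of $\F_j = \rho^*\G_j$ around $H_i$ with the $j$-th component of the $\Gamma$-valued holonomy of $\rho$ along $\gamma_i$, which requires controlling the behavior of $\rho$ near the (possibly singular) fibers and invoking the injectivity of factor projections for irreducible lattices (a consequence of irreducibility together with \cite[Theorem 2]{Shimizu}).
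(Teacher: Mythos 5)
Your proposal is correct and follows essentially the same route as the paper: Items (\ref{I:1}) and (\ref{I:2}) are read off from Proposition \ref{P:c1dec}, and Item (\ref{I:3}) is reduced to the cyclicity of the local isotropy groups of $\D^N/\Gamma$ (via \cite[Theorem 2]{Shimizu}) combined with the injectivity of the factor projections of an irreducible lattice, which forces all components of a torsion element to have the same order. The paper's own proof is much terser --- it attributes the equality of the monodromy orders directly to Shimizu's theorem and does not spell out the verification that $\nu(L_j)\geq 1$ --- so your unpacking of these two points is a faithful expansion of the intended argument rather than a different one.
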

\begin{proof}
	Each of the foliations $\F_j$ is transversely hyperbolic outside its polar locus $H_j$.
    Fix some $j$. From \cite[Theorem 2]{Shimizu},  the local monodromies of the transversely hyperbolic structures of $\F_1, \ldots, \F_p$ around $H_j$  have all the same order,  which is non trivial. It follows  that the $n$ foliations $\F_1,\ldots,\F_p$ share the same polar locus.
	Once we have made this observation, the Lemma directly follows from  Proposition \ref{P:c1dec}.
\end{proof}

\subsection{A big divisor}
In our next statement, we keep the notation used in Lemma \ref{L:preparation}.

\begin{lemma}\label{L:big}
	Assume that $X$ is a projective manifold such that the morphism $\rho$ is generically finite (i.e $p= \mathrm{dim}\ X$).
    Then,  the divisor $L = \sum_{j=1}^p L_j$ is big.
\end{lemma}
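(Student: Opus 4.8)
The plan is to reduce the statement to a single positivity of an intersection number and then extract that positivity from the transverse hyperbolic (Poincaré) structure. Since each $L_j$ is a nef $\Q$-line bundle by Lemma~\ref{L:preparation}, the sum $L=\sum_{j=1}^p L_j$ is again nef on the projective manifold $X$ of dimension $p$. For a nef class on a projective manifold bigness is equivalent to the strict positivity of the top self-intersection number, so it suffices to prove $L^p>0$.

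To compute intersection numbers I would work with the distinguished semipositive representatives. For each $j$, let $\eta_j$ be the closed positive $(1,1)$-current representing $c_1(L_j)$ furnished by Proposition~\ref{P:c1dec}, namely the trivial extension through $H$ of the pull-back by $\rho$ of the Poincaré metric attached to the $j$-th tautological foliation. Each $\eta_j$ is smooth and semipositive on $X-H$, has a continuous plurisubharmonic local potential, zero Lelong numbers, and gives no mass to $H$. By Bedford--Taylor theory the wedge products of the $\eta_j$ are then well defined closed positive measures computing the corresponding cup products, so that
\[
  L_1\cdots L_p=\int_X \eta_1\wedge\cdots\wedge\eta_p=\int_{X-H}\eta_1\wedge\cdots\wedge\eta_p.
\]
On $X-H$ one has $\eta_j=\tfrac{i}{\pi}\exp(2\psi_j)\,\omega_j\wedge\overline{\omega_j}$ for a local generator $\omega_j$ of $N^*_{\F_j}$ and a smooth weight, hence $\eta_1\wedge\cdots\wedge\eta_p$ is a nonnegative multiple of $(\omega_1\wedge\cdots\wedge\omega_p)\wedge\overline{(\omega_1\wedge\cdots\wedge\omega_p)}$. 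The general position hypothesis of Subsection~\ref{SS:preparation} states precisely that $\omega_1\wedge\cdots\wedge\omega_p\not\equiv 0$, so this $(p,p)$-form is a strictly positive smooth volume form on a dense Zariski-open subset. Being an integral of a semipositive form that is strictly positive on a nonempty open set, $\int_X\eta_1\wedge\cdots\wedge\eta_p>0$, and therefore $L_1\cdots L_p>0$.

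Finally I would expand $L^p=\bigl(\sum_{j=1}^p L_j\bigr)^p=\int_X\bigl(\sum_{j}\eta_j\bigr)^p$ multinomially. Every monomial $\eta_{i_1}\wedge\cdots\wedge\eta_{i_p}$ is a wedge of semipositive forms, hence a nonnegative measure, so pointwise $\bigl(\sum_j\eta_j\bigr)^p\ge p!\,\eta_1\wedge\cdots\wedge\eta_p$; integrating gives $L^p\ge p!\,L_1\cdots L_p>0$. Thus $L$ is nef with $L^p>0$ and is consequently big. I expect the only delicate point to be the justification that the wedge products of the non-smooth representatives $\eta_j$ genuinely compute the intersection numbers $L_1\cdots L_p$; this is exactly where the properties recorded after Proposition~\ref{P:conormal} --- that each $\eta_j$ has vanishing Lelong numbers, puts no mass on $H$, and coincides with its absolutely continuous part --- are essential, as they allow the intersection numbers to be read off from the genuine smooth volume form on $X-H$ with no loss of mass.
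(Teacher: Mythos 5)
Your proof is correct, and it isolates the same geometric crux as the paper --- the strict positivity of $\int_{X-H}\eta_1\wedge\cdots\wedge\eta_p$, forced by the general position of the foliations --- but it packages the soft part differently. The paper never passes through nefness of $L$ or through intersection numbers: it invokes Boucksom's volume estimate $v(L)\ge\int_X\eta^p$ (the integral of the pointwise, i.e.\ absolutely continuous, top power of a positive representative), which applies to an arbitrary pseudo-effective class and converts the positivity of $\eta^p$ on $X-H$ directly into $v(L)>0$, hence bigness. Your route instead uses that $L$ is nef together with the criterion that a nef class on a projective $p$-fold is big if and only if $L^p>0$, and must then justify that the Bedford--Taylor products of the singular representatives compute the cup products $L_{i_1}\cdots L_{i_p}$. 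That justification does go through, but be aware that the operative hypothesis is that the $\eta_j$ have locally \emph{bounded} (indeed continuous) potentials of the form $-\log(1-|f|^2)$; vanishing Lelong numbers alone would not license the Bedford--Taylor construction, so the list of properties you cite should be weighted toward the continuity of the potential. Two minor further remarks: since each $\eta_j$ is locally pulled back from a one-dimensional disk it has pointwise rank at most one, so in fact $\eta^p=p!\,\eta_1\wedge\cdots\wedge\eta_p$ exactly (this is implicitly what the paper uses when it identifies the restriction of $\eta^p$ to $X-H$); and your equality $\int_X=\int_{X-H}$ is harmless but unnecessary, since the inequality $\ge$ already yields positivity. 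What the paper's approach buys is independence from the projective intersection-theoretic formalism (consonant with the K\"ahler, non-projective setting used elsewhere in the article); what yours buys is replacing the citation of Boucksom's theorem by more classical tools, at the modest cost of the Bedford--Taylor verification.
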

\begin{proof}
	The nef $\mathbb Q$ divisor $L_j$ have Chern-Hodge classes in $H^1(X,\Omega^1_X)$ represented by
    semi-positive $(1,1)$-form $\eta_j$ obtained by pull-back of the Poincaré metric under distinguished first integrals.  Recall that the $(1,1)$-forms $\eta_i$ are  smooth outside the (common) polar locus of the tranverse strucures , and have $L_{loc}^1$ coefficients on $X$.
	
	Consequently, the Chern class of $\mathbb Q$ line bundle $L=\sum_{j=1}^n L_j$ is represented by a semi-positive $(1,1)$-form
	\[
	   \eta = \sum_{i=1}^p \eta_i
	\]
	with the same type of regularity and we can take its $p$-th power $\eta^p$ which is meant pointwise.
	
	Recall that on a $p$ dimensional complex compact manifold, the volume of a line bundle $E$ is defined as
	\[
        v(E)=\limsup \limits_{k\to \infty}\frac{p!}{k^n}h^0(X,kE) \, .
    \]
	 The line bundle is big, i.e has maximal Kodaira-Itaka dimension $\kappa (E)=p$ exactly when $v(E)>0$ and in that case,
    the lim sup is actually a true limit.  More generally, one can define the volume of a $\Q$ line bundle $E$ as  $v(L):= l^{-p}v(lE)$ where $l$ is a positive integer such that $lE$ is a line bundle. it is easily seen that this definition of volume does not depend of the choice of $E$.
	
	According to \cite[Theorem 1.2]{Boucksom2002}, $v(L)\geq\int_X \eta^p$.
    One concludes by noticing that the right hand side is positive. Indeed, $\eta^p$  restricts on $X-H$ to the smooth and non trivial semi-positive form $(p!) \eta_1 \wedge \cdots \wedge \eta_p $.
\end{proof}

\section{Numerical specialness for rank one subsheaves of $\Omega^1$}\label{numspecial}

\begin{prop}\label{P:corenew}
	Let $\Gamma \subset \Aut(\D)^N$ be an irreducible lattice and
	let $X$ be a compact complex  manifold.  Assume that there exists a morphism $\Psi : X \to \mathbb D^N/\Gamma$
	with image of positive dimension $p$ such that the pull-back $\F= \Psi^*\G$ of one of the tautological foliation $\G$,
    equipped with the pull-back transverse hyperbolic structure, has a $\Q$ effective divisorial part. Then there exists an invertible subsheaf $L\subset \Omega_X^p$  whose Kodaira dimension satisfies $\kappa (L)\geq p$
\end{prop}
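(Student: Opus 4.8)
The goal is to produce, out of the $p$ transversely hyperbolic foliations $\F_1=\F,\F_2,\dots,\F_p$ attached to the $p$ tautological foliations in general position, a single invertible subsheaf of $\Omega_X^p$ with Kodaira dimension at least $p$. The natural candidate is the line bundle $L=\sum_{j=1}^p L_j$, where $N_{\F_j}^*=L_j+D_j$ is the decomposition of Proposition~\ref{P:c1dec}, together with the inclusion $L_1\otimes\cdots\otimes L_p\hookrightarrow N_{\F_1}^*\otimes\cdots\otimes N_{\F_p}^*\to\Omega_X^p$ obtained by wedging the conormal directions. My first step is therefore to write down this wedge map carefully and identify its image as a rank one subsheaf of $\Omega_X^p$ isomorphic to $L$ twisted by the (effective) divisorial parts.

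\emph{Constructing the subsheaf.}
Since the $\F_j$ are in general position, a local generator $\omega_1\wedge\cdots\wedge\omega_p$ of $N_{\F_1}^*\otimes\cdots\otimes N_{\F_p}^*$ does not vanish identically, so the map $\omega_1\otimes\cdots\otimes\omega_p\mapsto\omega_1\wedge\cdots\wedge\omega_p$ defines a nonzero morphism $N_{\F_1}^*\otimes\cdots\otimes N_{\F_p}^*\to\Omega_X^p$. Writing each $N_{\F_j}^*=L_j+D_j$ in $\Pic(X)\otimes\Q$ with $D_j=\sum_i r_{ij}H_i$ the divisorial part, the tensor product contains $L:=\sum_j L_j$ via the section $\prod_{i,j}f_i^{r_{ij}}$ cutting out $\sum_j D_j$; the hypothesis that the divisorial part of $\F=\F_1$ is $\Q$-effective, combined with Lemma~\ref{L:preparation}(\ref{I:3}) (which forces $r_{ij}\in\Z$ simultaneously in all $j$, so that the $D_j$ are effective together), guarantees $\sum_j D_j$ is an effective $\Q$-divisor and hence that $L$ really embeds into $\Omega_X^p$ as a \emph{saturated} rank one coherent subsheaf.

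\emph{Estimating the Kodaira dimension.}
It then remains to show $\kappa(L)\ge p$. Each $L_j$ is a nef $\Q$-line bundle whose first Chern class is represented by the semipositive $(1,1)$-form $\eta_j$ coming from the pull-back of the Poincaré metric, and by Lemma~\ref{L:preparation}(\ref{I:1}) one has $\nu(L_j)\ge1$. The key numerical input is the volume estimate $v(L)\ge\int_X\eta^p$ from \cite[Theorem~1.2]{Boucksom2002}, exactly as in the proof of Lemma~\ref{L:big}, with $\eta=\sum_j\eta_j$; on $X-H$ the top power $\eta^p$ restricts to $p!\,\eta_1\wedge\cdots\wedge\eta_p$, which is a nontrivial semipositive form precisely because the foliations are in general position, so $\int_X\eta^p>0$ and thus $v(L)>0$, giving $\kappa(L)=p$.

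\emph{Main obstacle.}
The delicate point is passing from the numerical positivity $\int_X\eta^p>0$ to an honest statement about the \emph{saturation} $L$ inside $\Omega_X^p$, rather than about an abstract nef class: one must ensure that the global sections produced by bigness of $\sum_j L_j$ actually give sections of the image subsheaf of $\Omega_X^p$, i.e. that the effective correction by $\sum_j D_j$ does not destroy positivity. This is where $\Q$-effectivity of the divisorial part is essential — it lets one absorb the $D_j$ into effective divisors and conclude $\kappa(L)=v\text{-}\dim\ge p$. A minor subtlety, in the non-projective K\"ahler setting, is that Lemma~\ref{L:big} was stated for $X$ projective and generically finite $\rho$; here I instead work on the image of dimension $p$ and pull back the bigness along a generically finite model, which is the role of $p=\dim\,\rho(X)$ rather than $\dim X$.
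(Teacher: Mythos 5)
There is a genuine gap at the heart of your construction: the claim that $\sum_j D_j$ is effective. Lemma~\ref{L:preparation}(\ref{I:3}) only says that the coefficients $r_{ij}$ are integral for some $j$ if and only if they are integral for all $j$; it says nothing about their signs. The hypothesis of the proposition gives $\Q$-effectivity of the divisorial part of $\F=\F_1$ only, i.e.\ $r_{i1}\ge 0$, while for $j\ge 2$ the coefficients $r_{ij}$ are merely constrained to lie in $\Q_{>-1}$ and can perfectly well be negative (they come from distinguished first integrals of the form $z^{r_{ij}+1}$ with $0<r_{ij}+1<1$). So $\sum_j D_j$ need not be effective, the twist by $\prod_{i,j}f_i^{r_{ij}}$ need not be a holomorphic section, and $L=\sum_j L_j$ does not embed into $\bigotimes_j N^*_{\F_j}$ the way you assert. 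This is exactly the point where the paper's proof uses an ingredient absent from yours: the \emph{tangency divisor}. The saturation of the image of the wedge map $\sigma$ is not $\bigotimes_j N^*_{\F_j}$ but $\bigotimes_j N^*_{\F_j}\otimes\mathcal O_X(\tang(\F_1,\dots,\F_p))$, and since the hypersurfaces $H_i$ ($i\le k'$) are invariant by \emph{all} the foliations, $\tang(\F_1,\dots,\F_p)\ge(p-1)\sum_{i\le k'}H_i$. The resulting coefficient on $H_i$ is $p-1+\sum_{j=1}^p r_{ij}=r_{i1}+\sum_{j=2}^p(1+r_{ij})$, which is nonnegative precisely because $r_{i1}\ge 0$ and each $r_{ij}>-1$ (Inequality~(\ref{E:nef})). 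Without the tangency divisor this compensation is unavailable and the argument fails.

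A secondary problem is your use of the volume estimate $v(L)\ge\int_X\eta^p$ directly on $X$: when $\dim X=n>p$ the relevant power for the volume is $\eta^n$, which vanishes since each $\eta_j$ has numerical dimension controlled by the $p$-dimensional image; so $v(L)>0$ cannot hold on $X$ and would anyway give $\kappa(L)=n$, not $p$. The paper avoids this by first passing to a smooth projective model $V$ of the $p$-dimensional image, where the map is generically finite and Lemma~\ref{L:big} applies, proving $\sum_j L_j'$ is big on $V$, and then pulling back along the dominant morphism $\varphi:X\to V$ (using $L_j=\varphi^*L_j'$ from Proposition~\ref{P:behaviorsurj}) to get $\kappa(\sum_j L_j)\ge p$ on $X$. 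You gesture at this in your last sentence, but it needs to replace, not supplement, the computation of $\int_X\eta^p$ on $X$ itself.
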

\begin{proof}
	Let $V$ be a smooth projective model of the image of $\Psi$ determined by some birational morphism
    $\rho: V\to \mathrm{Im}\ \Psi\subset \D^N/\Gamma$.
	Retaining the notations of Subsection \ref{SS:preparation}, we have on $V$, $p=\mathrm{dim}\ V$ foliations in general position:
    $\mc H_j=\rho^*\G_j$. One can moreover assume that $\G_1=\G$.
	
	The conormal of each of these foliations splits as
	\[
	   N_{ \mc H_j }^*=L_j' + D_j'
	\]
	such that $D_j'$ is the divisorial part of $\mc H_j$.
	We also have  from Lemma \ref{L:big} that $\sum L_j'$ is a big $\Q$ divisor.
	
	Note also, using for instance Proposition \ref{P:behaviorsurj},
    that the divisorial part of $\F$ under pull-back by any birational morphism remains effective.
    Then, up to performing some blows up on $X$, one can suppose that $\rho$ factors through a dominant morphism $\varphi: X\to V$.
	
	Set $\mc F_j=\varphi^*\mc H_j$, $j=1,\ldots,p=\mathrm{dim}\ V$, so that $\F=\F_1$.
	Consider the decomposition
	\[
	   N_{ \F_j }^*=L_j + D_j= L_j+ \sum_{i=1}^k r_{ij} H_i
	\]
	as given in Subsection \ref{SS:preparation}.
	
	After renumbering the hypersurfaces $H_i$, one can assume  by Lemma \ref{L:preparation},
    we can assume existence of a integer $k' \le k$ such that
	$r_{ij} \in \Q - \Z$   for any $j \in \{ 1, \ldots, p\}$ if and only if $i\leq k'$. We can thus write, for any $j$,
	\[
	   \sum_{i=1}^k r_{ij} H_i = \sum_{i=1}^{k'} r_{ij} H_i + R_j
	\]
	where $R_j$ is an effective divisor. By assumption on the divisorial part of $\F$,  we have that $r_{i1}>0$ for every
	$i \in \{ 1, \ldots, k\}$. Therefore we can write
	\begin{equation}\label{E:nef}
	\sum_{i=1}^{k'} \left( p-1 + \sum_{j=1}^p r_{ij} \right) H_i = \sum_{i=1}^{k'} \left( r_{i1} +  \sum_{j=2}^p (1+ r_{ij}) \right) H_i \ge 0
	\end{equation}
	thanks to  \ref{SS:preparation}.
	
	Consider the morphism
	\begin{align*}
	   \sigma :  N^*_{\F_1} \otimes \cdots \otimes N^*_{\F_p} &\longrightarrow \Omega^p_X \\
	   \omega_1 \otimes \cdots \otimes \omega_n & \mapsto \omega_1 \wedge \cdots \wedge \omega_p .
	\end{align*}
	The saturation of the image of $\sigma$ is an invertible subsheaf $\mathcal O_X(F)  \subset \Omega^p_X$ isomorphic
	to
	\[
	   N^*_{\F_1} \otimes \cdots \otimes N^*_{\F_p} \otimes \mathcal O_X(\tang(\F_1, \ldots, \F_p)) ,
	\]
	where $\tang(\F_1, \ldots, \F_p)$ is the tangency divisor of the foliations $\F_1, \ldots, \F_p$.
	Since the hypersurfaces $H_1, \ldots, H_{k'}$ are invariant by all the foliations $\F_j$, it follows
	that $\tang(\F_1, \ldots, \F_p) \ge (p-1) \sum_{i=1}^{k'} H_i$.
	
	Another important point is that $L_j=\varphi^* (L_j')$ according to Proposition \ref{P:behaviorsurj}.
    From Lemma \ref{L:big} and because $\varphi$ is dominant, one deduces that the Kodaira dimension of
	$L=\sum L_j$ is at least $p$.
	Therefore
	\[
	   \displaystyle{  F \quad \ge  \quad
        \underbrace{\sum_{i=1}^{k'} \left( n-1 + \sum_{j=1}^p r_{ij} \right) H_i}_{\text{ $\mathbb Q$ effective according to Inequality (\ref{E:nef})}} \qquad   +  \underbrace{\sum_{j=1}^p L_j \, ,}_{\text{$\kappa( \sum_{j=1}^p L_j )\ge p $}} \, }
	\]
	and  $F$ can be written as the sum of a nef  $\mathbb Q$ line bundle $L$ of Kodaira dimension $\geq p$
	and an effective $\mathbb Q$ divisor. It follows that $\kappa (F)\geq p$. 
\end{proof}
Let $X$ a $n$ dimensional compact K\"ahler manifold equipped with a codimension one foliation $\F$ whose conormal sheaf $N_\F^*$ is pseudo-effective. Denote by $\nu$ (resp. $\kappa$) the numerical (resp. Kodaira) dimension of $N_\F^*$.  Theorem \ref*{TH:conormalvshyperbolic} guarantees that the Zariski decomposition of $c_1(N_\F^*)$ reads as
\[
    c_1(N_\F^*)=N +Z
\]
where $N$ is a $\Q$ effective divisor such the intersection matrix $(N_i \cdot N_j \cdot \Theta^{n-2})$  is negative definite, where the $N_i$'s are the irreducible components of $\Supp(N)$, $Z$ is a \textit{nef} class and $\Theta$ any K\"ahler class.

According to  \cite[Theorem 4] {MR3644247},  there are three possible cases according to the value of $\nu$ (defined by $Z^\nu\not=0, Z^{ \nu+1}=0$)
\begin{enumerate}
	\item $\nu=0=\kappa$
	\item $\nu=1=\kappa$\label{I:algint}
	\item $\nu=1$, $\kappa=-\infty$ ( the  non abundant case )\label{I:dense}
\end{enumerate}

The two last cases also strongly differ from the dynamical viewpoint (see  \cite[loc;cit]{MR3644247}): in the first situation \ref{I:algint},
$\F$ is algebraically integrable, in the second one \ref{I:dense}, the foliation is quasi minimal (all leaves, exceptely finitely many are  dense for the euclidean topology). We are interested in the last situation where abundance does not hold.

\subsection{Proof of Theorem \ref{THM:D}}

	Assume for a while that $X$ is \textit{projective}. By \cite[Theorem 6]{MR3644247} and also taking into account Remark \ref{R:KLT},
    there exists a morphism $\Psi: X\to \D^N/\Gamma$ whose image has dimension $\geq 2$ such that $\F=\Psi^*\G$ where $\G$ is one of the tautological  foliation on $\D^N/\Gamma$.
    Moreover, the tranverse hyperbolic structure of $\F$ as described in Theorem \ref*{TH:conormalvshyperbolic} is
    obtained by pulling-back via $\Psi$ of the natural transverse hyperbolic structure of $\G$. This is implicitely proven in  (\cite[Section 6]{MR3644247}, especially p. 22,23) but one can also invoke the uniqueness property stated in Proposition \ref{P:uniqueness}.
	
	Let us now consider the general case of compact K\" ahler manifolds.
    Up to renumbering the components $N_i$ of the negative part $N$,
    one can assume that for some $q\in \N$, $\lambda_i\in \Q-\N$ for $i=1, \ldots,q$, $\lambda_i\in \N_{>0}$ for $i>q$. Set $N'= \sum_{i=1}^q \lambda_i N_i$.  Let $\rho: \pi_1 (X-\mathrm{Supp}(N'))\to \Aut (\D)$ the monodromy representation of the transverse hyperbolic structure.  According to \cite[Proposition 4.6]{MR3644247}, the image of $\rho$ is Zariski dense (actually dense in the euclidean topology). By Selberg's Lemma, there exists a finite index torsion free normal subgroup in the image of $\rho$. This enables to construct a finite Galoisian   cover $R: \hat X\to X$  with  branch locus $\mathrm{Supp}(N')$ such that the pull-back representation $R^*\rho$ is actually well defined as a morphism $\pi_1(\hat X)\to \Aut (\D)$ with torsion free image. According to \cite[Theorem 1]{Vajaitu96}, $\hat X$ is a K\"ahlerian analytic space and then, by Hironaka \cite{Hironaka77} admits a resolution of singularites which is a compact K\"ahler manifold $Y$. We have thus construct a surjective morphism with generically finite fibers $\psi:Y\to X$ between compact K\"ahler manifolds such the pull-back foliation $\F_Y:=\psi^*\F$ is transversely hyperbolic without poles. The associated monodromy representation is nothing but $\rho_Y:=\psi^*\rho: \pi_1(Y)\to \Aut(\D)$ with dense and torsion free image. Note that one can prove than $\rho_Y$ (and equivalently $\rho$) has Zariski dense image in $\Aut(\D)$ without resorting to \cite[Proposition 4.6]{MR3644247}. Indeed, let $\tilde Y$ be the universal covering of $Y$. Denote by $f: \tilde Y\to \D$ be the $\rho$ equivariant holomorphic obtained by developping the transverse hyperbolic structure of $\F_Y$. Because $Y$ is K\" ahler, $f$ is also harmonic and the Zariski density of the image of $\rho$ follows  from \cite{Corlette88,Labourie91} (regarding $\D$ as a symmetric space of the non compact type).

    By \cite{MR1384908, MR3314830} and up to taking a bimeromorphic smooth model of $Y$,  $\rho_Y$ factors through $\rho_V:\pi_1(V)\to \Aut (\D)$  via a surjective morphism $e:Y\to V$ with connected fibers, where $V$ is a projective manifold of the general type.

    In particular $e$ factors through the algebraic reduction map ${red}_Y : Y\to {\Red} (Y) $ (here and henceforth, we will assume, up to taking appropriate smooth models, that all algebraic reduction spaces are projective manifolds and all reduction maps are morphisms). By Lemma \ref{L:foldescent}, there exists on   ${\Red} (Y)$ a transversely hyperbolic foliation $\F_1$ such that $\F_Y= {red}_Y^*\F_1$, and such the monodromies representations factor accordingly.

    Observe also that the group of deck transformations of the Galoisian cover $\hat X\to X$ induces on $Y$ a finite group of bimeromorphic transformations $G$ preserving the foliation $\F_Y$. Consider the action $G\times \C (Y)\to\C (Y)$ defined by $g \cdot f=f\circ g^{-1} $ and denote by $K$ the kernel of this action. By the very definition of ${\Red} (Y)$, $G$ acts on $Y$ by preserving the fibration of the reduction map and the induces a faithful action of $G_1:=G/H$ on $ {\Red} (Y)$ by birational transformations also preserving the foliation $\F_1$. By construction, observe also  that the field of rational functions of ${\Red} (X)$ is precisely $\C(X)= { \C (Y)}^G$. This implies that there exists on ${\Red} (X)$  a foliation, $\F_2$ such that $\F_1=r_{Y,X}^*\F_2$ where $r_{Y,X}:{\Red} (Y)\to {\Red} (X)$ is the rational map induced by the inclusion $\C(X)\subset \C (Y)$ and then makes the following diagram commute, up to replacing $Y$, $\Red(Y)$, $\Red(X)$, and $V$ by suitable non singular models.
    \begin{center}
    \begin{tikzpicture}
    \matrix (m) [matrix of math nodes,row sep=3em,column sep=4em,minimum width=2em]
    {
	( Y,\F_Y) & ( \Red(Y),\F_1)&V \\
	( X,\F) & ( \Red (X),\F_2)& \\};
    \path[-stealth]
    (m-1-1) edge node [left] {$\psi$} (m-2-1)
    edge  node [above] {${ red}_Y$} (m-1-2)
    (m-1-2)edge node [above]{$e'$} (m-1-3)
    (m-1-1) edge [bend left] node [above] {$e\;$} (m-1-3)
    (m-2-1.east|-m-2-2)
    edge node [above] {${red}_X$} (m-2-2)
    (m-1-2) edge [dashed] node [right] {${r}_{Y,X}$} (m-2-2);
    \end{tikzpicture}
    \end{center}

    In particular, we have that $\F= {red}_X^*\F_2$. According to Lemma \ref{L: Pullbackbehavior}, $\F_2$ admits a transversely hyperbolic structure with quotient singularities compatible with that of $\F$. Applying Theorem  \ref{THM:D} in the projective case, one deduces that $\F$ is obtained by pull-back of a tautological foliation on a polydisk quotient also compatible with the transverse hyperbolic structure (Proposition \ref{P:uniqueness}). The proof is thus complete.
\qed

\begin{thm}\label{T:firsthalf}
	Let $(X,\F)$ a foliated compact K\" ahler manifold with $\mathrm{codim}(\F)=1$.
    Assume that $N_\F^*$ is pseudo-effective and not abundant in the sense defined above. Then there exists an invertible subsheaf $L\subset \Omega_X^p$ for some $p\geq 2$ such that $\kappa (L)=p$.
\end{thm}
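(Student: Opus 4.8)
The plan is to realize $\F$ as the pull-back of a tautological foliation on an irreducible polydisc quotient, so that the construction of Proposition \ref{P:corenew} produces the desired Bogomolov sheaf, and then to descend everything from the auxiliary modification back to $X$.

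First I would unwind the hypotheses. Since $N_\F^*$ is pseudo-effective and not abundant, the trichotomy recalled above forces $\nu(N_\F^*)=1$ and $\kappa(N_\F^*)=-\infty$; in particular we sit in the quasi-minimal case, so $\F$ is \emph{not} algebraically integrable. As $N_\F^*$ is pseudo-effective of numerical dimension one, Theorem \ref{TH:conormalvshyperbolic} applies and equips $\F$ with a transversely hyperbolic structure with quotient singularities on $X$ whose divisorial part is the negative part $N=\sum_i \lambda_i N_i$ of the Zariski decomposition of $c_1(N_\F^*)$. Because the coefficients $\lambda_i$ are positive rational numbers, this divisorial part is $\Q$-effective.

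Next I would invoke Theorem \ref{THM:D}. Since $\F$ is a codimension one transversely hyperbolic foliation with quotient singularities that is not algebraically integrable, up to replacing $X$ by a nonsingular K\"ahler modification $\pi\colon \hat X\to X$ there is a morphism $\Psi\colon \hat X\to \D^N/\Gamma$ whose image has dimension $p\geq 2$ with $\pi^*\F=\Psi^*\G$ for a tautological foliation $\G$, the pulled-back transverse structure agreeing with that of $\pi^*\F$. To feed this into Proposition \ref{P:corenew} I must check that the divisorial part of $\pi^*\F$ (for the pull-back transverse structure) is $\Q$-effective: by Proposition \ref{P:behaviorsurj} it equals $\pi^*N+I$, where $I$ is the effective $\pi^*\F$-invariant ramification divisor, hence it is $\Q$-effective. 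Proposition \ref{P:corenew} then yields an invertible subsheaf $\hat L\subset \Omega_{\hat X}^p$ with $\kappa(\hat L)\geq p$; as $p$ is the maximal Kodaira dimension attainable by a rank one subsheaf of $\Omega_{\hat X}^p$, we conclude $\kappa(\hat L)=p$.

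Finally I would descend from $\hat X$ to $X$. Since $\pi$ is a bimeromorphism of smooth compact K\"ahler manifolds one has $\pi_*\Omega_{\hat X}^p=\Omega_X^p$, so the saturation $L$ of $\pi_*\hat L$ is a rank one reflexive, hence invertible, subsheaf of $\Omega_X^p$. The spaces of holomorphic tensors $H^0(\hat X,(\Omega_{\hat X}^p)^{\otimes m})$ and $H^0(X,(\Omega_X^p)^{\otimes m})$ are identified by $\pi^*$, and under this identification the pluri-sections of $\hat L$ correspond to those of $L$, whence $\kappa(L)=\kappa(\hat L)=p$. The genuinely delicate content is entirely concentrated in Theorem \ref{THM:D} and in the construction behind Proposition \ref{P:corenew}; the only points requiring care in assembling the present statement are the verification of $\Q$-effectivity after modification and this last bimeromorphic descent, which I expect to be the main (though mild) obstacle.
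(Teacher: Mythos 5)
Your argument follows essentially the same route as the paper's proof: the effective divisorial part coming from Theorem \ref{TH:conormalvshyperbolic} and Proposition \ref{P:c1dec}, its invariance under bimeromorphic pull-back (Proposition \ref{P:behaviorsurj}), then Theorem \ref{THM:D}, Proposition \ref{P:corenew}, and Bogomolov's upper bound. The paper's version is terser and leaves the descent from the modification back to $X$ implicit, but your added verifications are correct and the content is identical.
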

\begin{proof}
    The decomposition provided by Proposition \ref{P:c1dec} takes the form
    \begin{equation}\label{E:decpsef}
        N_\F^*= L+D
    \end{equation}
    where the divisorial part is \textit{effective}.  This property is clealy invariant under pull-back by bimeromorphic morphisms, so that one can apply Theorem  \ref{THM:D}.  From Proposition \ref{P:corenew}, one derives the existence of an invertible subsheaf $L\subset \Omega_X^p$ for some $p\geq 2$ such that $\kappa (L)\geq p$ hence $=p$ by Bogomolov's upper bound.
\end{proof}	

\begin{rem}
    In general, one cannot deduce non-specialness of $X$ just from the existence of a proper morphism to an irreducible quotient of a polydisc.
    Indeed in \cite[Theorem 12.1]{Gra02} Granath produces examples of rational and $K3$-surfaces obtained as minimal resolutions of singular compact quotients $\mathbb D^2/\Gamma$.
\end{rem}

\begin{rem}
    Another way to interpret the preceding proof is the following. Considering as before $V$ a smooth projective model of the image of $\Psi$, $V$ is equipped with a natural divisor $\Delta=\sum_i (1-\frac{1}{m_i})D_i$ supported over $\Sing (\D^N/\Gamma)$. It follows from \cite{CDG19} that $K_V+\Delta$ is big. As above, up to performing some blowups on $X$, we have a dominant morphism $\varphi: X\to V$. Then it follows from the proof of Theorem~\ref{T:firsthalf} that $\varphi: X\to (V, \Delta)$ is an \emph{orbifold morphism} in the sense of Campana i.e. $\varphi$ ramifies over $D_i$ with multiplicity at least $m_i$. This implies that $\varphi^*(K_V+\Delta) \subset \Omega_X^p$ is a Bogomolov sheaf where $p:=\dim V$.
\end{rem}

\subsection{Proof of Theorem \ref{THM:A}}
    The result is obvious  if $\kappa(L)>0$ (hence $=1$). Otherwise, this is Theorem \ref{T:firsthalf}.
\qed

\section{Entire curves}\label{ec}

First, remark that in the case  $\nu=1=\kappa$, we have a Bogomolov sheaf $L\subset \Omega_X$ which corresponds (see \cite{Ca04}) to a fibration of general type $F: X \to C$ onto a curve. This means that the orbifold base of the fibration $(C, \Delta)$ is of general type. It is now a classical fact \cite{Nev70} that in this setting, for any entire curve $f:\C \to X$, $F\circ f: \C \to C$ has to be constant. Therefore $f:\C \to X$ cannot be Zariski dense.

So, now we deal with the non-abundant case $\nu=1$, $\kappa=-\infty$.

\begin{thm}\label{degen}
    Let $(X,\F)$ a foliated compact K\" ahler manifold with $\mathrm{codim}(\F)=1$. Assume that $N_\F^*$ is pseudo-effective and not abundant.
	Then any entire curve $f: \C \to X$ is algebraically degenerate i.e., $f(\C)$ is not Zariski dense.
\end{thm}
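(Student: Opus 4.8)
The plan is to reduce to the structural picture furnished by Theorem \ref{THM:D}. By hypothesis $N_\F^*$ is pseudo-effective with numerical dimension one (the non-abundant case $\nu=1$, $\kappa=-\infty$), so by Theorem \ref{TH:conormalvshyperbolic} the foliation $\F$ admits a transversely hyperbolic structure with quotient singularities, and since $\kappa=-\infty$ the foliation is not algebraically integrable. Theorem \ref{THM:D} then provides, after replacing $X$ by a smooth K\"ahler modification $\pi:\hat X\to X$, a morphism $\Psi:\hat X\to \D^N/\Gamma$ whose image has dimension $p\geq 2$ with $\F=\Psi^*\G$ for one of the tautological foliations $\G$, the transverse hyperbolic structures being compatible. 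Since algebraic degeneracy of entire curves is insensitive to bimeromorphic modification (an entire curve on $X$ lifts to $\hat X$ outside the indeterminacy locus, or one argues on $\hat X$ and pushes forward), it suffices to prove the statement on $\hat X$; so I would replace $X$ by $\hat X$ and assume from the outset that we have such a map $\Psi$.

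The key idea is that the developing map of the transverse hyperbolic structure obstructs Zariski density. Let $f:\C\to X$ be an entire curve. First I would dispose of the degenerate possibility that $f(\C)$ is tangent to $\F$: if $f$ is a leaf (or contained in $\Supp H$, the invariant divisor of poles) it is automatically not Zariski dense since $p\geq 2$ forces the generic leaf, hence $f(\C)$, into a proper subvariety. So assume $f$ is generically transverse to $\F$. Then $\Psi\circ f:\C\to \D^N/\Gamma$ is nonconstant in the $\G$-transverse direction, and composing with the projection defining $\G$ yields, after lifting to the universal cover, a \emph{bounded} holomorphic map $h:\C\to\D$ coming from the distinguished first integral of $\F=\Psi^*\G$. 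Concretely, the local distinguished first integrals of $\F$ take values in $\D$, and the entire curve develops into a holomorphic map $\C\to\D$ which must be constant by Liouville's theorem (equivalently, $h^*(\text{Poincar\'e metric})=f^*\eta$ integrates the Ahlfors–Schwarz lemma to force $f^*\eta\equiv 0$). Hence $\Psi\circ f$ maps $\C$ into a single leaf of $\G$, contradicting generic transversality unless $f(\C)$ lies in a proper subvariety.

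To make this rigorous I would work on the K\"ahler modification $Y$ of Theorem \ref{THM:D}'s proof where $\F_Y=\psi^*\F$ is transversely hyperbolic \emph{without poles} with developing map $\tilde\varphi:\tilde Y\to\D$ equivariant for the dense, torsion-free monodromy $\rho_Y:\pi_1(Y)\to\Aut(\D)$. Lifting $f$ to $\tilde f:\C\to\tilde Y$ (possible after reparametrizing on the universal cover, since $\C$ is simply connected) and setting $h=\tilde\varphi\circ\tilde f:\C\to\D$, the Ahlfors–Schwarz lemma applied to $h$ gives that $h$ is constant; therefore $f(\C)$ lies in a fiber of the developing map, i.e.\ in a leaf of $\F$. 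Because $\F$ is quasi-minimal with generic leaf dense, being confined to one leaf does not by itself give Zariski degeneracy, so the genuine content is to upgrade ``$f(\C)$ lies in a single leaf'' to ``$f(\C)$ is not Zariski dense''. Here I would use the factorization through the polydisc quotient: $\Psi\circ f$ lands in a fiber of the $\G$-projection $\D^N/\Gamma\to\D$, which is a proper subvariety of $\mathrm{Im}\,\Psi$ of dimension $p-1$, and pulling back, $f(\C)\subset\Psi^{-1}(\text{proper subvariety})$, a proper subvariety of $X$. The main obstacle I anticipate is precisely this last step: ensuring that the $\F$-leaf containing $f(\C)$ is genuinely algebraic/proper rather than merely a dense transcendental leaf — it is the polydisc-quotient structure (the existence of the \emph{other} tautological foliations $\G_2,\dots,\G_N$ and the irreducibility of $\Gamma$) that rescues this, since the fiber of a single projection is algebraic in $\D^N/\Gamma$, and I would spend the most care verifying that the image of $\Psi\circ f$ is contained in such an algebraic fiber and transporting that back to $X$.
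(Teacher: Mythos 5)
Your skeleton agrees with the paper's: an Ahlfors--Schwarz argument against the transverse hyperbolic metric forces every entire curve to be tangent to $\F$ (this is the paper's Lemma \ref{entirecurve}), and then one must exploit the factorization $\F=\Psi^*\G$ from Theorem \ref{THM:D}. But the step you yourself single out as the crux --- upgrading ``$f(\C)$ lies in a single leaf'' to algebraic degeneracy --- is resolved incorrectly. There is no projection $\D^N/\Gamma\to\D$ when $\Gamma$ is an \emph{irreducible} lattice: the first-factor projection of $\Gamma$ is a dense, non-discrete subgroup of $\Aut(\D)$, so the images in $\D^N/\Gamma$ of the fibers $\{z\}\times\D^{N-1}$ are dense leaves, not algebraic subvarieties. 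Your claim that ``the fiber of a single projection is algebraic in $\D^N/\Gamma$'' would make $\G$, hence $\F$, algebraically integrable, contradicting the standing non-abundance hypothesis; the same error underlies your earlier quick dismissal of the tangent case (``$p\geq 2$ forces the generic leaf into a proper subvariety''), since in the quasi-minimal situation the generic leaf is Zariski dense.

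The correct resolution, which is the one the paper uses, is leafwise hyperbolicity rather than leafwise algebraicity: a leaf of $\G$, viewed as an abstract complex manifold with its leaf topology, is a quotient of $\D^{N-1}$ by the stabilizer in $\Gamma$ of the corresponding fiber, hence Brody hyperbolic (\cite[Proposition 3.1]{rousseautouzet15}). Therefore $\Psi\circ f:\C\to\D^N/\Gamma$, being tangent to $\G$, is \emph{constant} --- not merely confined to a leaf --- and $f(\C)$ is contained in a fiber of $\Psi$, which is a genuine proper analytic subvariety of $X$ because $\dim\mathrm{Im}\,\Psi=p\geq 2>0$. A secondary caveat: your rigorous version of the tangency step lifts $f$ through the generically finite cover $Y\to X$ constructed in the proof of Theorem \ref{THM:D}, but that cover is ramified, so an entire curve on $X$ need not lift; the paper avoids this by running Ahlfors--Schwarz directly on $X$ against the singular transverse metric of Theorem \ref{TH:conormalvshyperbolic}.
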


We shall start with a lemma.

\begin{lemma}\label{entirecurve}
	Let $(X,\F)$ a foliated compact K\" ahler manifold with $\mathrm{codim}(\F)=1$. Assume that $N_\F^*$ is pseudo-effective and not abundant.
	Then any entire curve $f: \C \to X$ is tangent to $\F$.
\end{lemma}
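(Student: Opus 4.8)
The plan is to deduce the tangency statement from Theorem \ref{THM:D}, converting it into the Brody hyperbolicity of an irreducible polydisc quotient. Since $N_\F^*$ is pseudo-effective and not abundant, we are in case (3) above ($\nu=1$, $\kappa=-\infty$), so by Theorem \ref{TH:conormalvshyperbolic} the foliation $\F$ carries a transversely hyperbolic structure with quotient singularities; moreover $\F$ is not algebraically integrable, algebraic integrability being precisely the abundant case (2). Theorem \ref{THM:D} therefore applies: after replacing $X$ by a non-singular Kähler modification $\pi:\hat X\to X$, the pulled-back foliation $\hat\F=\pi^*\F$ is of the form $\hat\F=\Psi^*\G$ for a morphism $\Psi:\hat X\to \D^N/\Gamma$ with image of dimension $p\geq 2$ and $\G$ a tautological foliation. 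Since tangency to $\F$ is unaffected by the modification, it is enough to treat $\hat\F$.

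First I would lift the entire curve. Given $f:\C\to X$, the meromorphic map $\pi^{-1}\circ f:\C\dashrightarrow \hat X$ has source a smooth curve, hence empty indeterminacy locus (indeterminacy has codimension $\geq 2$), so it is an honest morphism $\hat f:\C\to \hat X$ lifting $f$. As $\pi$ is an isomorphism off a proper analytic subset, $f$ is tangent to $\F$ if and only if $\hat f$ is tangent to $\hat\F$; thus it suffices to prove that $\hat f$ is tangent to $\hat\F=\Psi^*\G$.

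Next, consider $g=\Psi\circ \hat f:\C\to \D^N/\Gamma$. The crucial input is that $\D^N/\Gamma$ is Kobayashi (hence Brody) hyperbolic: being the quotient of the bounded domain $\D^N$ by a properly discontinuous group of automorphisms, its Kobayashi pseudodistance is a genuine distance, so every holomorphic map from $\C$ (which carries the identically-zero Kobayashi pseudodistance) must be constant. Consequently $g$ is constant, i.e.\ $\hat f(\C)$ lies in a single fiber of $\Psi$. Because the tangent directions to the fibers of $\Psi$ lie in $\ker d\Psi\subset T_{\Psi^*\G}$, the curve $\hat f$ is tangent to $\hat\F$, and therefore $f$ is tangent to $\F$, as desired.

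The main obstacle is the careful treatment of the various small loci: one must verify that the lift $\hat f$ is a true entire curve (extension across the discrete set $\hat f^{-1}(\Exc(\pi))$), and, more delicately, that the hyperbolicity argument for $g$ is not spoiled by the finitely many quotient singularities $\Sigma\subset \D^N/\Gamma$ that $g(\C)$ may cross — that is, that $g$ restricted to $\C-g^{-1}(\Sigma)$, which lands in the smooth quotient covered by an open subset of $\D^N$, is still forced to be constant and then extends. A self-contained alternative, bypassing Theorem \ref{THM:D}, is to work directly with the transverse Poincaré metric: were $f$ not tangent to $\F$, the pull-back $f^*\eta$ would be a non-trivial pseudo-metric on $\C$ of constant Gaussian curvature $-1$ off a discrete set, which the Ahlfors–Schwarz lemma forbids; in that approach the subtle point becomes the behaviour of $f^*\eta$ across $f^{-1}(H)$, where $\eta$ picks up the ramification contributions recorded in Proposition \ref{P:c1dec}, and one must check these do not invalidate the negative curvature bound.
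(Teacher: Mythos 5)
Your argument is correct in substance but takes a genuinely different, and heavier, route than the paper. The paper does not invoke Theorem \ref{THM:D} in this lemma at all: it takes the singular transverse metric $h$ of constant curvature $-1$ furnished by \cite{MR3644247}, Theorem 3.1 (cf.\ Theorem \ref{TH:conormalvshyperbolic}), notes that a curve not tangent to $\F$ pulls $h$ back to a non-trivial singular metric $\gamma$ on $\C$ with $-\Ric\gamma\geq\gamma$ in the sense of currents, and concludes $\gamma\equiv 0$ by the Ahlfors--Schwarz lemma --- this is exactly the ``self-contained alternative'' you sketch in your final paragraph, and the caveat you raise there about the divisorial contributions is resolved because the local weights of $h$ are plurisubharmonic, so the curvature inequality survives as a current inequality. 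Your primary route instead factors through the structure theorem and actually proves the stronger statement that $\Psi\circ\hat f$ is constant, i.e.\ essentially Theorem \ref{degen} and Theorem \ref{THM:B} in one stroke rather than just tangency. The price is the Brody hyperbolicity of $\D^N/\Gamma$: the assertion that a quotient of a bounded domain by a properly discontinuous group has a genuine Kobayashi distance is true but not formal at the fixed points of $\Gamma$ (holomorphic disks through the quotient singularities need not lift to $\D^N$), and the standard justification is again an Ahlfors--Schwarz estimate with the descended Bergman metric --- note that the paper, in its proof of Theorem \ref{THM:B}, deliberately uses only the hyperbolicity of the \emph{leaves} of the tautological foliations (\cite{rousseautouzet15}, Proposition 3.1), for which tangency must already be known; its order of argument (tangency first via the transverse metric, then constancy of $\Psi\circ f$ leafwise) is designed to avoid the global hyperbolicity statement you rely on and to keep the lemma independent of Theorem \ref{THM:D}. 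Both routes work; yours buys algebraic degeneracy immediately at the cost of that extra input, the paper's is lighter and more local.
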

\begin{proof}
    From \cite{MR3644247} Theorem 3.1 (see also Theorem \ref{TH:conormalvshyperbolic}), we have a singular transverse metric $h$ with curvature current $\Theta_h=-(h+[N])$. It is a smooth transverse metric of constant curvature $-1$ on $X\setminus (\Sing(\F) \cup \Supp N)$.
    Suppose $f: \C \to X$ is not tangent to $\F$. In particular, $f(\C) \not\subset \Sing(\F) \cup H$. Therefore $f^*h$ induces a non-zero singular metric $\gamma(t)=\gamma_0(t)i\,dt\wedge d\overline{t}$ on $\C$ where $-\Ric \gamma \geq \gamma$ in the sense of currents.  But the Ahlfors-Schwarz lemma (see \cite{De97} Theorem 3.2) implies that $\gamma \equiv 0$, a contradiction.
\end{proof}

\subsection{Proof of Theorem \ref{THM:B}}
	The subsheaf $L$ determines a foliation $\F$ whose conormal bundle $N_\F^*$  has numerical dimension $\nu=1$.  According to the above discussion, it suffices to consider the case $\kappa=\kappa (N_\F^*)=-\infty$. From the preceding lemma, we can suppose that $f: \C \to X$ is tangent to $\F$. By 
	Theorem \ref{THM:D} (up to replacing $X$ by a smooth model) there exists a morphism $\Psi: X\to \mf H:=\D^N/\Gamma$ such that
    $\F=\Psi^*\G$ where $\G$ is one of the tautological  foliation on $X$. Therefore $\Psi(f): \C \to \mf H$ is tangent to $\G$ and is constant thanks to the hyperbolicity of the leaves on by $\mf H$ \cite{rousseautouzet15} Proposition 3.1 . This concludes the proof.
\qed

\section{Geometric specialness}\label{geospecialness}
In this section, we prove Theorem \ref{THM:C}. We will start by proving in our setting a particular case of Lang-Vojta's conjecture.

\begin{conj}[Lang-Vojta]
Let $X$ be a projective variety of general type and $L$ an ample line bundle. Then there is a proper algebraic subset $Z \not \subset X$ and a constant $\alpha$, such that for every smooth projective connected curve $C$ and every morphism $f:C\to X$ with $f(C) \not \subset Z$, one has
	  $$\deg f^*L \leq \alpha (2g(C)-2).$$
\end{conj}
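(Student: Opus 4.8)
The plan is to attack the inequality through \emph{symmetric differentials} and the tautological (Vojta--McQuillan) inequality, the standard mechanism that converts positivity of $\Omega^1_X$ into degree bounds for maps from curves. First I would fix an ample line bundle $A$ and an integer $c>0$ with $cA-L$ ample, so that $\deg f^*L\le c\,\deg f^*A$ for every $f:C\to X$; it then suffices to bound $\deg f^*A$. The heart of the argument would be the existence, for some $m>0$, of sufficiently many sections of $\Sym^m\Omega^1_X\otimes A^{-1}$ that their common base locus is a proper subvariety $Z\subsetneq X$. Granting this, for any $f$ with $f(C)\not\subset Z$ one selects a section $\omega\in H^0\big(X,\Sym^m\Omega^1_X\otimes A^{-1}\big)$ not vanishing identically along $f(C)$.

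Next I would pull $\omega$ back along the differential $df\colon f^*\Omega^1_X\to\Omega^1_C$: taking $m$-th symmetric powers gives a morphism $f^*\Sym^m\Omega^1_X\to\Sym^m\Omega^1_C$, whose target is a line bundle of degree $m(2g(C)-2)$, and applying it to $f^*\omega$ yields a section of a line bundle of degree $m(2g(C)-2)-\deg f^*A$. Provided this section is nonzero, its degree is non-negative, whence
\[
\deg f^*A\ \le\ m\big(2g(C)-2\big),\qquad\text{and so}\qquad \deg f^*L\ \le\ cm\big(2g(C)-2\big),
\]
the asserted inequality with $\alpha=cm$. To make the nonvanishing of $f^*\omega$ rigorous I would lift $f$ to the projectivized cotangent bundle $\Pj(\Omega^1_X)$ and invoke the tautological inequality, whose ramification term only improves the estimate; the locus where $f(C)$ is everywhere tangent to $\ker\omega$, together with the base locus of the system, is then absorbed into $Z$.

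The decisive step, and the reason the statement is recorded as a \emph{conjecture}, is the very first hypothesis: the production, for some $m$, of enough sections of $\Sym^m\Omega^1_X\otimes A^{-1}$ to leave only a proper base locus. For an arbitrary variety of general type this is precisely the content of the Green--Griffiths--Lang philosophy and is open in this generality; it is known only under extra positivity (for instance $\Omega^1_X$ ample, or $X$ a subvariety of an abelian variety). In the restricted setting relevant here the obstacle is bypassed: Theorem~\ref{THM:D} realizes $\F$ as the pull-back of a tautological foliation on a polydisc quotient, and the semipositive forms $\eta_j$ together with the bigness of $L=\sum_j L_j$ from Lemma~\ref{L:big} supply exactly the symmetric differentials the scheme demands, so that the mechanism above applies to the manifolds under consideration. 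I expect bridging from such explicit constructions to the fully general statement to be the essential obstacle, one that cannot be closed by the methods of this paper.
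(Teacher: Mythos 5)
You have correctly diagnosed the essential point: the statement you were asked to prove is recorded in the paper as the Lang--Vojta \emph{conjecture}, and the paper offers no proof of it --- it is open. What the paper actually establishes is the particular case, Proposition \ref{degreebound}, under the hypothesis that $X$ admits a generically finite morphism $\Psi : X \to \D^N/\Gamma$ to an irreducible polydisc quotient with $\Q$-effective divisorial part for a pulled-back tautological foliation. Your acknowledgment that producing enough sections of $\Sym^m\Omega^1_X\otimes A^{-1}$ with proper base locus is exactly the open Green--Griffiths--Lang-type input, and that the paper's methods do not close it, is accurate and matches the paper's own stance; any complete ``proof'' of the full statement would be wrong, and yours does not pretend to be one.

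Two remarks comparing your conditional sketch with the paper's proof of the special case. First, the paper never passes through high symmetric powers: it works directly with the $p$ conormal bundles $N^*_{\F_j}\subset \Omega^1_X$ of the pulled-back foliations, applies the algebraic tautological inequality to each foliation separately via the divisor $Z_i$ on $\Pj(T_X)$ linearly equivalent to $\mathcal{O}(1)+N_{\F_i}$, exploits the positivity $r_{i1}>0$ of the divisorial part together with nefness of $L_1$ to bound $\deg f^*H_i$, and concludes using the bigness of $L=\sum_j L_j$ from Lemma \ref{L:big}. Second --- and this is a genuine soft spot in your mechanism even granting the symmetric differentials --- your claim that ``the locus where $f(C)$ is everywhere tangent to $\ker\omega$ \ldots is then absorbed into $Z$'' is not legitimate in general: tangency is a condition on the lifted curve in $\Pj(T_X)$, and curves everywhere tangent to a given symmetric differential need not lie over a proper algebraic subset of $X$; this is precisely the gap between weak and strong degeneracy statements in this circle of problems. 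The paper sidesteps it by a structural input your sketch lacks: any algebraic curve not contracted by $\Psi$ cannot be contained in a leaf, by the hyperbolicity of the leaves of the tautological foliations (\cite{rousseautouzet15}), so non-tangency is automatic off the set $Z$ consisting of the positive-dimensional fibers of $\Psi$ and the hypersurfaces $H_i$, with no base-locus argument needed.
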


\

Here, we prove the following particular case.
\begin{prop}\label{degreebound}
	Let $\Gamma \subset \Aut(\D)^N$ be an irreducible lattice and
	let $X$ be a  complex projective  manifold.  If there exists a generically finite morphism $\Psi : X \to \mathbb D^N/\Gamma$
	 such that the pull-back of one of the tautological foliation (equipped with the pull-back transverse hyperbolic structure)
	  has a $\Q$-effective divisorial part, then there exists a big line bundle $L$ on $X$, a proper algebraic subset $Z \not \subset X$ and a constant $\alpha$, such that for every smooth projective connected curve $C$ and every morphism $f:C\to X$ with $f(C) \not \subset Z$, one has
	  $$\deg f^*L \leq \alpha (2g(C)-2).$$
\end{prop}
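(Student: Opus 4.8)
The plan is to reduce the bound on $\deg f^*L$ to a statement about orbifold morphisms into a variety of general type, and then invoke the proof of Theorem~\ref{T:firsthalf} together with the construction of a suitable big line bundle. First I would recall from the proof of Proposition~\ref{P:corenew} (and Lemma~\ref{L:big}) that under the hypotheses, the divisor $L=\sum_{j=1}^p L_j$ built from the nef parts of the conormal bundles of the pulled-back tautological foliations is \emph{big}, and moreover that there is a natural orbifold divisor $\Delta=\sum_i(1-\frac{1}{m_i})D_i$ on a smooth model $V$ of the image such that $K_V+\Delta$ is big (as noted in the final remark of Section~\ref{numspecial}, using \cite{CDG19}). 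The morphism $\Psi$, being generically finite, gives an orbifold morphism $\varphi\colon X\to (V,\Delta)$, so that $\varphi^*(K_V+\Delta)$ is a big line bundle on $X$ which I will take (after clearing denominators) as the line bundle $L$ in the statement.

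\textbf{Reducing to a height inequality on the orbifold base.}

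The key step is that any morphism $f\colon C\to X$ from a smooth projective curve composes with $\varphi$ to give a morphism $\varphi\circ f\colon C\to V$ whose image respects the orbifold structure $\Delta$ coming from the quotient singularities of $\mathbb D^N/\Gamma$. Because the tautological foliations are transversely hyperbolic and the monodromy around the $D_i$ has finite order $m_i$, the map $f$ necessarily ramifies over each $D_i$ to order at least $m_i$ (away from the exceptional locus), so $\varphi\circ f$ is an \emph{orbifold morphism} to $(V,\Delta)$ in Campana's sense. I would then pull back the transverse hyperbolic metric: the semi-positive forms $\eta_j$ representing $c_1(L_j)$ pull back under $f$ to metrics on $C$ of negative curvature bounded above by $-1$ in the orbifold sense, exactly as in Lemma~\ref{entirecurve}. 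An application of the Ahlfors–Schwarz lemma for orbifold curves (the algebraic, compact-curve analogue, i.e.\ a second-main-theorem/logarithmic-tautological-inequality type estimate) then yields a bound of the shape $\deg f^*\eta \le \alpha(2g(C)-2)$ for curves not contained in a fixed proper subset $Z$, where $Z$ absorbs the indeterminacy locus of $\Psi$, the support of the divisorial part, and the locus where the foliations fail to be in general position.

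\textbf{Identifying the exceptional set and the main obstacle.}

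The proper subset $Z\subsetneq X$ must be chosen to contain the leaves of $\F=\Psi^*\G$ along which the transverse hyperbolic metric degenerates or where $\Psi$ is not finite; the hypothesis that the divisorial part is $\Q$-effective guarantees this locus is a genuine divisor (plus lower-dimensional pieces) and not all of $X$. For $f(C)\not\subset Z$ the pulled-back metric is a nontrivial metric of curvature $\le -1$, and the compact-curve Ahlfors–Schwarz inequality converts this into the linear bound in $2g(C)-2$. The constant $\alpha$ will come from comparing $L=\varphi^*(K_V+\Delta)$ with the sum $\sum_j L_j$ via the big-ness established in Lemma~\ref{L:big}, together with the uniform comparison between the Poincaré curvature forms and the chosen ample/big class. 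The hard part, I expect, is the \emph{orbifold} refinement of the Ahlfors–Schwarz estimate: one must track the fractional ramification contributions of the $D_i$ carefully so that the singular metric $f^*\eta$ is genuinely of curvature bounded by $-1$ as an \emph{orbifold} metric on $C$, and then show the resulting degree bound is uniform in $f$ with a single proper subset $Z$ working for all curves simultaneously — the uniformity of both $Z$ and $\alpha$, rather than the pointwise estimate, is where the real content lies.
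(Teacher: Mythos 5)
Your overall architecture (a big line bundle built from the nef parts, an exceptional set $Z$ absorbing the positive--dimensional fibers of $\Psi$ and the supports of the divisorial parts, and a linear bound in $2g(C)-2$) matches the paper's, and your identification of Lemma~\ref{L:big} as the source of bigness is correct. But the central quantitative step is missing, and you say so yourself: you defer the degree bound to ``an application of the Ahlfors--Schwarz lemma for orbifold curves (the algebraic, compact-curve analogue)'' and then flag that orbifold refinement as ``the hard part.'' There is no off-the-shelf Ahlfors--Schwarz statement for \emph{compact} curves that you can cite here --- the usual lemma lives on $\C$ or the disk and yields vanishing (as in Lemma~\ref{entirecurve}), not a degree inequality --- so as written the proof has a hole exactly where the content is. The paper closes this gap with a purely algebraic device, no metrics and no orbifold morphisms: for a curve $f(C)$ not tangent to $\F_j$, the lift $f':C\to\mb P(T_X)$ and the divisor $Z_j\sim \mathcal O(1)+N_{\F_j}$ give
\[
\deg_C f^*N^*_{\F_j}\;\le\; \deg_C f'^*(Z_j)+\deg_C f^*N^*_{\F_j}\;=\;\deg_C f'^*\mathcal O(1)\;\le\; 2g(C)-2 ,
\]
since non-tangency forces $\deg_C f'^*(Z_j)\ge 0$. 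Feeding the decomposition $N^*_{\F_j}=L_j+\sum_i r_{ij}H_i$ into this, the nefness of $L_1$ and the hypothesis $r_{i1}>0$ give $\deg f^*H_i\le \frac{1}{r_{i1}}(2g(C)-2)$, and summing over $j$ bounds $\deg f^*L$ for $L=\sum_j L_j$ with an explicit $\alpha=p+\sum_{i,j}|r_{ij}|/r_{i1}$. If you want to keep your orbifold formulation via $\varphi^*(K_V+\Delta)$ and \cite{CDG19}, you must still prove this tautological inequality (or its metric avatar) yourself; reducing to ``Lang--Vojta for the orbifold base $(V,\Delta)$'' is circular, since that is precisely the open conjecture the proposition is a special case of.

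A second, smaller gap: you assert that for $f(C)\not\subset Z$ the pulled-back transverse metric is a nontrivial metric of curvature $\le -1$, i.e.\ that $f(C)$ is not tangent to the foliations. This is not automatic and is exactly where the paper invokes \cite{rousseautouzet15}: the leaves of the tautological foliations on $\D^N/\Gamma$ are hyperbolic, so they contain no algebraic curves, whence any $f(C)$ with $\Psi(f(C))$ non-constant meets each $\F_j$ transversely somewhere. Without this input, the inequality $\deg_C f'^*(Z_j)\ge 0$ (or the nontriviality of $f^*\eta_j$ in your metric language) fails, and the whole bound collapses. You should state and cite this hyperbolicity of leaves explicitly when you define $Z$.
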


\begin{proof}
Using the same notations as in Section \ref*{S:tautological}, we consider the decomposition
	\[
	   N_{ \F_j }^*=L_j + D_j= L_j+ \sum_{i=1}^k r_{ij} H_i.
	\]
	
The morphism $f: C \to X$ induces a morphism $f': C \to \mb P(T_X)$ which implies the algebraic tautological inequality
$$\deg_C(f'^*(\mathcal{O}(1)) \leq 2g(C)-2.$$
From \cite{rousseautouzet15}, we know that if $\Psi(f(C))$ is not constant then $f(C)$ is not contained in a leaf of any foliation $\F_j$. We take $Z$ to be the union of the positive dimensional fibers of $\Psi$ and the $H_i$ and assume that $f(C)$ is not contained in $Z$.

To the foliation $\mathcal{F}_i$ is associated a divisor $Z_i \subset \mb P(T_X),$ linearly equivalent to $\mathcal{O}(1)+{N}_{\mathcal{F}_i}.$ Then the algebraic tautological inequality gives
$$\deg_C(f^*({N}^*_{\mathcal{F}_i})) \leq \deg_C(f'^*(Z_i))+ \deg_C(f^*({N}^*_{\mathcal{F}_i})) \leq 2g(C)-2.$$
The first inequality comes from the non-tangency of the algebraic curve with the foliation which implies $0 \leq \deg_C(f'^*(Z_i))$.
By assumption on the divisorial part of $\F_1$,  we have that $r_{i1}>0$ for every
	$i \in \{ 1, \ldots, k'\}$. Therefore
	$$\deg f^*L_1+	\sum_{i=1}^{k'} r_{i1} \deg f^*H_i \leq 2g(C)-2.$$
Since $L_1$ is nef, we obtain $\deg f^*H_i \leq \frac{1}{r_{i1}}(2g(C)-2)$ for all $i \in \{ 1, \ldots, k'\}$.
Let $L:=\sum_j L_j$, then $L$ is big according to Lemma \ref{L:big} and the previous inequalities give
$$\deg f^*L\leq \sum_j \deg f^* N_{ \F_j }^*-\sum_j \sum_{i=1}^{k'} r_{ij} \deg f^* H_i \leq (2g(C)-2)(p+\sum_j \sum_{i=1}^{k'}  \frac{|r_{ij}|} {r_{i1}}). $$

\end{proof}

\begin{cor}\label{geospecial}
Under the same assumptions, $X$ is not geometrically special.
\end{cor}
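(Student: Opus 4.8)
The plan is to derive Corollary \ref{geospecial} directly from the degree bound of Proposition \ref{degreebound} by contradiction with the definition of geometric specialness. First I would observe that the hypotheses of the corollary are exactly those of the preceding proposition, so we have at our disposal a big line bundle $L$ on $X$, a proper algebraic subset $Z \subsetneq X$, and a constant $\alpha$ such that $\deg f^*L \le \alpha(2g(C)-2)$ for every smooth projective curve $C$ and every morphism $f : C \to X$ with $f(C) \not\subset Z$. The key point is that geometric specialness would force the existence of a \emph{fixed} source curve $C$ together with infinitely many maps $f_i : C \to X$ whose graphs are Zariski dense in $C \times X$; in particular the genus $g(C)$, and hence the right-hand side $\alpha(2g(C)-2)$, is a constant independent of $i$, while the covering condition should force $\deg f_i^* L$ to be unbounded.

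The steps I would carry out are as follows. Suppose for contradiction that $X$ is geometrically special. Applying the definition to the dense open subset $U := X \setminus Z$, we obtain a fixed smooth projective connected curve $C$, a point $c \in C$, a point $u \in U$, and morphisms $f_i : C \to X$ with $f_i(c) = u$ such that $\overline{\bigcup_i \Gamma_{f_i}} = C \times X$. Since each $f_i$ passes through $u \in U = X \setminus Z$, we have $f_i(C) \not\subset Z$, so each $f_i$ satisfies the hypothesis of Proposition \ref{degreebound} and therefore $\deg f_i^* L \le \alpha(2g(C)-2) =: B$, a bound uniform in $i$. Now I would argue that $L$ being big forces the degrees $\deg f_i^* L$ to be unbounded: because the graphs cover $C \times X$ and $L$ is big, the images $f_i(C)$ cannot all lie in a bounded family of curves of bounded $L$-degree. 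Concretely, boundedness of $\deg f_i^* L$ together with fixed genus and fixed marked point $f_i(c)=u$ places the $f_i$ in a bounded family (a finite union of components of a Hom scheme, using that $L$ is big so some multiple embeds $X$ birationally onto its image and bounded degree curves through a fixed point form a bounded family), whose total image in $C \times X$ is a proper closed subset, contradicting Zariski density of the graphs.

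The main obstacle I anticipate is making rigorous the passage ``bounded $L$-degree $+$ fixed genus $+$ marked point $\Rightarrow$ bounded family''. For this I would replace $L$ by a large multiple so that it is very ample on a birational model (legitimate since $L$ is big), reducing to bounding curves of bounded degree in projective space passing through a fixed point; such curves form finitely many components of the Hilbert (or Hom) scheme, so the union of their graphs is a proper closed subvariety of $C \times X$, which cannot equal $C \times X$. Alternatively, and perhaps more cleanly, one can phrase the covering hypothesis as producing a sequence whose graphs are dense, extract that for a dense set of fibers $\{t\} \times X$ the points $f_i(t)$ become Zariski dense in $X$, and note that this forces infinitely many genuinely distinct $f_i(C)$; combined with bigness of $L$ one gets $\deg f_i^* L \to \infty$, directly contradicting the uniform bound $B$. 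Either route yields the contradiction, establishing that $X$ is not geometrically special.

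\begin{proof}
Assume for contradiction that $X$ is geometrically special. By Proposition \ref{degreebound} there exist a big line bundle $L$ on $X$, a proper algebraic subset $Z \subsetneq X$, and a constant $\alpha$ such that every morphism $f : C \to X$ from a smooth projective connected curve with $f(C) \not\subset Z$ satisfies $\deg f^* L \le \alpha(2g(C)-2)$. Applying geometric specialness to the dense open set $U = X \setminus Z$, we obtain a fixed smooth projective connected curve $C$, points $c \in C$ and $u \in U$, and morphisms $f_i : C \to X$ with $f_i(c) = u$ whose graphs satisfy $\overline{\bigcup_i \Gamma_{f_i}} = C \times X$. Since $f_i(c) = u \notin Z$, each $f_i(C)\not\subset Z$, so $\deg f_i^* L \le \alpha(2g(C)-2) =: B$ uniformly in $i$. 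Replacing $L$ by a suitable multiple, we may assume $L$ is very ample on a birational model of $X$; then the morphisms $f_i$, having bounded $L$-degree, fixed genus $g(C)$, and fixed value $f_i(c) = u$, lie in a bounded family, i.e.\ in finitely many components of the relevant Hom scheme. Hence $\bigcup_i \Gamma_{f_i}$ is contained in a proper closed subvariety of $C \times X$, contradicting the density of the graphs. Therefore $X$ is not geometrically special.
\end{proof}
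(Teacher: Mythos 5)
Your reduction to Proposition \ref{degreebound} and the setup (choosing $U = X\setminus Z$, noting $f_i(C)\not\subset Z$ because $f_i(c)=u\notin Z$, hence the uniform bound $\deg f_i^*L\le \alpha(2g(C)-2)$) match the paper exactly. The proof breaks down at the final step. From ``the $f_i$ lie in finitely many components of the pointed Hom scheme'' you conclude that $\bigcup_i\Gamma_{f_i}$ sits in a proper closed subvariety of $C\times X$. That implication is false: a bounded family of pointed maps can perfectly well have graphs that are Zariski dense in $C\times X$. For instance, the degree-one maps $\mathbb P^1\to\mathbb P^n$ sending $0$ to a fixed point $u$ form a single irreducible component of the Hom scheme of dimension $n+1$, and the evaluation map from (that component)$\times\,\mathbb P^1$ dominates $\mathbb P^1\times\mathbb P^n$; a countable Zariski-dense choice of such maps gives graphs with dense union. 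Finiteness of the number of components gives you no control on their dimensions, so no contradiction with density follows. (There is also a secondary issue: boundedness of $\deg f_i^*L$ for $L$ merely big does not by itself bound the family --- passing to a model where a multiple of $L$ is very ample does not control curves meeting the exceptional locus. The standard fix is to write $mL=A+E$ with $A$ ample and $E$ effective and to enlarge $Z$ by $\Supp E$, which is legitimate here since one may shrink $U$; but this is minor compared to the main gap.)

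What is actually needed --- and what the paper does --- is to exploit the \emph{positive-dimensionality} of the family rather than its boundedness. Since infinitely many distinct $f_i$ of bounded degree through the fixed point $u$ lie in finitely many components of $\Hom(C,X;c\mapsto u)$, some component is positive-dimensional, and Mori's Bend-and-Break then produces a rational curve through $u$. By Lemma \ref{entirecurve} (applied to the entire curve obtained by restricting $\mathbb P^1\to X$ to $\C$) that rational curve is tangent to $\F_1$, hence lies in a leaf; the leaves are hyperbolic (pullbacks of disks under $\Psi$), so the rational curve is contracted by $\Psi$ and therefore contained in $Z$, contradicting $u\notin Z$. You should replace your last two sentences by this Bend-and-Break argument; the rest of your write-up can stand.
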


\begin{proof}
Suppose $X$ is geometrically special.
Consider the Zariski open set $U:=X \setminus Z$. Then there exists a smooth projective connected curve $C$, a point $c$ in $C$, a point $u$ in $U$, and a sequence of  morphisms $f_i:C\to X$   with $f_i(c) = u$ for $i=1,2,\ldots$ such that $C\times X $ is covered by the graphs $\Gamma_{f_i}\subset C\times X$ of these maps. From the previous Proposition \ref{degreebound}, all these pointed maps have bounded degree. Therefore by Bend-and-Break, we obtain a rational curve passing through $u$. Such a curve has to be tangent to the foliation $\mathcal{F}_1$ by Lemma \ref{entirecurve}. This gives a contradiction.
\end{proof}

\subsection{Proof of Theorem \ref{THM:C}}

As in the proof of \ref{THM:B}, consider the foliation $\F$ associated to $L$. In the case  $\nu=1=\kappa$, we have a Bogomolov sheaf $L\subset \Omega_X$ which corresponds (see \cite{Ca04}) to a fibration of general type $F: X \to D$ onto a curve. This means that the orbifold base of the fibration $(D, \Delta)$ is of general type. This implies finiteness of orbifold morphisms $f: C \to (D, \Delta)$ (Theorem 3.8 \cite{Ca05}) and therefore $X$ cannot be geometrically special in this case.

In the non-abundant case $\nu=1$, $\kappa=-\infty$, there exists a morphism $\Psi: X\to \mf H:=\D^N/\Gamma$ such that $\F=\Psi^*\G$ where $\G$ is one of the tautological  foliation on $X$. Consider $V$ a smooth projective model of the image of $\Psi$ given by some generically finite map $V \to \mathbb D^N/\Gamma$. Let $\phi: X \to V$ be the induced dominant morphism (modulo some blow-ups on $X$).

Let $f: C \to X$ be a morphism. Following the same proof as in Proposition \ref{degreebound}, we obtain $ \deg (\phi\circ f)^*L'= \deg f^*L \leq \alpha (2g-2)$.
Since $L'$ is a big line bundle on $V$, the same proof as in Corollary \ref{geospecial} applied to the sequence $\phi \circ f_i$ gives that $X$ is not geometrically special.
\qed

\begin{rem}
In \cite{JR}, it is proved that if there exists a Zariski-dense representation $\rho: \pi_1(X) \to G(\C)$ ($G$ an almost simple algebraic group), then $X$ is not geometrically special. We cannot apply this result here since the monodromy representation $\rho: \pi_1(X_0) \to \Aut(\D)$ is \textit{a priori} defined only on $X_0:= X \setminus \Supp N$ where $N$ is the negative part of $c_1(N_\F ^*)$.
\end{rem}

\section{Higher codimensions}\label{highcodim}

As recalled in the introductive part, the  existence of a rank one coherent subsheaf $L$ of $\Omega_X^p$ having \emph{numerical} dimension $p$  and such that $\mathrm{codim}\ (\mathrm{Ker}\ L)=p$  on a compact K\" ahler  manifold $X$  can be translated into the existence of a codimension $p$ foliation  $\F$ on $X$ such that $\mathrm{det}\ N_\F^*$ (which is somehow the canonical sheaf of the "space of leaves" $X/\F$) has numerical dimension $p$. We do not know how to generalize Theorem \ref{THM:A} to  codimension $p>1$, in particular because we do not have at our disposal sufficiently precise structure results for this category of foliations, unlike in the case $p=1$. However, it remains possible to reach the same conclusion under strong assumptions on the subsheaf of $\Omega^p_X$. For instance, as stated in Theorem \ref{THM:E} in the Introduction, this is the case if we assume that the subsheaf of $\Omega^p_X$ defines a smooth foliation with conormal bundle having Chern class represented by a smooth $(1,1)$-form with semi-positive curvature of constant rank  $p$.

\begin{proof}[ Proof of Theorem E]
 We first assert that $\eta$ is basic for $\F$. More generally, one has the more general phenomenon, as proved by Demailly \cite{ MR1922099}: if $X$ is compact K\"ahler $\omega\in H^0(X, \Omega^p \otimes L)$ such that $L$ is a line bundle whose dual $L^*$ is pseudo-effective, then $\Theta\wedge \omega=0$ where $\Theta$ is any closed positive current representing $c_1(L^*)$.

 In our situation this implies that $\eta$ defines a holonomy transverse invariant K\" ahler metric for $\F$. Equivalently, the kernel of $\eta$ is exactly the tangent bundle to $\F$.  One thus inherits  another real basic $(1,1)$-form, namely the transverse Ricci form $r=-\mathrm{Ricci}(\eta)$. In some holomorphic coordinates $(z_1,\ldots,z_p)$ parameterizing the local space of leaves, it reads as $$\eta= {\sqrt{-1}} \sum_{i,j}g_{ij}dz_i \wedge d\bar{z_j}$$
where $g_{ij}$ depend only of the transverse variables $(z_1,\ldots,z_p)$
and $$r= -\frac{\sqrt{ -1}}{\pi}\partial\bar{\partial}\log \left(\frac{\eta^p}{{|dz_1\wedge\cdots \wedge dz_p|}^2}\right).$$
Note that $-r$ also represents $c_1(N_\F^*)$, so that there exists, by the $dd^c$ Lemma, a smooth function $f:X\to \R$ such that $-r=\eta+dd^c f$. Note that $dd^c f$ is basic as it is a sum of two basic forms.  This implies that $f$ is pluriharmonic along the leaves of $\F$. It turns out that $f$ is basic: indeed, let $\mathcal L$ be a leaf and $\overline{\mathcal L}$ its topological closure. Let $x\in\overline{\mathcal L}$ such that $\restr{f}{\overline{\mathcal L}}$ reaches its maximum at $x$ and let ${\mathcal L}_x$ be the leaf passing through $x$. By the maximum principle for pluriharmonic functions $f$ is constant on ${\mathcal L}_x$, hence on $ \overline{ {\mathcal L}_x}$. On the other hand, the leaves closure form a partition of $X$,  a common feature for Riemannian foliations \cite[Theorem 5.1]{ MR932463}. In particular, $\overline{\mathcal L}=
\overline{ {\mathcal L}_x}$. As the original leaf $\mathcal L$ has been chosen arbitrarily,  this enables to conclude that $f$ is leafwise constant, as wanted. Then, $r$ and $\eta$ are not only cohomologous in the ordinary $\partial\bar{\partial}$ cohomology, but also in the \textit{basic} $\partial\bar{\partial}$ cohomology. By the foliated version of Yau's solution to Calabi conjecture, due to El Kacimi  \cite[Section 3.5]{MR1042454}, there exists for $\F$ an invariant transverse K\"ahler metric whose Ricci form is equal to $-\eta$.

If the leaves of $\F$ are closed,  the space of leaves $X/\F$ is naturally equipped with a structure of a K\"ahler orbifold (in the usual sense). The $(1,1)$-form $\eta$ descends on  $X/\F$ as a positive $(1,1)$-form representing the Chern class of the (orbifold) canonical bundle $K_{X/\F}$.  The orbifold analogue of Kodaira embedding theorem \cite[Section 7]{Baily57} enables to exhibit a Bogomolov sheaf on $X$, thus proving that $X$ is not special.

Otherwise, if the leaves of $\F$ are not closed,  the strategy consists in producing  a representation $\pi_1(X)\to G$ with dense image, where $G$ is a real semi-simple algebraic group  and apply Zuo's Theorem \cite{MR1384908} or \cite[Theorem 1]{MR3314830} to conclude: if there exists such a representation, up to replacing $X$ by a finite étale cover $\tilde X$, there exists a meromorphic fibration $f: \tilde X\to V$ where $V$ is a projective manifold of general type (through which the representation factorizes).

We now explain how to produce such a representation.
Under our assumptions,  $\F$ is a transversely K\" ahler foliation $\F$ whose leaves are not closed and whose transverse Ricci curvature is negative, that is $\eta$ is semi-negative with constant rank equal to $p$ the complex codimension of $\F$.  This corresponds to the monodromy of the so called commuting sheaf $\mathcal C$. This sheaf is a locally constant sheaf of  Lie algebras $\mathfrak g$ of basic Killing  vector fields which encodes the dynamic of $\F$ and defined in the general setting of Riemannian foliations (see   \cite[Section 5.3]{ MR932463}). Under our negativity assumption, one can prove \cite[Théor\`eme 1.1]{MR2790821}, that $\mathfrak g$ is semi-simple.  This implies that the image of representation $\alpha:\pi_1(X)\to \mathrm{Aut}(\mathfrak g)$ (={ real semi-simple algebraic group}) intersects  $\mathrm{Aut}^0(\mathfrak g)$ as a dense subgroup. Indeed, the closure of the image of $\alpha$ contains the image $\mathrm{Ad}\ g$ of the exponential of the adjoint representation of $\mathfrak g$ according to \cite[E. Salem Appendix, Proposition 3.7]{MR932463}. We can thus apply Zuo's Theorem to produce the sought fibration $\tilde X\mapsto V$, thus proving that $X$ is not special.
\end{proof}
\begin{rem}
In \cite{Mok2000}, Mok considered compact K\" ahler manifolds $X$ equipped with a $d$-closed holomorphic one form twisted by a locally constant bundle of Hilbert spaces $E_\Phi$. This defines on $X$ a foliation which is transversely Riemannian on a dense open subset of $X$ and whose transverse metric (semi-K\"ahler structure in the language of \textit{loc.cit}) is cooked up from an orthonormal basis on the typical fiber of $E_\Phi$ and whose "Ricci curvature" carries some negativity properties. Under some circumstances, Mok shows the existence of fibration of (an \'etale cover of) $X$ onto varieties of general type. It could be tempting to relate the existence of a numerical Bogomolov sheaf to the existence of a semi-K\" ahler structure arising from twisted forms. For instance, in codimension one, transversely hyperbolic foliations can be regarded  as foliations defined by the kernel of a $E_\Phi$ valued closed holomorphic one form where $E_\Phi\to X$ is the locally constant bundle of Hilbert spaces arising from a unitary representation $\pi_1(X) \to U(H)$, where $H$ is the space of square integrable antiholomorphic forms on the disk  (\cite[Section 4]{Mok97}).
\end{rem}

\bibliography{references}

\begin{thebibliography}{LPRT20}

\bibitem[Bai57]{Baily57}
W.~L. Baily.
\newblock On the imbedding of {$V$}-manifolds in projective space.
\newblock {\em Amer. J. Math.}, 79:403--430, 1957.

\bibitem[Bou02]{Boucksom2002}
S\'{e}bastien Boucksom.
\newblock On the volume of a line bundle.
\newblock {\em Internat. J. Math.}, 13(10):1043--1063, 2002.

\bibitem[Cam04]{Ca04}
Fr\'{e}d\'{e}ric Campana.
\newblock Orbifolds, special varieties and classification theory.
\newblock {\em Ann. Inst. Fourier (Grenoble)}, 54(3):499--630, 2004.

\bibitem[Cam05]{Ca05}
Fr\'{e}d\'{e}ric Campana.
\newblock Fibres multiples sur les surfaces: aspects geom\'{e}triques,
  hyperboliques et arithm\'{e}tiques.
\newblock {\em Manuscripta Math.}, 117(4):429--461, 2005.

\bibitem[Cam20]{Ca20}
Fr\'ed\'eric Campana.
\newblock Arithmetic aspects of orbifold pairs.
\newblock {\em CRM Short Courses, Arithmetic geometry of logarithmic pairs and
  hyperbolicity of moduli spaces}, 2020.

\bibitem[CCE15]{MR3314830}
Fr\'{e}deric Campana, Beno\^{\i}t Claudon, and Philippe Eyssidieux.
\newblock Repr\'{e}sentations lin\'{e}aires des groupes k\"{a}hl\'{e}riens:
  factorisations et conjecture de {S}hafarevich lin\'{e}aire.
\newblock {\em Compos. Math.}, 151(2):351--376, 2015.

\bibitem[CDG20]{CDG19}
Beno\^it Cadorel, Simone Diverio, and Henri Guenancia.
\newblock On subvarieties of singular quotients of bounded domains.
\newblock {\em arXiv preprint arXiv:1905.04212}, 2020.

\bibitem[Cor88]{Corlette88}
Kevin Corlette.
\newblock Flat {$G$}-bundles with canonical metrics.
\newblock {\em J. Differential Geom.}, 28(3):361--382, 1988.

\bibitem[Dem92]{MR1158622}
Jean-Pierre Demailly.
\newblock Regularization of closed positive currents and intersection theory.
\newblock {\em J. Algebraic Geom.}, 1(3):361--409, 1992.

\bibitem[Dem97]{De97}
Jean-Pierre Demailly.
\newblock Algebraic criteria for {K}obayashi hyperbolic projective varieties
  and jet differentials.
\newblock In {\em Algebraic geometry---{S}anta {C}ruz 1995}, volume~62 of {\em
  Proc. Sympos. Pure Math.}, pages 285--360. Amer. Math. Soc., Providence, RI,
  1997.

\bibitem[Dem02]{MR1922099}
Jean-Pierre Demailly.
\newblock On the {F}robenius integrability of certain holomorphic {$p$}-forms.
\newblock In {\em Complex geometry ({G}\"{o}ttingen, 2000)}, pages 93--98.
  Springer, Berlin, 2002.

\bibitem[EKA90]{MR1042454}
Aziz El~Kacimi-Alaoui.
\newblock Op\'{e}rateurs transversalement elliptiques sur un feuilletage
  riemannien et applications.
\newblock {\em Compositio Math.}, 73(1):57--106, 1990.

\bibitem[Gra02]{Gra02}
Hakan Granath.
\newblock {\em On quaternionic {S}himura surfaces}.
\newblock ProQuest LLC, Ann Arbor, MI, 2002.
\newblock Thesis (Ph.D.)--Chalmers Tekniska Hogskola (Sweden).

\bibitem[Hir77]{Hironaka77}
Heisuke Hironaka.
\newblock Bimeromorphic smoothing of a complex-analytic space.
\newblock {\em Acta Math. Vietnam.}, 2(2):103--168, 1977.

\bibitem[JR20]{JR}
Ariyan Javanpeykar and Erwan Rousseau.
\newblock Albanese maps and fundamental groups of varieties with many rational
  points over function fields.
\newblock {\em arXiv preprint arXiv:2010.02913}, 2020.

\bibitem[Lab91]{Labourie91}
Fran\c{c}ois Labourie.
\newblock Existence d'applications harmoniques tordues \`a valeurs dans les
  vari\'{e}t\'{e}s \`a courbure n\'{e}gative.
\newblock {\em Proc. Amer. Math. Soc.}, 111(3):877--882, 1991.

\bibitem[LPRT20]{BPRT}
Federico {Lo Bianco}, Jorge {Pereira}, Erwan {Rousseau}, and Fr{\'e}d{\'e}ric
  {Touzet}.
\newblock {Rational Endomorphisms of Codimension One Holomorphic Foliations}.
\newblock {\em arXiv e-prints}, page arXiv:2007.12541, July 2020.

\bibitem[Mok97]{Mok97}
Ngaiming Mok.
\newblock The generalized theorem of {C}astelnuovo-de {F}ranchis for unitary
  representations.
\newblock In {\em Geometry from the {P}acific {R}im ({S}ingapore, 1994)}, pages
  261--284. de Gruyter, Berlin, 1997.

\bibitem[Mok00]{Mok2000}
Ngaiming Mok.
\newblock Fibrations of compact k\"ahler manifolds in terms of cohomological
  properties of their fundamental groups.
\newblock {\em Annales de l'Institut Fourier}, 50(2):633--675, 2000.

\bibitem[Mol88]{MR932463}
Pierre Molino.
\newblock {\em Riemannian foliations}, volume~73 of {\em Progress in
  Mathematics}.
\newblock Birkh\"auser Boston, Inc., Boston, MA, 1988.
\newblock Translated from the French by Grant Cairns, With appendices by
  Cairns, Y. Carri\`ere, \'E. Ghys, E. Salem and V. Sergiescu.

\bibitem[Nev70]{Nev70}
Rolf Nevanlinna.
\newblock {\em Analytic functions}.
\newblock Translated from the second German edition by Phillip Emig. Die
  Grundlehren der mathematischen Wissenschaften, Band 162. Springer-Verlag, New
  York-Berlin, 1970.

\bibitem[RT18]{rousseautouzet15}
Erwan Rousseau and Fr{\'e}d{\'e}ric Touzet.
\newblock Curves in {H}ilbert modular varieties.
\newblock {\em Asian J. Math.}, 22(4):673--690, 2018.

\bibitem[{Shi}63]{Shimizu}
Hideo {Shimizu}.
\newblock {On discontinuous groups operating on the product of the upper half
  planes}.
\newblock {\em {Ann. Math. (2)}}, 77:33--71, 1963.

\bibitem[Tou10]{MR2790821}
Fr\'{e}d\'{e}ric Touzet.
\newblock Structure des feuilletages k\"{a}hleriens en courbure
  semi-n\'{e}gative.
\newblock {\em Ann. Fac. Sci. Toulouse Math. (6)}, 19(3-4):865--886, 2010.

\bibitem[Tou13]{MR3124741}
Fr\'ed\'eric Touzet.
\newblock Uniformisation de l'espace des feuilles de certains feuilletages de
  codimension un.
\newblock {\em Bull. Braz. Math. Soc. (N.S.)}, 44(3):351--391, 2013.

\bibitem[Tou16]{MR3644247}
Fr\'ed\'eric Touzet.
\newblock On the structure of codimension 1 foliations with pseudoeffective
  conormal bundle.
\newblock In {\em Foliation theory in algebraic geometry}, Simons Symp., pages
  157--216. Springer, Cham, 2016.

\bibitem[V\^96]{Vajaitu96}
Viorel V\^{a}j\^{a}itu.
\newblock K\"{a}hlerianity of {$q$}-{S}tein spaces.
\newblock {\em Arch. Math. (Basel)}, 66(3):250--257, 1996.

\bibitem[Wu20]{Wu20}
Xiaojun Wu.
\newblock On a vanishing theorem due to {B}ogomolov.
\newblock {\em arXiv preprint arXiv:2011.13751}, 2020.

\bibitem[Zuo96]{MR1384908}
Kang Zuo.
\newblock Kodaira dimension and {C}hern hyperbolicity of the {S}hafarevich maps
  for representations of {$\pi_1$} of compact {K}\"ahler manifolds.
\newblock {\em J. Reine Angew. Math.}, 472:139--156, 1996.

\end{thebibliography}
\bibliographystyle{alpha}

\end{document}